\documentclass[11pt]{article}%
\usepackage{amsmath, upgreek}
\usepackage{amssymb}
\usepackage{amscd}
\usepackage{xspace}
\usepackage{verbatim}
\setcounter{MaxMatrixCols}{30}



\newcommand{\be}{\begin{equation}}
\newcommand{\bel}[1]{\begin{equation}\label{#1}}
\newcommand{\ee}{\end{equation}}
\newcommand{\barr}{\begin{eqnarray}}
\newcommand{\earr}{\end{eqnarray}}
\newcommand{\bars}{\begin{eqnarray*}}
\newcommand{\ears}{\end{eqnarray*}}

\newtheorem{subn}{\name}

\newcommand{\bsn}[1]{\def\name{#1}\begin{subn}}
\newcommand{\esn}{\end{subn}}
\newtheorem{sub}{\name}[section]
\newcommand{\bs}{\begin{sub}}
\newcommand{\es}{\end{sub}}
\newcommand{\bsl}[1]{\begin{sub}\label{#1}}
\newcommand{\bth}[1]{\def\name{Theorem}
\begin{sub}\label{t:#1}}
\newcommand{\blemma}[1]{\def\name{Lemma}
\begin{sub}\label{l:#1}}
\newcommand{\bcor}[1]{\def\name{Corollary}
\begin{sub}\label{c:#1}}
\newcommand{\bdef}[1]{\def\name{Definition}
\begin{sub}\label{d:#1}}
\newcommand{\bprop}[1]{\def\name{Proposition}
\begin{sub}\label{p:#1}}

\newcommand{\BA}{\begin{array}}
\newcommand{\EA}{\end{array}}
\newcommand{\BAN}{\renewcommand{\arraystretch}{1.2}
\setlength{\arraycolsep}{2pt}\begin{array}}
\newcommand{\BAV}[2]{\renewcommand{\arraystretch}{#1}
\setlength{\arraycolsep}{#2}\begin{array}}
\newcommand{\BSA}{\begin{subarray}}
\newcommand{\ESA}{\end{subarray}}
\newcommand{\BAL}{\begin{aligned}}
\newcommand{\EAL}{\end{aligned}}
\newcommand{\BALG}{\begin{alignat}}
\newcommand{\EALG}{\end{alignat}}
\newcommand{\BALGN}{\begin{alignat*}}
\newcommand{\EALGN}{\end{alignat*}}




\newcommand{\abs}[1]{\left |#1\right |}


\def\angb<#1>{\langle #1 \rangle}

\newcommand{\opname}[1]{\mbox{\rm #1}\,}
\newcommand{\supp}{\opname{supp}}
\newcommand{\dist}{\opname{dist}}
\newcommand{\myfrac}[2]{{\displaystyle \frac{#1}{#2} }}
\newcommand{\myint}[2]{{\displaystyle \int_{#1}^{#2}}}



\newcommand{\prt}{\partial}


\def\ga{\alpha}     \def\gb{\beta}

            \def\gl{\lambda}
\def\gm{\mu}

      \def\gw{\omega}
                
     \def\Gd{\Delta}

\def\Gw{\Omega}              


      \def\CF{{\mathcal F}}


   \def\BBN {\mathbb N}    
   \def\BBR {\mathbb R}




\providecommand{\U}[1]{\protect\rule{.1in}{.1in}}
\setlength{\oddsidemargin}{-0.05in}
\setlength{\evensidemargin}{-0.05in}
\setlength{\textwidth}{6.5in}
\newtheorem{theorem}{Theorem}[section]

\textheight=7.5in

\newtheorem{definition}[theorem]{Definition}

\newtheorem{remark}[theorem]{Remark}

\newenvironment{proof}[1][Proof]{\textbf{#1.} }{\hfill\rule{0.5em}{0.5em}}
{\catcode`\@=11\global\let\AddToReset=\@addtoreset
\AddToReset{equation}{section}

\AddToReset{theorem}{section}

\begin{document}
\title{Quasilinear and Hessian type equations with\\  exponential reaction and measure data \footnote{To appear {\bf Arch. Rat. Mech. Anal.}}}
\author{
 {\bf Nguyen Quoc Hung\thanks{ E-mail address: Hung.Nguyen-Quoc@lmpt.univ-tours.fr}}\\[1mm]
 {\bf Laurent V\'eron\thanks{ E-mail address: Laurent.Veron@lmpt.univ-tours.fr, Corresponding author.}}\\[2mm]
{\small Laboratoire de Math\'ematiques et Physique Th\'eorique, }\\
{\small  Universit\'e Fran\c{c}ois Rabelais,  Tours,  FRANCE}}
\date{}  
\maketitle 
\tableofcontents
\abstract{ We prove existence results concerning equations of the type $-\Gd_pu=P(u)+\gm$ for $p>1$ and  $F_k[-u]=P(u)+\gm$ with $1\leq k<\frac{N}{2}$ in a bounded domain $\Omega$ or the whole $\mathbb{R}^N$, where $\gm$ is a positive Radon measure and $P(u)\sim e^{au^\beta}$ with $a>0$ and $\beta\geq 1$. Sufficient conditions for existence are expressed in terms of the  fractional maximal potential of $\gm$. Two-sided estimates on the solutions are obtained in terms of some precise Wolff potentials of $\gm$. Necessary conditions are obtained in terms of Orlicz capacities. We also establish existence results for a general Wolff potential equation under the form $u={\bf W}_{\alpha,p}^R[P(u)]+f$ in $\mathbb{R}^N$, where $0<R\leq \infty$ and $f$ is a positive integrable function. } \\
 
 \noindent {\it \footnotesize 2010 Mathematics Subject Classification}. {\scriptsize
31C15, 32 F10, 35J92, 35R06, 46E30}.\smallskip

 \noindent {\it \footnotesize Key words:} {\scriptsize quasilinear elliptic equations, Hessian equations, Wolff potential, maximal functions, Borel measures, Orlicz capacities.
}
\vspace{1mm}
\hspace{.05in}
 \section{Introduction} 
 
 Let $\Omega\subset\BBR^N$ be either a bounded domain or the whole $\mathbb{R}^N$, $p>1$ and $k\in\{1,2,...,N\}$. We denote by 
 $$\Gd_pu:=div\left(\abs{\nabla u}^{p-2} \nabla u\right)$$ 
 the p-Laplace operator and by 
 $$ F_k[u]=\sum_{1\leq j_1<j_2<...<j_k\leq N}\lambda_{ j_1}\lambda_{ j_2}...\lambda_{ j_k}$$
 the k-Hessian operator where $\gl_1,...,\gl_N$ are the eigenvalues of the Hessian matrix $D^2u$.
Let  $\mu$ be a positive Radon measure in $\Gw$; our aim is to study the existence of nonnegative solutions to the following boundary value problems if $\Gw$ is bounded, 
 \bel{EQ1}\BA {ll}
 -\Gd_pu=P(u)+\gm\qquad&\text{in }\Gw,\\
 \phantom{ -\Gd_p}
 u=0\qquad&\text{on }\prt\Gw,
\EA \ee
and 
 \bel{EQ2}\BA {ll}
 F_k[-u]=P(u)+\gm\qquad&\text{in }\Gw,\\
 \phantom{  F_k[-]}
 u= \varphi\qquad&\text{on }\prt\Gw,
\EA \ee
 where $P$ is an exponential function. If $\Gw=\BBR^N$, we consider the same equations, but the boundary conditions are replaced by $\inf_{\BBR^N}u=0$. When $P(r)=r^q$ with $q>p-1$,  Phuc and Verbitsky published a seminal article \cite {PhVe} on the solvability of the corresponding problem (\ref{EQ1}). They obtained necessary and sufficient conditions  involving Bessel capacities or  Wolff potentials. 
For example, assuming that $\Omega$ is bounded, they proved that if $\mu$ has compact support in $\Omega$ it is equivalent to solve  
(\ref{EQ1}) with $P(r)= r^q$, or to have
\bel{EQ3}\BA {ll}
\mu(E)\leq c\text{Cap}_{\mathbf{G}_p,\frac{q}{q+1-p}}(E)\qquad\text{for all compact set }E\subset\Omega,
\EA \ee
where $c$ is a suitable positive constant and $\text{Cap}_{\mathbf{G}_p,\frac{q}{q+1-p}}$ a Bessel capacity, or to have
\bel{EQ4}\BA {ll}
\myint{B}{}\left({\bf W}^{2R}_{1,p}[\mu_B](x)\right)^{q}dx\leq C\mu (B)\qquad\text{for all ball }B \text{ s.t. }
B\cap{\rm supp}\mu\neq\emptyset,
\EA \ee
where $R={\rm diam}(\Omega)$. Other conditions are expressed in terms of Riesz potentials and maximal fractional potentials. Their construction is based upon sharp estimates of solutions of the non-homogeneous 
problem
\bel{EQ5}\BA {ll}
 -\Gd_pu=\gw\qquad&\text{in }\Gw,\\
 \phantom{ -\Gd_p}
 u=0\qquad&\text{on }\prt\Gw,
\EA \ee
 for positive measures $\gw$. We refer to \cite{Bi1,Bi2,Bi3, Bi4,BiDe,Gr, SeZo} for the previous studies of these and other related results. Concerning the k-Hessian operator in a bounded $(k-1)$-convex domain $\Omega$,  they proved that if $\mu$ has compact support and $||\varphi||_{L^\infty(\partial\Omega)}$ is small enough, the corresponding  problem \eqref{EQ2} with $P(r)=r^q$ with $q>k$ admits a nonnegative solution if and only if 
\bel{EQ6}\BA {ll}
\mu(E)\leq c\text{Cap}_{\mathbf{G}_{2k},\frac{q}{q-k}}(E)\qquad\text{for all compact set }E\subset\Omega,
\EA \ee
or equivalently
\bel{EQ7}\BA {ll}
\myint{B}{}\left[{\bf W}^{2R}_{\frac{2k}{k+1},k+1}[\mu_B(x)]\right]^{q}dx\leq C\mu (B)\qquad\text{for all ball }B \text{ s.t. }
B\cap{\rm supp}\mu\neq\emptyset.
\EA \ee
The results concerning the  linear case $p=2$ and $k=1$, can be found in \cite{AdPi,BaPi,V}.
The main tools in their proofs are derived from recent advances in potential theory for nonlinear elliptic equations obtained
by Kilpelainen and Mal\'y \cite{KiMa1,KiMa2}, Trudinger and Wang  \cite{TW1,TW2,TW3}, and Labutin \cite{La} thanks to whom the authors first provide global pointwise estimates for solutions of the homogeneous Dirichlet problems in terms of Wolff potentials of suitable order.   \medskip

For $s>1$, $0\leq\ga<\frac{N}{s}$, $\eta\geq 0$ and $0<T\leq\infty$, we recall that the {\it $T$-truncated Wolff potential} of a positive Radon measure $\gm$ is defined in $\BBR^N$ by
\begin{align}\label{EQ8}
{\bf W}^T_{\ga,s}[\mu](x)=\int_{0}^{T}\left(\frac{\mu (B_t(x))}{t^{N-\ga s}}\right)^{\frac{1}{s-1}}\frac{dt}{t},
\end{align}
the {\it $T$-truncated Riesz potential} of a positive Radon measure $\gm$  by
\begin{align}\label{EQ8a}{\bf I}^T_{\ga}[\mu](x)=\myint{0}{T}\frac{\mu (B_t(x))}{t^{N-\ga }}\frac{dt}{t},
\end{align}
and the {\it $T$-truncated $\eta$-fractional maximal potential} of $\gm$ by
\begin{align}
\label{EQ9}{\bf M}^\eta_{\ga,T}[\mu](x)=\sup\left\{\frac{\mu (B_t(x))}{t^{N-\ga }h_\eta(t)}:0<t\leq T\right\},
\end{align}
where $h_\eta(t)=(-\ln t)^{-\eta}\chi_{(0,2^{-1}]}(t)+(\ln 2)^{-\eta}\chi_{[2^{-1},\infty)}(t)$. If $\eta=0$, then $h_\eta=1$ and we denote by ${\bf M}_{\ga,T}[\mu]$ the corresponding {\it $T$-truncated fractional maximal potential} of $\gm$. We also denote  by ${\bf W}_{\alpha,s}[\mu]$ (resp ${\bf I}_{\alpha}[\mu]$, ${\bf M}_{\ga}^\eta[\mu]$ )  the {\it $\infty$-truncated Wolff potential (resp Riesz Potential, $\eta-$ fractional maximal potential) } of $\gm$. 
When the measures are only defined in an open subset $\Omega\subset\BBR^N$, they are naturally extended by $0$ in $\Gw^c$.  For $l\in\BBN^*$,  we define the {\it $l$-truncated exponential function}
\bel{EQ10}\BA {ll}\displaystyle
H_l(r)=e^r-\sum_{j=0}^{l-1}\frac{r^j}{j!},
\EA \ee
and for $a>0$ and $\beta\geq 1$, we set
\bel{EQ11}\BA {ll}
P_{l,a,\beta}(r)=H_l(ar^\gb).
\EA \ee
We put
\bel{EQ13}
Q_p(s) = \left\{ \begin{array}{ll}
    \sum\limits_{q = l}^\infty  {\frac{{{s^{\frac{\beta q}{p-1}}}}}{{{q^{\frac{{\beta q}}{{p - 1}}}q!}}}} \quad&\text{if }\,p \ne 2, \\ [5mm]
    H_l(s^\beta)&\text{if }\,p = 2, 
 \end{array} \right.   
 \ee
$Q_p^*(r)=\max \left\{rs-Q_p(s):s\geq 0\right\}$ is the complementary function to $Q_p$, and
define the corresponding Bessel and Riesz capacities respectively by
\bel{EQ14}
\text{Cap}_{{\mathbf{G}_{\alpha p } },Q^*_p}(E) = \inf \left\{ {\int_{\mathbb{R}^N} {Q^*_p(f)}dx :{\mathbf{G}_{\alpha p} }\ast f \geq {\chi _E},f \geq 0, Q^*_p(f) \in {L^1}({\mathbb{R}^N})} \right\},
 \ee
 and 
 \bel{EQ15}
 \text{Cap}_{{\mathbf{I}_{\alpha p} },Q^*_p}(E) = \inf \left\{ {\int_{\mathbb{R}^N} {Q^*_p(f)}dx :{\mathbf{I}_{\alpha p} }\ast f \geq {\chi _E},f \geq 0, Q^*_p(f) \in {L^1}({\mathbb{R}^N})} \right\},
  \ee
where $\mathbf{G}_{\alpha p }(x)=\CF^{-1}\left((1+\abs.^2)^{-\frac{\alpha p}{2}}\right)(x)$ is the Bessel kernel of order $p$ and $I_{\alpha p}(x)=(N-\alpha p)^{-1}|x|^{-(N-\alpha p )}$. \medskip

The expressions $a \wedge b$ and $a \vee b$ stand for $\min\{a,b\}$ and $\max\{a,b\}$ respectively. We denote by $B_r$ the ball of center $0$ and radius $r>0$.  Our main results are the following theorems.
\begin{theorem}\label{MT1}
Let $1<p<N$, $a>0$, $l\in \mathbb{N}^*$ and $\beta\geq 1$ such that $l\beta>p-1$. Let $\Omega\subset \mathbb{R}^N$ be a bounded domain. If $\mu$ is a nonnegative Radon measure in $\Omega$, there exists $M>0$ depending on $N,p,l,a,\beta$ and $diam\,(\Omega)$ (the diameter of $\Omega$) such that if 
\begin{equation*}
||{\bf M}_{ p,2\,diam\,(\Omega)}^{\frac{{(p - 1)(\beta  - 1)}}{\beta }}[\mu ]|{|_{{L^\infty }({\mathbb{R}^N})}} \le M,
\end{equation*}
and  $\omega=M||{\bf M}_{ p,2\,diam (\Omega)}^{\frac{(p-1)(\beta-1)}{\beta}} [1] ||_{{L^\infty }(\mathbb{R}^N)}^{-1}+\mu$ with $c_p={1 \vee {4^{\frac{{2 - p}}{{p - 1}}}}} $, then $ P_{l,a,\beta}\left(2c_pK_1{\bf W}_{1 ,p}^{2\,diam\,(\Omega)}[\omega ]\right)$  is integrable in $\Gw$ and  the following  Dirichlet problem
  \begin{equation}\label{MT1a}
  \left. \begin{array}{ll}
    - \Delta _p u = P_{l,a,\beta}(u)+\mu \qquad&\text{in }\;\Omega,  \\ 
    \phantom{    - \Delta _p}
   u = 0 \qquad&\text{on }\;\prt\Omega,
   \end{array} \right.
  \end{equation}
   admits a nonnegative renormalized solution $u$ which satisfies  
   \begin{equation}\label{MT1b}
   {u(x)} \le 2c_pK_1{\bf W}_{1 ,p}^{2\,diam\,(\Omega)}[\omega ](x) \quad\forall x\in \Omega.
   \end{equation}
 The role of  $K_1=K_1(N,p)$ will be made explicit in Theorem \ref{TH4}.
 
   Conversely, if \eqref{MT1a} admits a nonnegative renormalized solution $u$ and $P_{l,a,\beta}(u)$ is integrable in $\Gw$, then for any compact set $K\subset \Omega$, there exists a positive constant $C$ depending on $N,p,l,a, \beta$ and $\dist(K,\partial\Omega)$ such that 
   \begin{equation}\label{MT1c}
   \int_E {P_{l,a,\gb}(u)dx}  + \mu (E) \le C \text{Cap}_{{\mathbf{G}_{ p}},Q_p^*}(E)~~\textrm{ for all Borel sets } E\subset K.
   \end{equation}
   Furthermore, $u\in W^{1,p_1}_0(\Omega)$ for all $1\le p_1<p$. 
 \end{theorem}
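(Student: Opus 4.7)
The plan is to combine the sharp pointwise Wolff-potential bound of Theorem~\ref{TH4} for the $p$-Laplacian measure-data problem with a monotone iteration, using the smallness of $\|\mathbf{M}_{p,2R}^{\eta}[\mu]\|_{L^\infty}$ (with $\eta=(p-1)(\beta-1)/\beta$ and $R=\mathrm{diam}(\Omega)$) to turn the exponential growth of $P_{l,a,\beta}$ into a stability bound on a suitable order interval of Wolff potentials.

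For existence, first approximate $\mu$ by a sequence of nonnegative $\mu_n\in L^\infty(\Omega)$ compactly supported in $\Omega$ with $\mathbf{M}_{p,2R}^{\eta}[\mu_n]\le C\,\mathbf{M}_{p,2R}^{\eta}[\mu]$, set $\omega_n=M\|\mathbf{M}_{p,2R}^{\eta}[1]\|_{L^\infty}^{-1}+\mu_n$, and inductively define $v_{k+1}^{(n)}$ as the renormalized solution of $-\Delta_p v=P_{l,a,\beta}(v_k^{(n)})+\mu_n$ in $\Omega$ with $v=0$ on $\partial\Omega$, starting from $v_0^{(n)}\equiv 0$. By Theorem~\ref{TH4},
\begin{equation*}
v_{k+1}^{(n)}(x)\le K_1\mathbf{W}_{1,p}^{2R}\bigl[P_{l,a,\beta}(v_k^{(n)})+\mu_n\bigr](x).
\end{equation*}
Writing $\bar u:=2c_pK_1\mathbf{W}_{1,p}^{2R}[\omega]$ and using the Wolff-potential subadditivity $\mathbf{W}_{1,p}^{2R}[f+g]\le c_p(\mathbf{W}_{1,p}^{2R}[f]+\mathbf{W}_{1,p}^{2R}[g])$, stability of the order interval $[0,\bar u]$ under this iteration reduces to the self-improvement bound
\begin{equation*}
2c_pK_1\mathbf{W}_{1,p}^{2R}\bigl[P_{l,a,\beta}(\bar u)\bigr](x)\le\bar u(x)-2c_pK_1\mathbf{W}_{1,p}^{2R}[\mu](x).
\end{equation*}
Once this is established, $\{v_k^{(n)}\}$ is monotone and bounded by $\bar u$, hence converges to some $u^{(n)}\le\bar u$ solving the approximate problem; the Dal Maso--Murat--Orsina--Prignet stability theorem for renormalized solutions with measure data then lets $n\to\infty$, producing the desired $u\le\bar u$, which automatically lies in $W_0^{1,p_1}(\Omega)$ for $1\le p_1<p$ by the standard regularity for renormalized solutions.

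The main obstacle is exactly the self-improvement bound. Expanding $P_{l,a,\beta}(\bar u)=\sum_{q\ge l}a^q\bar u^{\beta q}/q!$ and inserting into the definition \eqref{EQ8} of the Wolff potential, the problem reduces to summable control of weighted $L^{\beta q/(p-1)}$-norms of $\mathbf{W}_{1,p}^{2R}[\omega]$ on balls, with weights dictated by the fractional maximal function. The logarithmic weight $h_\eta$ with exponent $\eta=(p-1)(\beta-1)/\beta$ is precisely the critical weight that makes $\exp(c\,\mathbf{W}_{1,p}^{2R}[\omega]^\beta)$ locally integrable as soon as $\|\mathbf{M}_{p,2R}^{\eta}[\omega]\|_{L^\infty}$ is small enough; this is a Moser--Trudinger type estimate for Wolff potentials whose scaling matches the hypothesis $l\beta>p-1$ via Hedberg--Wolff duality. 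For the converse, fix a compact $K\subset\Omega$ and a Borel $E\subset K$; for any admissible $f\ge 0$ in the definition \eqref{EQ14} of $\mathrm{Cap}_{\mathbf{G}_p,Q_p^*}(E)$, test the equation against $(\mathbf{G}_p\ast f)$ multiplied by a cut-off adapted to $\mathrm{dist}(K,\partial\Omega)$, and apply Young's inequality in the Orlicz duality pair $(Q_p,Q_p^*)$: the already-established integrability $\int_\Omega Q_p(u)\,dx<\infty$ absorbs the $u$-term, and taking the infimum over $f$ yields \eqref{MT1c}.
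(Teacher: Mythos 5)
Your existence argument follows the same overall strategy as the paper (monotone iteration, the upper Wolff bound of Theorem \ref{TH4}, passage to the limit via the stability theorem of \cite{DMOP}), but it leaves the heart of the matter unproved. The ``self-improvement bound'' you isolate is exactly the content of the paper's Theorems \ref{th2} and \ref{TH3}, namely (after rescaling) ${\bf W}^{2R}_{1,p}\bigl[H_l\bigl(\overline a\,({\bf W}^{2R}_{1,p}[\omega])^\beta\bigr)\bigr]\le {\bf W}^{2R}_{1,p}[\omega]$, and your justification of it is only a heuristic. Local exponential integrability of $\exp\bigl(c({\bf W}^{2R}_{1,p}[\omega])^\beta\bigr)$ (the Moser--Trudinger type estimate, Theorem \ref{VHVth2}) is indeed the starting point, but it does not by itself give a \emph{pointwise} bound of the Wolff potential of that exponential by ${\bf W}^{2R}_{1,p}[\omega]$: one needs the splitting of the potential at scale $r$, the ball-by-ball use of \eqref{ine1}, and the integration in $r$ carried out in the proof of Theorem \ref{th2}, and then the rescaling argument of Theorem \ref{TH3} using $\theta^{-l}H_l(s)\le H_l(\theta^{-1}s)$, where the smallness parameter $M$ absorbs the constants $2c_pK_1$ and the coefficient $a$; it is there, through the positivity of the exponent $\frac{1}{2(p-1)}\bigl(\frac{\beta l}{p-1}-1\bigr)$, that the hypothesis $l\beta>p-1$ enters --- not through any ``Hedberg--Wolff duality.'' (Two smaller points: your reduction asks for $2c_pK_1{\bf W}[P_{l,a,\beta}(\bar u)]\le \bar u-2c_pK_1{\bf W}[\mu]$, which is stronger than needed --- the iteration only requires $c_pK_1\bigl({\bf W}[P_{l,a,\beta}(\bar u)]+{\bf W}[\mu]\bigr)\le 2c_pK_1{\bf W}[\omega]$, i.e. ${\bf W}[P_{l,a,\beta}(\bar u)]\le{\bf W}[\omega]$ together with $\mu\le\omega$; and the preliminary approximation of $\mu$ by bounded $\mu_n$ is superfluous and delicate, since Theorem \ref{P3} requires weak $L^1$ convergence of the absolutely continuous parts and narrow convergence of the singular parts, which an $L^\infty$ approximation of a measure with nontrivial singular part does not provide. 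The paper keeps $\mu$ fixed and only the term $P_{l,a,\beta}(u_m)$, which converges strongly in $L^1(\Omega)$, varies.)

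The converse as you propose it does not work for $p\neq 2$. Testing the equation against ${\bf G}_p\ast f$ times a cut-off is a duality argument available for the Laplacian (Baras--Pierre), but $-\Delta_p$ is nonlinear in $\nabla u$ and there is no such pairing between the datum and the Bessel capacity; moreover nothing of the form $\int_\Omega Q_p(u)\,dx<\infty$ is ``already established'' in the converse direction, where one only assumes $P_{l,a,\beta}(u)\in L^1(\Omega)$. The paper instead converts the PDE information into potential-theoretic information via the \emph{lower} Wolff estimate of Theorem \ref{TH4}, $u\ge \frac{1}{K_1}{\bf W}^{d(x,\partial\Omega)/3}_{1,p}[P_{l,a,\beta}(u)+\mu]$, and then applies Theorem \ref{TH1}, Part 2, whose proof uses Fefferman's maximal-function inequality to get $\int P_{l,a,\beta}\bigl(c{\bf W}[\omega_E]\bigr)dx\le c_{51}\omega(E)$ and the $L^q$-comparison \eqref{ine2} between Wolff and Bessel potentials with constants of size $(c_2q)^q$ --- this precise growth in $q$ is what allows the exponential series defining $Q_p$ to be summed before the Orlicz--Young duality with $Q_p^*$ is invoked. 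Without these ingredients your capacity estimate \eqref{MT1c} is not reached.
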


When $\Gw=\BBR^N$, we have a similar result provided $\mu $ has compact suppport.
\begin{theorem}\label{MT2}
Let $1<p<N$, $a>0$, $l\in \mathbb{N}^*$ and $\beta\geq 1$ such that $l\beta>\frac{N(p-1)}{N-p}$ and $R>0$. If $\mu$ is a nonnegative Radon measure in $\mathbb{R}^N$ with $\supp (\mu)\subset B_{R}$ there exists $M>0$ depending on $N,p,l,a,\beta$ and $R$ such that if 
\begin{equation*}
||{\bf M}_{ p}^{\frac{{(p - 1)(\beta  - 1)}}{\beta }}[\mu ]|{|_{{L^\infty }({\mathbb{R}^N})}} \le M,
\end{equation*}
and $\omega=M||{\bf M}_{p}^{\frac{(p-1)(\beta-1)}{\beta}}[\chi_{B_R}]||_{L^\infty(\mathbb{R}^N)}^{-1}\chi_{B_R}+\mu$, then $P_{l,a,\beta}\left(2c_pK_1{\bf W}_{1 ,p}[\omega ]\right)$ is integrable in $\BBR^N$ and the following problem 
  \begin{equation}\label{MT2a}
  \left. \begin{array}{ll}
   \phantom{,} - \Delta _p u = P_{l,a,\beta}(u)+\mu ~\text{in }\;\mathcal{D}'(\mathbb{R}^N),\\
    \phantom{}
     \inf_{\mathbb{R}^N}u=0,
   \end{array} \right.
  \end{equation}
   admits a p-superharmonic solution $u$ which satisfies 
   \begin{equation}\label{MT2b}
   {u(x)} \le 2c_pK_1{\bf W}_{1 ,p}[\omega ](x) \quad\forall x\in \mathbb{R}^N,
   \end{equation}
($c_p$ and $K_1$ as in Theorem \ref{MT1}).

   Conversely, if \eqref{MT2a} has a solution $u$ and $P_{l,a,\beta}(u)$ is locally integrable in  $\mathbb{R}^N$, then there exists a positive constant $C$ depending on $N,p,l,a, \beta$  such that 
   \begin{equation}\label{MT2c}
   \int_E {P_{l,a,\gb}(u)dx}  + \mu (E) \le C \text{Cap}_{{\mathbf{I}_{ p}},Q_p^*}(E)~~\forall E\subset \mathbb{R}^N, E ~\textrm{Borel}.  \end{equation}
   Furthermore, $u\in W^{1,p_1}_{loc}(\mathbb{R}^N)$ for all $1\le p_1<p$. 
 \end{theorem}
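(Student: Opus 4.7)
Theorem~\ref{MT2} is the $\BBR^N$--analogue of Theorem~\ref{MT1} with compactly supported datum, so the plan is to port the bounded--domain scheme to the whole space, replacing truncated Wolff, Riesz and maximal potentials by their global counterparts, and Bessel capacities by Riesz capacities in the converse direction. The sharper integrability threshold $l\gb > N(p-1)/(N-p)$ is exactly what guarantees integrability of the nonlinearity and of the relevant potentials at infinity, while the hypothesis $\supp\gm \sbs B_R$ keeps ${\bf W}_{1,p}[\gm]$ controlled outside $B_R$ and makes $\mathbf{I}_p$-based capacities meaningful.

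\textbf{Existence via Wolff potential iteration.} I would build the solution from the Kilpelainen--Maly pointwise bound, which on $\BBR^N$ says that any minimal nonnegative $p$-superharmonic solution $v$ of $-\Gd_p v = \eta$ with $\inf v = 0$ satisfies $v \le K_1 {\bf W}_{1,p}[\eta]$. For $v \ge 0$ set $\CT(v)$ to be that minimal solution with $\eta = P_{l,a,\gb}(v)+\gm$. By the standard quasi-subadditivity of ${\bf W}_{1,p}$ with constant $c_p = 1\vee 4^{(2-p)/(p-1)}$,
\[
\CT(v)(x) \le c_p K_1 \bigl( {\bf W}_{1,p}[P_{l,a,\gb}(v)](x) + {\bf W}_{1,p}[\gm](x)\bigr).
\]
Writing $U := 2c_p K_1 {\bf W}_{1,p}[\gw]$, the set $\CE := \{v\ge 0 : v\le U\}$ is stabilized by $\CT$ provided
\[
{\bf W}_{1,p}\bigl[P_{l,a,\gb}(U)\bigr] \le {\bf W}_{1,p}[\gw] \qquad\text{pointwise on }\BBR^N,
\]
since ${\bf W}_{1,p}[\gm] \le {\bf W}_{1,p}[\gw]$ trivially and $P_{l,a,\gb}$ is monotone. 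Granting this key inequality, a monotone iteration $u_0\equiv 0$, $u_{n+1}=\CT(u_n)$ yields an increasing sequence bounded by $U$; its limit is a $p$-superharmonic solution satisfying~(\ref{MT2b}), and the stability of $p$-superharmonic / renormalized solutions under monotone convergence of the source identifies it with a distributional solution of~(\ref{MT2a}). Local integrability of $P_{l,a,\gb}(U)$ follows from the condition $l\gb > N(p-1)/(N-p)$ combined with the algebraic decay of ${\bf W}_{1,p}[\gw](x)$ as $|x|\to\infty$ ensured by $\supp\gw\sbs B_R$.

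\textbf{Key estimate and converse.} The crux is the Wolff--of--exponential--of--Wolff inequality above, and this is where the smallness of $\|{\bf M}_p^{(p-1)(\gb-1)/\gb}[\gm]\|_\infty$ enters essentially. The plan is to show, via a Moser-type argument, that the maximal-potential hypothesis yields a quantitative logarithmic bound on ${\bf W}_{1,p}[\gm]^\gb$ with a small constant; exponentiation then renders $P_{l,a,\gb}(U) = H_l(aU^\gb)$ integrable enough that its Wolff potential is dominated by ${\bf W}_{1,p}[\gw]$, provided $M$ is chosen small, with the added summand $M\|{\bf M}_p^{(p-1)(\gb-1)/\gb}[\chi_{B_R}]\|_\infty^{-1}\chi_{B_R}$ in $\gw$ serving as a lower bound on $B_R$ that absorbs the constants produced in the estimate. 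For the converse, test the equation against the equilibrium Riesz potential of a Borel set $E$ realizing $\text{Cap}_{\mathbf{I}_p,Q_p^*}(E)$, and use the $(Q_p,Q_p^*)$ Orlicz duality; compactness of $\supp\gm$ makes the Bessel--to--Riesz reduction automatic. The $W^{1,p_1}_{loc}$ regularity for $p_1 < p$ is routine via the Boccardo--Gallou\"et gradient estimates for renormalized solutions with locally finite measure data.

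\textbf{Main obstacle.} The delicate step is the pointwise inequality ${\bf W}_{1,p}[P_{l,a,\gb}(U)] \le {\bf W}_{1,p}[\gw]$: controlling a Wolff potential of an exponential of a Wolff potential requires tracking quantitatively how the fractional maximal hypothesis propagates through two successive nonlinear potential operations. The specific exponent $\eta = (p-1)(\gb-1)/\gb$ on ${\bf M}_p$ is precisely the one that makes the two sides comparable: any smaller $\eta$ would fail to control the exponential growth, any larger $\eta$ would be wastefully restrictive. Once this lemma is in hand, the rest of the proof is a structural adaptation of Theorem~\ref{MT1} from bounded $\Omega$ to $\BBR^N$.
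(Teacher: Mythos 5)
Your overall architecture coincides with the paper's: iterate $-\Gd_p u_{m+1}=P_{l,a,\gb}(u_m)+\gm$ using the global Kilpelainen--Mal\'y bound for $p$-superharmonic functions, close the iteration by a uniform bound $u_m\le 2c_pK_1{\bf W}_{1,p}[\gw]$, pass to the monotone limit and use weak continuity of the Riesz measure (the paper's Theorems 4.5--4.6), and for the converse combine the lower Wolff bound with an Orlicz-capacity estimate. But there is a genuine gap at exactly the point you label the ``main obstacle'': the invariance inequality ${\bf W}_{1,p}\bigl[P_{l,a,\gb}(2c_pK_1{\bf W}_{1,p}[\gw])\bigr]\le {\bf W}_{1,p}[\gw]$ is asserted, not proved, and the sketch offered (``a Moser-type argument yields a logarithmic bound on ${\bf W}_{1,p}[\gm]^\gb$ with small constant; exponentiation then gives integrability'') does not deliver it. In the paper this is the content of Theorems 2.3, 2.4 and 2.6 (TH3-B), and it is not soft: the local exponential estimate (Theorem 2.2) only controls $\exp(\gd({\bf W}[\gw])^\gb)$ on balls of bounded radius, and to dominate the Wolff potential of $H_l(\gd({\bf W}[\gw])^\gb)$ \emph{globally} one needs the splitting into truncated lower/upper potentials ${\bf L}^t,{\bf W}^t$, the decay ${\bf W}_{1,p}[\gw](y)\le c|y|^{-\frac{N-p}{p-1}}$ coming from $\supp\gw\subset B_R$, and the hypothesis $l\gb>\frac{N(p-1)}{N-p}$ used quantitatively (it produces the exponent $\gamma=\frac{1}{p-1}\bigl(\frac{l\gb(N-p)}{p-1}-N\bigr)>0$ that makes the majorant integrable at infinity); moreover trading the smallness of $M$ against the fixed constant $\overline a\sim a(c_pK_1)^\gb$ requires the homogeneity trick $\theta^{-l}H_l(s)\le H_l(\theta^{-1}s)$. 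None of this is reconstructed, so the existence half is incomplete precisely where the theorem's hypotheses ($l\gb$ threshold, compact support, $\eta=\frac{(p-1)(\gb-1)}{\gb}$) do their work; indeed the paper's remark after Theorem 1.6 shows the statement \emph{fails} when $l\gb\le\frac{N(p-1)}{N-p}$, so no soft argument can close this step.

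The converse as you sketch it would also not go through: Orlicz capacities such as $\text{Cap}_{\mathbf{I}_p,Q_p^*}$ have no equilibrium measure/potential to ``test the equation against,'' and your outline omits the essential localization step. Starting from $u\ge K_1^{-1}{\bf W}_{1,p}[\gw]$ with $d\gw=P_{l,a,\gb}(u)\,dx+d\gm$, the paper first passes from $P_{l,a,\gb}(c{\bf W}_{1,p}[\gw])\le d\gw/dx$ to $\int P_{l,a,\gb}(c{\bf W}_{1,p}[\gw_E])\,dx\le C\gw(E)$ by multiplying with $(M_\gw\chi_E)^{l\gb/(p-1)}$ and invoking Fefferman's theorem for the $\gw$-centered maximal function, then converts $P_{l,a,\gb}\circ{\bf W}_{1,p}$ into $Q_p\circ\mathbf{I}_p$ via the $L^q$ comparison of Wolff and Riesz potentials with the precise constant growth $(c_1q)^q$ (Theorem 2.1), and only then applies Young's inequality for the pair $(Q_p,Q_p^*)$ to an arbitrary admissible $f$ with $\mathbf{I}_p\ast f\ge\chi_E$. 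Also note there is no ``Bessel-to-Riesz reduction'' in this part: the untruncated bound forces the Riesz scale directly. Finally, minor but worth flagging: monotonicity of your map $\CT$ (minimal solution with larger datum is larger) is not automatic for $p\ne 2$ measure data; the paper handles this by the construction of a nondecreasing sequence as in Phuc--Verbitsky, and some such justification is needed in your scheme as well.
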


Concerning the $k$-Hessian operator we recall some notions introduced by Trudinger and Wang \cite{TW1,TW2,TW3}, and we follow their notations. For $k=1,...,N$ and $u\in C^2(\Omega)$ the k-Hessian operator $F_k$ is defined by 
 $$F_k[u]=S_k(\lambda(D^2u)),$$
 where $\lambda(D^2u)=\lambda=(\lambda_1,\lambda_2,...,\lambda_N)$ denotes the eigenvalues of the Hessian matrix of second partial derivatives $D^2u$ and $S_k$ is the k-th elementary symmetric polynomial that is \[{S_k}(\lambda ) = \sum\limits_{1 \le {i_1} < ... < {i_k} \le N} {{\lambda _{{i_1}}}...{\lambda _{{i_k}}}}. \]
It is straightforward that \[{F_k}[u] = {\left[ {{D^2}u} \right]_k},\]
 where in general $[A]_k$ denotes the sum of the k-th principal minors of a matrix $A=(a_{ij})$.
  In order that there exists a smooth k-admissible function which vanishes on $\partial \Omega$, the boundary $\partial \Omega$ must satisfy a uniformly (k-1)-convex condition, that is 
  \begin{equation*}
  S_{k-1}(\kappa ) \geq c_0 >0~on ~~ \partial\Omega,
  \end{equation*}
for some positive constant $c_0$, where $\kappa= (\kappa_1,\kappa_2,...,\kappa_{n-1})$ denote the principal curvatures of $\partial \Omega$ with respect to its inner normal. We also denote by $\Phi^k(\Omega)$ the class of upper-semicontinuous functions $\Gw\to[Õ-\infty,\infty)$ which are $k$-convex, or subharmonic in the Perron sense (see Definition \ref{k-conv}).\
In this paper we prove the following theorem (in which expression $\mathbb E[q]$ is the largest integer less or equal to $q$)


\begin{theorem}\label{MT3}Let $k\in \{1,2,...,\mathbb E[N/2]\}$ such that $2k<N$, $l\in \mathbb N^*$, $\beta\geq 1$ such that $l\beta>k$ and $a>0$. Let $\Omega$ be a bounded uniformly (k-1)-convex domain in $\mathbb{R}^N$. Let $\varphi$ be a nonnegative continuous function on $\partial \Omega$ and  $\mu=\mu_1+f$ be a nonnegative Radon measure where $\mu_1$ has compact support in $\Omega$ and $f\in L^q(\Omega)$ for some $q>\frac{N}{2k}$. Let $K_2=K_2(N,k)$ be the constant $K_2$ which appears in Theorem \ref{TH5}. Then, there exist positive constants $b$, $M_1$ and $M_2$ depending on $N,k,l,a,\beta$ and $diam\,(\Omega)$ such that, if $\max_{\partial \Omega}\varphi \le M_2$ and
\begin{equation*}
||{\bf M}_{ 2k,2diam\,(\Omega)}^{\frac{{k(\beta  - 1)}}{\beta }}[\mu ]|{|_{{L^\infty }({\mathbb{R}^N})}} \le M_1,
\end{equation*}
then $P_{l,a,\beta}\left(2K_2{\bf W}_{\frac{2k}{k+1},k+1}^{2\,diam\,(\Omega)}[\mu ] + b\right)$ is integrable in $\Omega$ and the following  Dirichlet problem 
    \begin{equation}\label{MT3a}
    \left. \begin{array}{ll}
      F_k[-u] = P_{l,a,\beta}(u)+\mu \qquad&\text {in }\;\Omega,  \\ 
      \phantom{F_k[-]}
     u = \varphi&text {on }\;\prt\Omega,  \\ 
     \end{array} \right.
    \end{equation}
     admits a nonnegative  solution $u$, continuous near $\partial \Omega$, with $-u\in \Phi^k(\Omega)$ which satisfies      \begin{equation}\label{MT3b}
     {u(x)} \le 2 K_2{\bf W}_{\frac{2k}{k+1},k+1}^{2diam\,(\Omega)}[\mu ](x) +b\qquad\forall x\in \Omega.
     \end{equation}
     
     Conversely, if \eqref{MT3a} admits a nonnegative solution $u$, continuous near $\partial \Omega$, such that $-u\in \Phi^k(\Omega)$ and $P_{l,a,\beta}(u)$ is integrable in $\Gw$, then for any compact set $K\subset \Omega$, there exists a positive constant $C$ depending on $N,k, l,a,\beta$ and $dist(K,\partial\Omega)$ such that there holds 
     \begin{equation}\label{MT3c}
        \int_E {P_{l,a,\beta}(u)dx}  + \mu (E) \le C \text{Cap}_{{\mathbf{G}_{2k}},Q_{k+1}^*}(E)\qquad\forall E\subset K, E\text { Borel}, 
 \end{equation}
        where $Q_{k+1}(s)$ is defined by (\ref{EQ13}) with $p=k+1$, $Q_{k+1}^*$ is its complementary function 
and $\text{Cap}_{{\mathbf{G}_{2k}},Q_{k+1}^*}(E)$ is defined accordingly by (\ref{EQ14}).
 \end{theorem}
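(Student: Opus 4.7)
The plan is to follow the blueprint developed for the $p$-Laplace case in Theorem \ref{MT1}, reading the $k$-Hessian case through the correspondence $-\Delta_p \leftrightarrow F_k[-\cdot]$ with $p=k+1$ and replacing the Wolff potential $\mathbf{W}_{1,p}^{T}$ by $\mathbf{W}_{\frac{2k}{k+1},k+1}^{T}$, which is the relevant potential for the $k$-Hessian by the Labutin/Trudinger--Wang pointwise estimate recorded in Theorem \ref{TH5}. The role played in Theorem \ref{MT1} by the shift $\omega=\mu+M\|\mathbf{M}^{\eta}_{p}[1]\|^{-1}$ is now played by the additive constant $b$, which absorbs the boundary datum $\varphi$ through a $k$-harmonic lifting.

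\textbf{Existence via iteration.} I would solve inductively the auxiliary Hessian Dirichlet problems
\[
F_k[-u_{n+1}] = P_{l,a,\beta}(u_n) + \mu \text{ in } \Omega, \qquad u_{n+1} = \varphi \text{ on } \partial\Omega,
\]
starting from $u_0 = 0$, using Trudinger--Wang's theory to produce a $k$-admissible solution once the right-hand side is integrable. Theorem \ref{TH5} yields the pointwise majorisation
\[
u_{n+1}(x) \le K_2\,\mathbf{W}_{\frac{2k}{k+1},k+1}^{2\,diam(\Omega)}\!\bigl[P_{l,a,\beta}(u_n) + \mu\bigr](x) + b,
\]
where $b$ is controlled by $\max_{\partial\Omega}\varphi$ (through the maximum principle applied to the $k$-harmonic lifting of $\varphi$). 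Setting the barrier $U(x) := 2K_2\,\mathbf{W}_{\frac{2k}{k+1},k+1}^{2\,diam(\Omega)}[\mu](x) + b$, the sequence satisfies $u_n \le U$ as soon as
\[
K_2\,\mathbf{W}_{\frac{2k}{k+1},k+1}^{2\,diam(\Omega)}\!\bigl[P_{l,a,\beta}(U)\bigr](x) \le K_2\,\mathbf{W}_{\frac{2k}{k+1},k+1}^{2\,diam(\Omega)}[\mu](x).
\]
Monotonicity of the iteration (inherited from the monotonicity of $k$-Hessian right-hand sides in the Perron formulation of $\Phi^k(\Omega)$) then supplies a limit $u \le U$, whose continuity near $\partial\Omega$ follows from the continuity of $\varphi$, the integrability of $P_{l,a,\beta}(U)$, and standard boundary barriers for $F_k$.

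\textbf{Key exponential inequality.} Expanding $P_{l,a,\beta}(U) = \sum_{j\ge l} \frac{a^j}{j!} U^{j\beta}$ and iterating the Wolff potential, the barrier condition reduces to an estimate of the form
\[
\mathbf{W}_{\frac{2k}{k+1},k+1}^{2\,diam(\Omega)}\!\bigl[U^{j\beta}\bigr](x) \le C^{j} \lambda_j\bigl(\|\mathbf{M}_{2k,2\,diam(\Omega)}^{\eta}[\mu]\|_{L^\infty}\bigr)\, \mathbf{W}_{\frac{2k}{k+1},k+1}^{2\,diam(\Omega)}[\mu](x),
\]
with $\eta = k(\beta-1)/\beta$ and $\lambda_j$ growing like $j^{j\beta/k}$ times a factor tending to $0$ with the smallness parameter $M_1$. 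The choice of $\eta$ is precisely the one making $\sum_{j\ge l}\frac{a^j}{j!}\lambda_j$ convergent and arbitrarily small when $\|\mathbf{M}_{2k,2\,diam(\Omega)}^{\eta}[\mu]\|$ and $\max_{\partial\Omega}\varphi$ are small enough, under $l\beta > k$. This is the analogue, for $(\alpha,p) = (\tfrac{2k}{k+1}, k+1)$ (so that $\alpha p = 2k$ and $1/(p-1) = 1/k$), of the iterated-Wolff estimate proved for Theorem \ref{MT1}.

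\textbf{Necessary condition.} For the converse, fix a compact $K \subset \Omega$ and set $d\sigma := P_{l,a,\beta}(u)\,dx + d\mu$. The Wolff-potential lower bound of Theorem \ref{TH5} gives, for every ball $B$ centered in $K$ with $3B \subset \Omega$,
\[
u(x) \ge c\,\mathbf{W}_{\frac{2k}{k+1},k+1}\!\bigl[\chi_B\,d\sigma\bigr](x) \quad\text{for a.e. } x \in B,
\]
with $c = c(N,k,\mathrm{dist}(K,\partial\Omega))$. Raising to power $j\beta$, multiplying by $a^j/j!$, summing from $j \ge l$, and integrating over $B$, one obtains the testing inequality
\[
\int_B Q_{k+1}\!\bigl(c\,\mathbf{W}_{\frac{2k}{k+1},k+1}[\chi_B\,d\sigma]\bigr)\,dx \le C\,\sigma(B)
\]
for every such ball. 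By the duality between Orlicz-Bessel capacity and Wolff-testing measures (in the spirit of the Phuc--Verbitsky characterisation underlying \eqref{EQ7}), this is exactly the condition $\sigma(E) \le C\,\mathrm{Cap}_{\mathbf{G}_{2k},Q_{k+1}^*}(E)$ at the ball scale, and a Besicovitch covering of $K$ promotes it to \eqref{MT3c}.

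\textbf{Main obstacle.} The hardest step is the key exponential inequality: one must control an entire series of iterated Wolff-potential powers with uniform constants, and matching the combinatorial growth $j^{j\beta/k}$ produced by iterating $\mathbf{W}_{\frac{2k}{k+1},k+1}$ on powers against the factorial decay $1/j!$ forces the precise exponent $\eta = k(\beta-1)/\beta$ and the smallness hypothesis on $\|\mathbf{M}_{2k,2\,diam(\Omega)}^{\eta}[\mu]\|_{L^\infty}$. The condition $l\beta > k$ plays the Sobolev role $l\beta > p-1$ with $p = k+1$: it guarantees that the first surviving term $j=l$ is already integrable at the Hessian scale and that $Q_{k+1}$ is a well-defined Young function.
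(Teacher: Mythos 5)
Your skeleton coincides with the paper's: the same iterative scheme $F_k[-u_{m+1}]=P_{l,a,\beta}(u_m)+\mu$, $u_{m+1}=\varphi$ on $\prt\Gw$, the two-sided Wolff estimates of Theorem \ref{TH5}, a monotone limit, and the lower Wolff bound for the converse. But the analytic core is missing. The paper obtains the uniform bound \eqref{1339}--\eqref{1339a} on the iterates by invoking Theorem \ref{TH3} (with $f=b_0=K_2\max_{\prt\Gw}\varphi$ and $\varepsilon=a$), which in turn rests on Theorems \ref{th2} and \ref{th3}, i.e.\ on the pointwise self-improving estimate ${\bf W}^{R}_{\frac{2k}{k+1},k+1}\bigl[H_l\bigl(\overline a\,({\bf W}^{R}_{\frac{2k}{k+1},k+1}[\omega])^\beta\bigr)\bigr]\le {\bf W}^{R}_{\frac{2k}{k+1},k+1}[\omega]$, whose proof uses the exponential local integrability of Wolff potentials under the $\eta$-fractional maximal hypothesis (Theorem \ref{VHVth2}). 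You replace this by an asserted ``key exponential inequality'' ${\bf W}[U^{j\beta}]\le C^j\lambda_j\,{\bf W}[\mu]$ with $\lambda_j\sim j^{j\beta/k}$ times a factor ``tending to $0$ with $M_1$'', and you yourself flag it as the main obstacle. As stated it does not close: since $j!\sim j^je^{-j}$, the series $\sum_j \frac{a^j}{j!}C^j\lambda_j$ diverges for $\beta\ge k$ unless the unspecified factor decays superexponentially in $j$, and nothing in your hypothesis (smallness of $\|{\bf M}^{k(\beta-1)/\beta}_{2k,R}[\mu]\|_{L^\infty}$ alone) yields such decay term by term; the paper's mechanism is precisely to avoid the term-wise summation by working with the exponential directly through Theorem \ref{VHVth2}. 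A second, smaller omission in the existence part: to run the iteration you must check at each step that the datum $P_{l,a,\beta}(u_m)+\mu$ satisfies the decomposition hypothesis of Theorem \ref{TH5} (compactly supported measure plus $L^q$, $q>\frac{N}{2k}$); integrability alone is not enough. The paper does this by splitting with a cut-off near $\prt\Gw$ and using that $u_m$ is continuous, hence bounded, near $\prt\Gw$. You also do not say how the limit solves the equation; the paper uses the weak continuity of $\mu_k[\cdot]$ (Theorem \ref{TH6}).

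For the converse you also deviate from the paper without supplying the needed ingredients. The paper applies Theorem \ref{TH1}, Part 2, whose proof works for \emph{arbitrary} Borel $E\subset K$ via the maximal-function trick ($(M_\omega\chi_E)^{\frac{l\beta}{p-1}}$ plus Fefferman's theorem), the integral comparison ${\bf W}^R_{\alpha,p}\leftrightarrow{\bf G}_{\alpha p}$ of Theorem \ref{VHVth1} (this is where the $q^q$ constants and hence the normalization of $Q_{k+1}$ enter), and Young's inequality with the $\Delta_2$ property of $Q_{k+1}^*$. Your route tests only on balls and then appeals to ``duality between Orlicz--Bessel capacity and Wolff-testing measures'' plus a Besicovitch covering; neither step is justified, and the covering argument is genuinely problematic because $\text{Cap}_{{\mathbf G}_{2k},Q_{k+1}^*}$ is not additive, so ball estimates do not sum to an estimate on arbitrary compact or Borel subsets of $K$. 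To repair the proof along the paper's lines you should quote Theorem \ref{TH3} (or reprove its content via Theorems \ref{th2}--\ref{th3}) for the direct part, and Theorem \ref{TH1}, Part 2, for the converse.
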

 
 The following extension holds when $\Gw=\BBR^N$.

\begin{theorem}\label{MT4}Let $k\in \{1,2,...,\mathbb E[N/2]\}$ such that $2k<N$, $l\in \mathbb N^*$, $\beta\geq 1$ such that $l\beta>\frac{Nk}{N-2k}$ and $a>0$, $R>0$. If $\mu$ is a nonnegative Radon measure in $\mathbb{R}^N$ with $\supp (\mu)\subset B_{R}$ there exists $M>0$ depending on $N,k,l,a,\beta$ and $R$ such that if 
\begin{equation*}
||{\bf M}_{ 2k}^{\frac{{k(\beta  - 1)}}{\beta }}[\mu ]|{|_{{L^\infty }({\mathbb{R}^N})}} \le M,
\end{equation*}
and $\omega=M||{\bf M}_{2k}^{\frac{k(\beta-1)}{\beta}}[\chi_{B_R}]||_{L^\infty(\mathbb{R}^N)}^{-1} \chi_{B_R}+\mu$, then $P_{l,a,\beta}\left(2K_2{\bf W}_{\frac{2k}{k+1},k+1}[\omega ] \right)$ is integrable in $\mathbb{R}^N$ ($K_2$  as in Theorem \ref{MT3}) and the following  Dirichlet problem 
    \begin{equation}\label{MT4a}
    \left. \begin{array}{ll}
          F_k[-u] = P_{l,a,\beta}(u)+\mu ~~\text {in }\;\mathbb{R}^N,  \\ 

     \inf_{\mathbb{R}^N}u=0,  \\ 
     \end{array} \right.
    \end{equation}
     admits a nonnegative  solution $u$ with $-u\in \Phi^k(\mathbb{R}^N)$ which satisfies
     \begin{equation}\label{MT4b}
     {u(x)} \le 2 K_2{\bf W}_{\frac{2k}{k+1},k+1}[\omega ](x)\qquad\forall x\in \mathbb{R}^N.
     \end{equation}

Conversely, if \eqref{MT4a} admits a nonnegative solution $u$ with  $-u\in \Phi^k(\mathbb{R}^N)$ and $P_{l,a,\beta}(u)$ locally integrable in $\mathbb{R}^N$, then there exists a positive constant $C$ depending on $N,k, l,a,\beta$  such that there holds 
     \begin{equation}\label{MT4c}
        \int_E {P_{l,a,\beta}(u)dx}  + \mu (E) \le C \text{Cap}_{{\mathbf{I}_{2k}},Q_{k+1}^*}(E)\qquad\forall E\subset \mathbb{R}^N, E\text { Borel}. 
 \end{equation}
        where $\text{Cap}_{{\mathbf{I}_{2k}},Q_{k+1}^*}(E)$ is defined accordingly by (\ref{EQ15}).
 \end{theorem}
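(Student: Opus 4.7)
The plan is to adapt the bounded-domain argument of Theorem \ref{MT3} to the whole space, substituting the bounded-domain $k$-Hessian existence result with its $\mathbb{R}^N$ counterpart (Theorem \ref{TH5}), and exploiting the compact support of $\mu$ to control the decay of the Wolff potential at infinity.

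First I would establish the global integrability of the majorant $P_{l,a,\beta}(2K_2\mathbf{W}_{\frac{2k}{k+1},k+1}[\omega])$. Since $\supp\omega\subset B_R$, a direct computation gives
\[
\mathbf{W}_{\frac{2k}{k+1},k+1}[\omega](x)\le C_{N,k}\,\omega(B_R)^{1/k}|x|^{-(N-2k)/k}\quad\text{for }|x|\ge 2R,
\]
so the dominant $l\beta$-power term of $P_{l,a,\beta}$ is integrable outside a large ball exactly when $l\beta(N-2k)/k>N$, i.e.\ under the sharp hypothesis $l\beta>Nk/(N-2k)$. Inside $B_{2R}$, the uniform bound on $\mathbf{M}_{2k}^{k(\beta-1)/\beta}[\mu]$ combined with the smallness of $M$ yields a Moser--Trudinger type integrability via the Orlicz embedding used in Theorem \ref{MT3}.

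Next I would build an approximating sequence by exhaustion: for $n\ge R$, apply Theorem \ref{MT3} on $B_{2n}$ with boundary datum $\varphi\equiv 0$ and the measure $\mu$ (truncated if necessary), producing a solution $u_n$ satisfying $u_n\le 2K_2\mathbf{W}^{4n}_{\frac{2k}{k+1},k+1}[\omega]$, extended by $0$ outside $B_{2n}$. Comparison for $k$-subharmonic functions in the sense of Trudinger--Wang allows the sequence to be chosen nondecreasing. The global majorant from the previous step yields dominated convergence, hence $P_{l,a,\beta}(u_n)\to P_{l,a,\beta}(u)$ in $L^1_{loc}(\mathbb{R}^N)$; combined with the weak continuity of $F_k$ on monotone sequences of $k$-convex functions, one obtains $F_k[-u_n]\rightharpoonup F_k[-u]$ as Radon measures, so $u$ solves \eqref{MT4a} with $-u\in\Phi^k(\mathbb{R}^N)$ and satisfies \eqref{MT4b}. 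The condition $\inf_{\mathbb{R}^N}u=0$ follows from the decay estimate above, since $\omega$ has compact support.

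For the converse capacity inequality \eqref{MT4c}, I would test the pointwise Wolff bound for $u$ against admissible densities $f\ge 0$ with $\mathbf{I}_{2k}\ast f\ge\chi_E$, then invoke the Orlicz duality between $Q_{k+1}$ and $Q_{k+1}^*$ to control $\int_E P_{l,a,\beta}(u)\,dx+\mu(E)$ by $C\int_{\mathbb{R}^N}Q_{k+1}^*(f)\,dx$, exactly as in the converse of Theorem \ref{MT3} but with the Riesz kernel $\mathbf{I}_{2k}$ (natural in $\mathbb{R}^N$) in place of the Bessel kernel $\mathbf{G}_{2k}$; taking the infimum over admissible $f$ yields \eqref{MT4c}. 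The main obstacle I expect is ensuring dominated convergence of the reaction term across the exhaustion uniformly on all of $\mathbb{R}^N$: this is where the compact support of $\mu$ and the sharp exponent $l\beta>Nk/(N-2k)$ enter essentially, and where the argument genuinely differs from the bounded-domain case.
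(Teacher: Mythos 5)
Your overall architecture (approximation, uniform Wolff majorant, weak continuity of $F_k$, and a Theorem \ref{TH1}-type converse) is in the right spirit, and the converse part is essentially the paper's argument: the lower Wolff bound from Theorem \ref{07065} combined with Theorem \ref{TH1}, Part 1, (ii). But the construction of the solution has two genuine gaps. First, your exhaustion by balls $B_{2n}$ relies on "comparison for $k$-subharmonic functions" to make the sequence $u_n$ nondecreasing; this is not available here, because the equation has a \emph{source} nonlinearity $P_{l,a,\beta}(u)$, for which comparison and uniqueness fail in general. The monotonicity invoked in the paper (via \cite{PhVe2}) is of a different nature: it is for a Picard-type iteration $F_k[-u_{m+1}]=P_{l,a,\beta}(u_m)+\mu$, where the data on the right-hand side are monotone in $m$, not for solutions of the full nonlinear problem on nested domains. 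The paper works directly in $\mathbb{R}^N$, using Theorem \ref{07065} to solve each linear-in-the-data step with $\inf_{\mathbb{R}^N}u_m=0$, so no exhaustion is needed.

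Second, and more seriously, the uniform bound $u_n\le 2K_2{\bf W}^{4n}_{\frac{2k}{k+1},k+1}[\omega]$ that you extract from Theorem \ref{MT3} on $B_{2n}$ does not survive the limit $n\to\infty$: the smallness threshold $M_1$ in Theorem \ref{MT3} depends on $\mathrm{diam}(\Omega)=4n$ and may degenerate, and the measure $\omega$ appearing in that bound carries a constant background term spread over the whole space, whose untruncated Wolff potential diverges; the whole-space statement requires the compactly supported background $\chi_{B_R}$, which is precisely what Theorems \ref{th3} and \ref{TH3-B} are designed to handle. Your integrability computation for the majorant (decay of ${\bf W}[\omega]$ like $|x|^{-(N-2k)/k}$ at infinity, exponential integrability near $B_R$) reproduces only part of Theorem \ref{th3}; the missing and essential ingredient is the self-improving estimate ${\bf W}_{\frac{2k}{k+1},k+1}\bigl[H_l\bigl(\delta_1({\bf W}_{\frac{2k}{k+1},k+1}[\omega])^{\beta}\bigr)\bigr]\le C\,{\bf W}_{\frac{2k}{k+1},k+1}[\omega]$, which is what closes the induction and yields a bound on the approximations that is uniform in the iteration index (this is where $l\beta>\frac{Nk}{N-2k}$ and the smallness of $M$ actually enter). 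Knowing merely that the majorant $P_{l,a,\beta}(2K_2{\bf W}[\omega])$ is integrable does not by itself show that the $u_n$ stay below it, so your appeal to dominated convergence is circular as written; you need the analogue of Theorem \ref{TH3-B} (with $f\equiv 0$) to obtain \eqref{1233} before passing to the limit via monotone convergence and Theorem \ref{TH6}.
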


The four previous theorems are connected  to the following results which deals with a class of nonlinear {\it Wolff integral equations}.

  \begin{theorem}\label{MT5}
    Let $\alpha>0$, $p>1$, $a>0$, $\varepsilon>0$, $R>0$, $l\in \mathbb{N}^*$ and $\beta\geq 1$ such that  $l\beta>p-1$ and $0<\alpha p <N$. Let $f$ be a nonnegative measurable in $\mathbb{R}^N$ with the property that $\mu_1=P_{l,a+\varepsilon,\beta}(f)$ is locally integrable in $\mathbb{R}^N$ and  $\mu\in \mathfrak{M}^+(\mathbb{R}^N)$.
There exists $M>0$ depending on $N,\alpha,p,l,a,\beta,\varepsilon$ and $R$ such that if 
\begin{equation}\label{14061}||{\bf M}_{\alpha p,R}^{\frac{{(p - 1)(\beta  - 1)}}{\beta }}[\mu ]|{|_{{L^\infty }({\mathbb{R}^N})}} \le M~~\text{ and }~~ ||{\bf M}_{\alpha p,R}^{\frac{{(p - 1)(\beta  - 1)}}{\beta }}[\mu_1 ]|{|_{{L^\infty }({\mathbb{R}^N})}} \le M,
\end{equation}
     then there exists a nonnegative function $u$ such that   $P_{l,a,\beta}(u)$ is locally integrable in $\mathbb{R}^N$ which satisfies
  \begin{equation}\label{MT5a}
   u = {\bf W}_{\alpha ,p}^{R}[P_{l,a,\beta}(u)+\mu] + f ~\textrm{ in }~\mathbb{R}^N,
  \end{equation} 
  and
  \begin{equation}\label{MT5b}{u} \le F:=2c_p {\bf W}_{\alpha ,p}^{R}[\omega_1 ] + 2c_p{\bf W}_{\alpha,p}^{R}[\omega_2]+ f,~~~ {P_{l,a,\beta}\left(F\right) \in L^1_{loc}(\mathbb{R}^N)},
      \end{equation}
  where $\omega_1=M||{\bf M}_{\alpha p,R}^{\frac{(p-1)(\beta-1)}{\beta}} [1] ||^{-1}_{{L^\infty }(\mathbb{R}^N)}+\mu$ and $\omega_2=M||{\bf M}_{\alpha p,R}^{\frac{(p-1)(\beta-1)}{\beta}} [1] ||^{-1}_{{L^\infty }(\mathbb{R}^N)}+\mu_1$.
  
   Conversely, if \eqref{MT5a} admits a nonnegative solution $u$ and $P_{l,a,\beta}(u)$ is locally integrable in $\mathbb{R}^N$, then there exists a positive constant $C$ depending on $N,\alpha,p,l,a,\beta$ and $R$ such that there holds
   \begin{equation}\label{MT5c}
   \int_E P_{l,a,\beta}(u)dx+ \int_EP_{l,a+\varepsilon,\beta}(f)dx+\mu(E)  \le C\text{Cap}_{{\mathbf{G}_{\alpha p}},Q_p^*}(E)\quad\forall E\subset \mathbb{R}^N,\, E\text{ Borel}.
   \end{equation}
      \end{theorem}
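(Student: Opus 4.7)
The overall strategy for existence is monotone iteration from below with an explicit pointwise barrier. Define the operator $T[v] := \mathbf{W}_{\alpha,p}^R[P_{l,a,\beta}(v)+\mu] + f$ and set $u_0 := f$, $u_{n+1} := T[u_n]$. Because $\mathbf{W}_{\alpha,p}^R$ and $P_{l,a,\beta}$ are both non-decreasing in their arguments, $u_1 \geq u_0$ implies $u_n \nearrow u$ pointwise by induction. The crux is to exhibit the barrier $F$ from \eqref{MT5b} satisfying $T[F] \leq F$; then $u_n \leq F$ for all $n$, and monotone convergence lets us pass to the limit inside the Wolff potential to obtain a solution to \eqref{MT5a}. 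Local integrability of $P_{l,a,\beta}(u)$ then follows from that of $P_{l,a,\beta}(F)$, which itself is a consequence of the same exponential-integrability estimate used to construct $F$.

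To verify $T[F] \leq F$, the plan is to exploit the Moser--Trudinger-type inequality for Wolff potentials (established earlier in the paper under the smallness hypothesis \eqref{14061}) of the form
\begin{equation*}
\mathbf{W}_{\alpha,p}^R\!\left[P_{l,a',\beta}\!\bigl(2c_p\,\mathbf{W}_{\alpha,p}^R[\nu]\bigr)\right](x) \leq c_p\,\mathbf{W}_{\alpha,p}^R[\nu](x), \qquad a' \leq a+\varepsilon,
\end{equation*}
valid whenever $\|\mathbf{M}_{\alpha p,R}^{(p-1)(\beta-1)/\beta}[\nu]\|_{\infty} \leq M$ with $M$ sufficiently small. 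Writing $F = F_1 + F_2 + f$ with $F_i := 2c_p\,\mathbf{W}_{\alpha,p}^R[\omega_i]$, the elementary bound $(x+y+z)^\beta \leq 3^{\beta-1}(x^\beta+y^\beta+z^\beta)$ combined with the convexity splitting $e^{s_1+s_2+s_3} \leq \tfrac{1}{3}(e^{3s_1}+e^{3s_2}+e^{3s_3})$ yields
\begin{equation*}
P_{l,a,\beta}(F) \leq C\bigl(P_{l,A,\beta}(F_1) + P_{l,A,\beta}(F_2) + P_{l,A,\beta}(f)\bigr), \qquad A = 3^{\beta}a.
\end{equation*}
The role of the $\varepsilon$-gap in $\omega_2 \geq \mu_1 = P_{l,a+\varepsilon,\beta}(f)$ is now decisive: after choosing $\varepsilon$ so that $A \leq a+\varepsilon$, the third term is pointwise bounded by $\omega_2$ and gets absorbed into $\mathbf{W}_{\alpha,p}^R[\omega_2]$. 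Applying the Moser--Trudinger inequality separately to $\nu = \omega_1$ and $\nu = \omega_2$ and summing gives $T[F] \leq F$.

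For the converse, assume $u$ solves \eqref{MT5a} with $P_{l,a,\beta}(u)$ locally integrable. The equation itself yields the pointwise lower bounds $u \geq f$ and $u \geq \mathbf{W}_{\alpha,p}^R[P_{l,a,\beta}(u)+\mu]$. I would then control $\text{Cap}_{\mathbf{G}_{\alpha p},Q_p^*}$ via the dual characterization \eqref{EQ14} and the Orlicz capacitary inequality for Wolff potentials
\begin{equation*}
\int_{\mathbb{R}^N} Q_p\!\bigl(\lambda\,\mathbf{W}_{\alpha,p}^R[\nu]\bigr)\,dx \leq C\,\nu(\mathbb{R}^N)
\end{equation*}
for small $\lambda$, which is the dual companion of the Moser--Trudinger estimate above. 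Testing against a capacitary admissible function for $E$ and using that $Q_p$ is essentially the Legendre conjugate of $Q_p^*$ adapted to $P_{l,a,\beta}$ through the series in \eqref{EQ13} converts the mass $P_{l,a,\beta}(u)\,dx + \mu$ on $E$ into the capacity $\text{Cap}_{\mathbf{G}_{\alpha p},Q_p^*}(E)$. The term $\int_E P_{l,a+\varepsilon,\beta}(f)\,dx$ is dominated by a power of $\int_E P_{l,a,\beta}(u)\,dx$ thanks to $f \leq u$ and the strict exponent gap $\varepsilon$, which upgrades the integrability on the set where $u$ is large and reduces the estimate to the previous one.

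The hardest step is undoubtedly the barrier verification $T[F] \leq F$. The main obstacle is the tight book-keeping of constants: the Hölder splitting inflates the exponential parameter from $a$ to $3^\beta a$, while the Moser--Trudinger bound requires this inflated parameter to remain below $a+\varepsilon$ in the $\omega_2$-term for pointwise absorption; at the same time $M$ must be chosen small enough for both copies of the inequality (applied with the inflated constants) to hold. Balancing these simultaneously is the technical heart of the argument, and it is precisely where the hypothesis $l\beta > p-1$ intervenes to guarantee convergence of the series $Q_p$ in \eqref{EQ13} at the relevant arguments.
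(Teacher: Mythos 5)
Your overall scheme -- monotone iteration $u_0=f$, $u_{n+1}=\mathbf{W}_{\alpha,p}^R[P_{l,a,\beta}(u_n)+\mu]+f$, together with a uniform majorant of the form $2c_p\mathbf{W}_{\alpha,p}^R[\omega_1]+2c_p\mathbf{W}_{\alpha,p}^R[\omega_2]+f$ -- is the same as the paper's (its Theorem \ref{TH3} supplies exactly this bound), but your verification of the absorption step $T[F]\le F$ has a genuine gap. Your splitting $P_{l,a,\beta}(F)\le C\big(P_{l,3^{\beta}a,\beta}(F_1)+P_{l,3^{\beta}a,\beta}(F_2)+P_{l,3^{\beta}a,\beta}(f)\big)$ inflates the coefficient on the $f$-term to $3^{\beta}a$, and you then propose to ``choose $\varepsilon$ so that $3^{\beta}a\le a+\varepsilon$''. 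That choice is not available: $\varepsilon$ is a datum of the theorem (it is for this given $\varepsilon$ that $\mu_1=P_{l,a+\varepsilon,\beta}(f)$ is assumed locally integrable and has small fractional maximal potential), and only $M$ may be tuned; no smallness of $M$ controls $P_{l,3^{\beta}a,\beta}(f)$, which need not even be locally integrable when $3^{\beta}a>a+\varepsilon$. The paper's Theorem \ref{TH3} avoids this by a \emph{weighted} convexity splitting: it writes $4c_pK\mathbf{W}_{\alpha,p}^R[\omega_1]+4c_pK\mathbf{W}_{\alpha,p}^R[\omega_2]+f$ as a convex combination with weight $(a/(a+\varepsilon))^{1/\beta}$ on $f$ and small weights $c_{a,\varepsilon}^{-1}$ on the two Wolff terms, so that the $f$-contribution is exactly $P_{l,a,\beta}\big((1+\varepsilon/a)^{1/\beta}f\big)=P_{l,a+\varepsilon,\beta}(f)=\mu_1\le\omega_2$, while the Wolff terms acquire the large coefficient $\overline a=a(4c_{a,\varepsilon}c_pK)^{\beta}$; that large coefficient is then absorbed by taking $M$ small depending on $\varepsilon$, via $H_l(\theta^{-1}s)\ge\theta^{-l}H_l(s)$ and Theorem \ref{th2}. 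Note that $l\beta>p-1$ is used precisely there (positivity of the exponents in the choice of $M$, e.g.\ $\frac{1}{2(p-1)}(\frac{l\beta}{p-1}-1)>0$), not for convergence of the series $Q_p$, which converges for every $s$; likewise your Moser--Trudinger-type inequality should carry no restriction ``$a'\le a+\varepsilon$'' -- it holds for any coefficient once $M$ is small depending on that coefficient.

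The converse also leans on a statement that is false in the generality in which you invoke it: $\int_{\mathbb{R}^N}Q_p(\lambda\,\mathbf{W}_{\alpha,p}^R[\nu])\,dx\le C\,\nu(\mathbb{R}^N)$ cannot hold for all $\nu\in\mathfrak{M}^+(\mathbb{R}^N)$ and small $\lambda$ (for $\nu=\delta_{x_0}$ one has $\mathbf{W}_{\alpha,p}^R[\nu](x)\approx|x-x_0|^{-\frac{N-\alpha p}{p-1}}$ near $x_0$, so $Q_p(\lambda\,\mathbf{W}_{\alpha,p}^R[\nu])$ is not locally integrable for any $\lambda>0$). The paper's Theorem \ref{TH1} proves instead the localized bound $\int P_{l,a,\beta}(c\,\mathbf{W}_{\alpha,p}^R[\omega_E])\,dx\le C\,\omega(E)$ with $\omega=P_{l,a,\beta}(u)\,dx+\mu$ and $\omega_E=\chi_E\omega$, which crucially uses the supersolution inequality $u\ge \mathbf{W}_{\alpha,p}^R[\omega]$, Fefferman's theorem for the $\omega$-maximal function, and the pointwise bound $\mathbf{W}_{\alpha,p}^R[\omega_E]\le (M_\omega\chi_E)^{1/(p-1)}\mathbf{W}_{\alpha,p}^R[\omega]$, before passing to Bessel potentials (Theorem \ref{VHVth1}) and concluding by Young's inequality and the $\Delta_2$ property of $Q_p^*$. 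Finally, your claim that $\int_E P_{l,a+\varepsilon,\beta}(f)\,dx$ is ``dominated by a power of $\int_E P_{l,a,\beta}(u)\,dx$ thanks to $f\le u$'' does not stand as written: $f\le u$ only yields $P_{l,a+\varepsilon,\beta}(f)\le P_{l,a+\varepsilon,\beta}(u)$, whose exponential coefficient is larger than the one you control; handling this term requires exploiting the full identity $u=\mathbf{W}_{\alpha,p}^R[P_{l,a,\beta}(u)+\mu]+f$, and that step is missing from your argument.
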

      
      When $R=\infty$ in the above theorem, we have a similar result provided $f$ and $\gm$ have compact support in $\BBR^N$.
     \begin{theorem}\label{MT6}  Let $\alpha>0$, $p>1$, $a>0$, $\varepsilon>0$, $R>0$, $l\in \mathbb{N}^*$ and $\beta\geq 1$ such that $0<\alpha p <N$ and $l\beta >\frac{N(p-1)}{N-\alpha p}$. There exists $M>0$ depending on $N,\alpha,p,l,a,\beta,\varepsilon$ and $R$ such that if $f$ is a nonnegative measurable function in $\mathbb{R}^N$ with support in  $B_{R}$ such that  $\mu_1=P_{l,a+\varepsilon,\beta}(f)$ is locally integrable in $\mathbb{R}^N$ and  $\mu$ is a positive measure in $\mathbb{R}^N$ with support in $B_{R}$ which verify
     \begin{equation}
     \label{14062}||{\bf M}_{\alpha p}^{\frac{{(p - 1)(\beta  - 1)}}{\beta }}[\mu ]|{|_{{L^\infty }({\mathbb{R}^N})}} \le M~~\text{ and }~~ ||{\bf M}_{\alpha p}^{\frac{{(p - 1)(\beta  - 1)}}{\beta }}[\mu_1 ]|{|_{{L^\infty }({\mathbb{R}^N})}} \le M,
     \end{equation}
        then there exists a nonnegative function $u$ such that   $P_{l,a,\beta}(u)$ is integrable in $\mathbb{R}^N$ which satisfies
     \begin{equation}\label{MT5d}
      u = {\bf W}_{\alpha ,p}[P_{l,a,\beta}(u)+\mu] + f ~\textrm{ in }~\mathbb{R}^N,
     \end{equation} 
     and
     \begin{equation}\label{MT5e}{u} \le F:=2c_p {\bf W}_{\alpha ,p}[\omega_1 ] + 2c_p{\bf W}_{\alpha,p}[\omega_2]+ f,~~~ {P_{l,a,\beta}\left(F\right) \in L^1(\mathbb{R}^N)},
         \end{equation}
     where $\omega_1=M||{\bf M}_{\alpha p}^{\frac{(p-1)(\beta-1)}{\beta}} [\chi_{B_{R}}] ||^{-1}_{{L^\infty }(\mathbb{R}^N)}\chi_{B_{R}}+\mu$ and $\omega_2=M||{\bf M}_{\alpha p}^{\frac{(p-1)(\beta-1)}{\beta}} [\chi_{B_{R}}] ||^{-1}_{{L^\infty }(\mathbb{R}^N)}\chi_{B_{R}}+\mu_1.$

      Conversely, if \eqref{MT5d} admits a nonnegative solution $u$ such that $P_{l,a,\beta}(u)$ is integrable in $\mathbb{R}^N$, then there exists a positive constant $C$ depending on $N,\alpha,p,l,a,\beta$  such that there holds
      \begin{equation}
      \label{MT5f}\int_E P_{l,a,\beta}(u)dx+\int_E P_{l,a,\beta}(f)dx+\mu(E)  \le C\text{Cap}_{{\mathbf{I}_{\alpha p}},Q_p^*}(E)\quad\forall E\subset \mathbb{R}^N,\, E\text{ Borel}.
      \end{equation}
   \end{theorem}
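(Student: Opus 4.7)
My plan is to run the same fixed-point scheme that yields \rth{MT5}, but with the untruncated Wolff potential $\mathbf{W}_{\alpha,p}$ in place of $\mathbf{W}_{\alpha,p}^R$, exploiting the new hypothesis that $\mu$ and $f$ are supported in $B_R$. The strengthened integrability threshold $l\beta>\frac{N(p-1)}{N-\alpha p}$ (instead of $l\beta>p-1$) is precisely what is needed to promote the \emph{local} integrability of $P_{l,a,\beta}(F)$ provided by the \rth{MT5} argument to the \emph{global} integrability required here.

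\textbf{Existence.} Set $F:=2c_p\mathbf{W}_{\alpha,p}[\omega_1]+2c_p\mathbf{W}_{\alpha,p}[\omega_2]+f$ and consider the operator $T[v]:=\mathbf{W}_{\alpha,p}[P_{l,a,\beta}(v)+\mu]+f$ on the convex set $\mathcal{K}=\{v\in L^1_{\mathrm{loc}}(\mathbb{R}^N):0\le v\le F\}$. The pointwise exponential/Wolff estimate underlying \rth{MT5}, which depends only on the smallness of $\mathbf{M}_{\alpha p}^{(p-1)(\beta-1)/\beta}[\mu]$ and $\mathbf{M}_{\alpha p}^{(p-1)(\beta-1)/\beta}[\mu_1]$ encoded in \eqref{14062}, controls $P_{l,a,\beta}(F)+\mu$ in the Wolff-potential sense by $C(\omega_1+\omega_2)$ and yields $T(\mathcal{K})\subset\mathcal{K}$. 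Dominated convergence with dominating function $P_{l,a,\beta}(F)+\mu$ gives continuity and compactness of $T$ on $\mathcal{K}$; Schauder's theorem then produces a fixed point $u$ satisfying \eqref{MT5d} with $u\le F$.

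\textbf{Global integrability of $P_{l,a,\beta}(F)$.} This is the main new ingredient. Since $\omega_1,\omega_2,f$ are supported in $\overline{B_R}$, for $|x|\ge 2R$ one has $\omega_i(B_t(x))=\omega_i(\mathbb{R}^N)$ whenever $t\ge|x|+R$, whence
\[
\mathbf{W}_{\alpha,p}[\omega_i](x)\le C\,\omega_i(\mathbb{R}^N)^{1/(p-1)}|x|^{-(N-\alpha p)/(p-1)},\qquad F(x)\le C|x|^{-(N-\alpha p)/(p-1)}.
\]
Since $P_{l,a,\beta}(s)\le C s^{l\beta}$ for $s$ bounded, the tail integral $\int_{\{|x|\ge 2R\}}P_{l,a,\beta}(F)\,dx$ converges exactly when $l\beta(N-\alpha p)/(p-1)>N$, i.e.\ under the threshold of the theorem. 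The local integrability on $B_{2R}$ is inherited by comparison with the truncated estimate of \rth{MT5} applied to any $R'>3R$, using $\mathbf{W}_{\alpha,p}[\omega_i]\le\mathbf{W}_{\alpha,p}^{R'}[\omega_i]+C$ on $B_{2R}$.

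\textbf{Converse and main obstacle.} For the converse, given a solution $u$ of \eqref{MT5d} with $P_{l,a,\beta}(u)\in L^1(\mathbb{R}^N)$, one has pointwise $u\ge\mathbf{W}_{\alpha,p}[P_{l,a,\beta}(u)]$, $u\ge\mathbf{W}_{\alpha,p}[\mu]$ and $u\ge f$. Testing against any admissible function in the definition of $\text{Cap}_{\mathbf{I}_{\alpha p},Q_p^*}$ and invoking the duality between Orlicz norms built on $Q_p$ and their complementary capacities—the same Adams-type argument used for the converse half of \rth{MT5}, which transfers verbatim to the Riesz kernel $\mathbf{I}_{\alpha p}$—bounds each of the three quantities $\int_E P_{l,a,\beta}(u)\,dx$, $\int_E P_{l,a,\beta}(f)\,dx$ and $\mu(E)$ by $C\,\text{Cap}_{\mathbf{I}_{\alpha p},Q_p^*}(E)$, proving \eqref{MT5f}. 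The delicate point is ensuring that the constants in the asymptotic decay of Step~2 remain quantitatively compatible with the smallness thresholds \eqref{14062}, so that the self-map property of Step~1 genuinely survives the passage from the truncated to the untruncated Wolff potential; once this accounting is done, the remainder is a faithful transcription of the proof of \rth{MT5}.
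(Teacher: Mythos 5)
Your converse is fine and follows the paper's own route: it is exactly Theorem \ref{TH1}, Part 1, (ii), and since $u={\bf W}_{\alpha,p}[P_{l,a,\beta}(u)+\mu]+f\ge f$, the term $\int_E P_{l,a,\beta}(f)\,dx$ is absorbed into $\int_E P_{l,a,\beta}(u)\,dx$. The existence half, however, has a genuine gap at its crux, and you concede it yourself in your closing sentence. The invariance $T(\mathcal{K})\subset\mathcal{K}$ (equivalently, in the paper's scheme, the fact that the monotone iterates $u_{m+1}={\bf W}_{\alpha,p}[P_{l,a,\beta}(u_m)+\mu]+f$ stay below $F$) requires the \emph{pointwise} absorption inequality ${\bf W}_{\alpha,p}[P_{l,a,\beta}(F)+\mu]+f\le F$, which after the convexity splitting reduces to an estimate of the form ${\bf W}_{\alpha,p}\bigl[H_l\bigl(\bar a\,({\bf W}_{\alpha,p}[\omega_i])^{\beta}\bigr)\bigr]\le {\bf W}_{\alpha,p}[\omega_i]$, $i=1,2$, with the constant brought down to $1$ by the smallness of $M$. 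You assert this is supplied by ``the pointwise exponential/Wolff estimate underlying Theorem \ref{MT5}'', but that estimate (Theorem \ref{th2}) concerns the truncated potential ${\bf W}^R_{\alpha,p}$ and the full exponential, and it cannot transfer to $R=\infty$: the integrand $\exp\bigl(\delta_0({\bf W}^R_{\alpha,p}[\omega])^{\beta}\bigr)$ there is bounded below by a positive constant on all of $\mathbb{R}^N$, and the untruncated Wolff potential of any such function is identically $+\infty$. The untruncated case needs a genuinely different estimate, namely Theorem \ref{th3} (and its consequence Theorem \ref{TH3-B}), whose proof splits ${\bf W}_{\alpha,p}[\omega]$ into head and tail parts, uses the local exponential integrability of Theorem \ref{VHVth2} near $B_R$, and exploits the compact support of $\omega_i$ together with $l\beta>\frac{N(p-1)}{N-\alpha p}$ to control the region at infinity, yielding the pointwise bound \eqref{04063}.

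Your Step 2 (the decay $F(x)\le C|x|^{-\frac{N-\alpha p}{p-1}}$ for $|x|\ge 2R$, hence $P_{l,a,\beta}(F)\in L^1(\mathbb{R}^N)$ precisely when $l\beta>\frac{N(p-1)}{N-\alpha p}$) is correct and is indeed one ingredient of that argument, but mere global integrability of $P_{l,a,\beta}(F)$ does not yield the pointwise inequality ${\bf W}_{\alpha,p}[P_{l,a,\beta}(F)]\le C\bigl({\bf W}_{\alpha,p}[\omega_1]+{\bf W}_{\alpha,p}[\omega_2]\bigr)$, let alone with the sharp constant needed for the self-map property; without it, neither Schauder nor the monotone iteration gets off the ground, since nothing prevents the iterates (or $T[v]$ for $v\in\mathcal{K}$) from exceeding $F$. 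This ``accounting'', which you explicitly defer, is not a routine transcription of the Theorem \ref{MT5} proof but the actual new content of Theorem \ref{MT6}, carried in the paper by Theorems \ref{th3} and \ref{TH3-B}; as it stands, your proposal assumes it rather than proves it.
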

   
As an application of the Wolff integral equation we can notice that $\alpha=1$, equation (\ref{MT5d}) is equivalent to 
 $$-\Gd_p(u-f)=P_{l,a,\beta}(u)+\mu\qquad\text{in }\BBR^N.
 $$
When $\alpha=\frac{2k}{k+1}$ and $p=k+1$, it is equivalent to
  $$F_k[-u+f]=P_{l,a,\beta}(u)+\mu\qquad\text{in }\BBR^N.
 $$
 If $p=2$ equation (\ref{MT5d}) becomes linear. If we set $\gamma=2\ga$, then 
 $$\BA{ll}
 {\bf W}_{\alpha ,2}[\omega](x)=\myint{0}{\infty}\omega(B_t(x))\myfrac{dt}{t^{N-\gamma+1}}\\[4mm]\phantom{{\bf W}_{\gamma ,2}[\omega](x)}=\myint{\BBR^N}{}\left(\myint{\abs{x-y}}{\infty}\myfrac{dt}{t^{N-\gamma+1}}\right)d\gm(y)
 \\[4mm]\phantom{{\bf W}_{\gamma ,2}[\omega](x)}
 =\frac{1}{N-\gamma}\myint{\BBR^N}{}\myfrac{d\omega(y)}{|x-y|^{N-\gamma}}\\[4mm]\phantom{{\bf W}_{\gamma ,2}[\omega](x)}=I_{\gamma}\ast\omega,
 \EA$$
where $I_\gamma$ is the Riesz kernel of order $\gamma$. Thus (\ref{MT5d}) is equivalent to 
  $$(-\Gd)^{\alpha}(u-f)=P_{l,a,\beta}(u)+\mu\qquad\text{in }\BBR^N.
 $$
 \begin{remark}
 In case $\Omega$ is a bounded open set, uniformly bounded of sequence $\{u_n\}$ \eqref{1232} is essential  for the existence of solutions of equations \eqref{MT1a}, \eqref{MT3a} and \eqref{MT5a}. Moreover, conditions $l\beta>p-1$ in Theorem \ref{MT1},  \ref{MT5} and  $l\beta>k$ in Theorem \ref{MT3} is necessary so as to get \eqref{1232} from iteration schemes  \eqref{ite-1}. Besides, in case $\Omega=\mathbb{R}^N$, equation  \eqref{MT2a} in Theorem \ref{MT2} $($ \eqref{MT4a} in Theorem \ref{MT4}, \eqref{MT5d} in Theorem \ref{MT6} resp.$)$ has nontrivial solution on $\mathbb{R}^N$ if and only if  $l\beta>\frac{N(p-1)}{N-p}$ $($ $l\beta>\frac{Nk}{N-2k}$,  $l\beta>\frac{N(p-1)}{N-\alpha p}$ resp.$)$. In fact, here we only need to consider equation \eqref{MT2a}. Assume that $l\beta\leq \frac{N(p-1)}{N-p}$, using Holder inequality we have
 $P_{l,a,\beta}(u)\geq c u^\gamma$ where $p-1<\gamma\leq \frac{N(p-1)}{N-p}$, so we get from Theorem \eqref{TH4}.
 \begin{equation*}
 u\geq K {\bf W}_{1,p}[cu^\gamma+\mu] ~~\text{ in } \mathbb{R}^N
 \end{equation*}
 for some constant $K$. 
 Therefore, we can verify that 
\begin{equation*}
\int_E u^\gamma dx+\mu(E)  \le C \text{Cap}_{{\mathbf{I}_{p}},\frac{\gamma}{\gamma-p+1}}(E)\qquad\forall E\subset \mathbb{R}^N,\, E\text{ Borel}.
\end{equation*}  
see Theorem \ref{TH1}, where $C$ is a constant and $\text{Cap}_{\mathbf{I}_p,\frac{\gamma}{\gamma-p+1}}$ is  a Riesz capacity. \\
Since $N\leq \frac{p \gamma}{\gamma-p+1}$ $ (\Leftrightarrow p-1<\gamma\leq \frac{N(p-1)}{N-p})$, $\text{Cap}_{\mathbf{I}_p,\frac{\gamma}{\gamma-p+1}}(E)=0$ for all Borel set $E$, see \cite{AH}. \\
Immediately, we deduce  $u \equiv 0$ and $\mu\equiv 0$. 
 \end{remark}
  
\section{Estimates on potentials and Wolff integral equations} 
We denote by $B_r(a)$ the ball of center $a$ and radius $r>0$, $B_r=B_r(0)$ and by $\chi_{E}$ the characteristic function of a set $E$. The next estimates are crucial in the sequel.

\begin{theorem}Let $\alpha>0$, $p>1$ such that $0<\alpha p<N$. \\\label{VHVth1}\textbf{1.} 
There exists a positive constant $c_1$, depending  only  on $N,\alpha,p$ such that  for all $\mu\in \mathfrak{M}^+(\mathbb{R}^N)$ and  $q\geq p-1$, $0<R\le \infty$ we have
\begin{equation}\label{ine0}
(c_1q)^{ - \frac{q}{p - 1}}\int_{\mathbb{R}^N} \left( {\bf I}_{\alpha p}^R[\mu ](x) \right)^{\frac{q}{p - 1}}dx
 \leq \int_{\mathbb{R}^N} \left( {\bf W}_{\alpha ,p}^R[\mu ](x) \right)^{q}dx 
\leq (c_1q)^q\int_{\mathbb{R}^N} \left( {\bf I}^R_{\alpha p}[\mu ](x) \right)^{\frac{q}{p - 1}}dx,
\end{equation}

\noindent\textbf{2.} Let $R>0$. There exists a positive constant $c_2$, depending  only  on $N,\alpha,p, R$ such that  for all $\mu\in \mathfrak{M}^+(\mathbb{R}^N)$ and  $q\geq p-1$ we have
\begin{equation}\label{ine2}
(c_2q)^{ - \frac{q}{p - 1}}\int_{\mathbb{R}^N} \left( {\bf G}_{\alpha p}[\mu ](x) \right)^{\frac{q}{p - 1}}dx
 \leq \int_{\mathbb{R}^N} \left( {\bf W}_{\alpha ,p}^R[\mu ](x) \right)^{q}dx 
\leq (c_2q)^q\int_{\mathbb{R}^N} \left( {\bf G}_{\alpha p}[\mu ](x) \right)^{\frac{q}{p - 1}}dx,
\end{equation}
where ${\bf G}_{\alpha p}[\mu ]:=\mathbf{G}_{\alpha p}\ast\mu$ denotes the Bessel potential of order $\alpha p$ of $\mu$.\smallskip

\noindent\textbf{3.} There exists a positive constant $c_3$, depending  only  on $N,\alpha, R$ such that  for all $\mu\in \mathfrak{M}^+(\mathbb{R}^N)$ and  $q\geq 1$ we have
\begin{equation}\label{ine3}
c_3^{ - q}\int_{\mathbb{R}^N} \left({\bf G}_{\alpha }[\mu ](x) \right)^qdx  \leq \int_{\mathbb{R}^N} \left( {\bf I}_{\alpha}^R[\mu ](x) \right)^qdx  \leq c_3^q\int_{\mathbb{R}^N} \left( {\bf G}_{\alpha }[\mu ](x) \right)^qdx.
\end{equation}
\end{theorem}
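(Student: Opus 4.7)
The plan is to prove the three parts in order of increasing technical difficulty, namely Part 3, then Part 2, then Part 1 which is the core of the theorem.

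Part 3 follows from the classical pointwise comparison between the Bessel and Riesz kernels: $\mathbf{G}_\alpha(x) \sim c|x|^{\alpha-N}$ for $|x|\leq 1$, and $\mathbf{G}_\alpha(x) \leq Ce^{-c|x|}$ for $|x|\geq 1$. Splitting the integrals defining each potential according to whether $|x-y|\leq R$ or $|x-y|>R$, the local contributions are pointwise equivalent up to a universal constant, while the Bessel tail is controlled by a rapidly decaying convolution absorbed into the Riesz side via a constant depending on $R$. Minkowski's inequality in $L^q$ then yields the two-sided bound with constants independent of $q$.

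Part 2 is obtained by composing Part 1 (Wolff versus Riesz) with Part 3 (Riesz versus Bessel), applied with the index $\alpha p$ and the exponent $q/(p-1)$. Since Part 3 contributes only $q$-independent constants, the $q$-dependence of $c_2$ is inherited directly from $c_1$.

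Part 1 is the technical heart of the theorem, a quantitative form of the Maz'ya--Havin/Hedberg--Wolff equivalence. Writing $F_k(x) = \mu(B_{2^{-k}}(x))/2^{-k(N-\alpha p)}$, the dyadic identifications
\[
\mathbf{W}_{\alpha,p}^R[\mu](x)\sim \sum_{2^{-k}\leq R} F_k(x)^{1/(p-1)}, \qquad \mathbf{I}_{\alpha p}^R[\mu](x)\sim \sum_{2^{-k}\leq R}F_k(x),
\]
are the starting point. For the upper estimate, Hedberg's trick pointwise dominates $\mathbf{W}_{\alpha,p}^R[\mu](x)$ by a product of a sup-type quantity (fractional maximal function) and a power of $\mathbf{I}_{\alpha p}^R[\mu](x)^{1/(p-1)}$; after raising to the power $q$ and integrating, the decisive quantitative input is the fact that the $L^s\to L^s$ strong-type norm of the truncated fractional maximal operator grows like $s$. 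Iterating this bound through the exponent $1/(p-1)$ produces the factor $q^q$. For the lower estimate, I would test $\mathbf{I}_{\alpha p}^R[\mu]^{q/(p-1)}$ against a nonnegative $\phi$ with $\|\phi\|_{(q/(p-1))'}=1$, apply Fubini so that the $x$-integration against $|x-y|^{\alpha p -N}$ becomes a Wolff-type object acting on $\phi\,dx$, and use Hedberg's inequality in reverse.

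The main obstacle is extracting the sharp $q$-dependence of the constants. While the qualitative Wolff--Riesz equivalence is classical, the specific growth $(c_1 q)^q$ on one side and $(c_1 q)^{-q/(p-1)}$ on the other is crucial because these constants will later be summed against the coefficients $1/q!$ in the construction of the Orlicz function $Q_p$ defined in \eqref{EQ13}; any superpolynomial slack in $q$ would destroy the convergence of that series and hence invalidate the subsequent capacity estimates. This forces one to avoid any soft Marcinkiewicz interpolation and instead carry out the Minkowski and maximal-function bounds at each dyadic scale with explicitly tracked constants.
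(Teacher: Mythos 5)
Your treatment of Parts 2 and 3 is consistent with the paper: Part 2 is obtained there exactly as you propose, by composing Parts 1 and 3, and Part 3 is quoted from \cite{VHV}, your kernel-comparison-plus-Minkowski sketch being the standard argument. The genuine problem is Part 1, specifically the upper estimate. The pointwise Hedberg domination you invoke, ${\bf W}_{\alpha,p}^R[\mu]\le C\left({\bf M}_{\alpha p,R}[\mu]\right)^{\frac{2-p}{p-1}}{\bf I}_{\alpha p}^R[\mu]$, exists only for $1<p\le 2$, since the exponent $\frac{2-p}{p-1}$ must be nonnegative; for $p>2$ no pointwise bound of the form ${\bf W}\le C\,{\bf M}^{a}{\bf I}^{c}$ with $a,c\ge 0$ and $a+c=\frac{1}{p-1}$ can hold (take $\mu$ with $\mu(B_{2^{-k}}(x))\simeq 2^{-k(N-\alpha p)}$ for $k=1,\dots,n$: then ${\bf W}\simeq n$ and ${\bf I}\simeq n$ but ${\bf M}\simeq 1$, while the right-hand side is $O(n^{1/(p-1)})=o(n)$). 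Since the paper applies the theorem with $p=k+1\ge 2$ in the Hessian part, this case cannot be discarded. What actually drives the paper's proof is an integrated, not pointwise, comparison: the exponential-decay good-$\lambda$ inequality of Honzik--Jaye (\cite{HoJa}, or \cite{VHV}) between ${\bf W}_{\alpha,p}^R[\mu_n]$ and $\left({\bf M}_{\alpha p,R}[\mu_n]\right)^{1/(p-1)}$; multiplying by $qt^{q-1}$, integrating, and optimizing $\varepsilon\sim 1/q$ is precisely what produces the factors $(cq)^{\pm q}$, and the case $p=2$ of the same two inequalities (using ${\bf W}^R_{\alpha p/2,2}={\bf I}^R_{\alpha p}$) converts the fractional maximal function into the Riesz potential. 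Your proposal contains no good-$\lambda$ or other integrated substitute for this step.

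Your quantitative bookkeeping is also off. The input you call decisive, namely that the truncated fractional maximal operator has $L^s$ norm growing like $s$, is not what is needed: in the direction you use it (controlling $\|{\bf M}_{\alpha p,R}[\mu]\|_{L^s}$ by $\|{\bf I}^R_{\alpha p}[\mu]\|_{L^s}$) the constant is essentially independent of $s$ for $s$ away from $1$, via ${\bf M}_{\alpha p,R}[\mu]\lesssim M\left({\bf I}^{cR}_{\alpha p}[\mu]\right)$ and Hardy--Littlewood; the direction whose constant genuinely grows linearly in $s$ is the reverse one, $\|{\bf I}[\mu]\|_{L^s}\lesssim s\|{\bf M}[\mu]\|_{L^s}$, and neither replaces the missing Wolff-versus-maximal comparison when $p>2$. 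Moreover, with your H\"older splitting for $p<2$ the $q$-growing factor would enter with exponent $\frac{(2-p)q}{p-1}$, which exceeds $q$ when $p<3/2$, so even granting your claim you would not recover $(c_1q)^q$. Finally, the duality sketch for the lower estimate is too vague to assess; note that for $p\ge 2$ that inequality follows pointwise from subadditivity of $t\mapsto t^{1/(p-1)}$ with constant $c^q$, while for $p<2$ the paper obtains it from the same pair of maximal-function comparisons. In short, the missing ingredient in both directions is the good-$\lambda$ estimate relating ${\bf W}_{\alpha,p}^R$ to ${\bf M}_{\alpha p,R}^{1/(p-1)}$, which your outline neither states nor replaces.
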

\begin{proof} Note that  $W^R_{\frac{\alpha}{2},2}[\mu]=I^R_{\alpha}[\mu]$.
We can find proof of \eqref{ine3} in  \cite[Step 3, Theorem 2.3]{VHV}. By \cite[Step 2, Theorem 2.3]{VHV}, there is $c_4>0$ such that \begin{equation}\label{17051}
\int_{{\mathbb{R}^N}} {{{\left( {{\bf W}_{\alpha ,p}^R[\mu ](x)} \right)}^q}dx}  \geq c_4^q\int_{{\mathbb{R}^N}} {{{\left( {{{\bf M}_{\alpha p,R}}[\mu ](x)} \right)}^{\frac{q}{{p - 1}}}}dx} ~~\forall q \geq  p-1,\; 0<R\le \infty \text{ and }\; \mu\in \mathfrak{M}^+(\mathbb{R}^N).
\end{equation}
We recall that ${\bf M}_{\alpha p,R}[\mu ]={\bf M}^0_{\alpha p,R}[\mu ]$ by (\ref{EQ9}). Next we show that for all 
$q \geq  p-1$, $0<R \le \infty$ and $\mu\in \mathfrak{M}^+(\mathbb{R}^N)$ there holds
\begin{equation}\label{17052}
\int_{{\mathbb{R}^N}} {{{\left( {{{\bf M}_{\alpha p,R}}[\mu ](x)} \right)}^{\frac{q}{{p - 1}}}}dx} \; \geq {\left( {{c_5}q} \right)^{ - q}}\int_{{\mathbb{R}^N}} {{{\left( {{\bf W}_{\alpha ,p}^R[\mu ](x)} \right)}^q}dx},
\end{equation}
for some positive constant $c_5$ depending on $N,\ga,p$. Indeed, we denote $\mu_n$ by $\chi_{B_n}\mu$ for $n\in\mathbb{N}^*$. By \cite[Theorem 1.2]{HoJa} or \cite[Proposition 2.2]{VHV}, there exist  constants $c_6=c_6(N,\alpha,p)>0$, $a=a(\alpha,p)>0$ and  $\varepsilon_0=\varepsilon(N,\alpha,p)$ such that for all $n\in \mathbb{N^*}$, $t>0$, $0<R\le \infty$ and $0<\varepsilon<\varepsilon_0$, there holds
 \[\left| {\left\{ {{\bf W}_{\alpha ,p}^R{\mu _n} > 3t } \right\}} \right| \le {c_6}\exp \left( { - a{\varepsilon ^{ - 1}}} \right)\left| {\left\{ {{\bf W}_{\alpha ,p}^R{\mu _n} > t } \right\}} \right| + \left| {\left\{ {{{\left( {{{\bf M}_{\alpha p,R}}{\mu _n}} \right)}^{\frac{1}{{p - 1}}}} > \varepsilon t } \right\}} \right|.\]
Multiplying by $qt^{q-1}$ and integrating over $(0,\infty)$, we obtain 
\[\BA {ll}\displaystyle
\int_0^\infty  {q{t^{q - 1}}\left| {\left\{ {{\bf W}_{\alpha ,p}^R{\mu _n} > 3t} \right\}} \right|dt}  \le {c_6}\exp \left( { - a{\varepsilon ^{ - 1}}} \right)\int_0^\infty  {q{t^{q - 1}}\left| {\left\{ {{\bf W}_{\alpha ,p}^R{\mu _n} > t} \right\}} \right|dt}  \\[4mm]\phantom{---------------------}\displaystyle
+ \int_0^\infty  {q{t^{q - 1}}\left| {\left\{ {{{\left( {{{\bf M}_{\alpha p,R}}{\mu _n}} \right)}^{\frac{1}{{p - 1}}}} > \varepsilon t} \right\}} \right|dt},
\EA\]
which implies 
\begin{equation*}
{\varepsilon ^q}\left( {{3^{ - q}} - {c_6}\exp \left( { - a{\varepsilon ^{ - 1}}} \right)} \right)\int_{\mathbb{R}^N}{{{\left( {{\bf{W}}_{\alpha ,p}^R[{\mu _n}](x)} \right)}^q}dx}  \le \int_{{\mathbb{R}^N}} {{{\left( {{{\bf{M}}_{\alpha p,R}}{\mu _n}} \right)}^{\frac{q}{{p - 1}}}}dx}.
\end{equation*} 
We see that $\mathop {\sup }\limits_{0<\varepsilon<\varepsilon_0 }{\varepsilon ^q}\left( {{3^{ - q}} - {c_6}\exp \left( { - a{\varepsilon ^{ - 1}}} \right)} \right)\geq  (c_7 q)^{-q}$ for some constant $c_7$ which does not depend on $q$. Therefore \eqref{17052} follows by Fatou's lemma. 
Hence, it is easy to obtain \eqref{ine0} from \eqref{17051} and \eqref{17052}. At end, we obtain \eqref{ine2} from \eqref{ine0} and \eqref{ine3}. 
\end{proof}\medskip\medskip

The next result is proved in \cite{VHV}.
\begin{theorem} \label{VHVth2}
Let $\alpha>0$, $p>1$, $0\le\eta<p-1$, $0<\alpha p<N$ and $L>0$.
 Set $\delta  = \frac{1}{2}{\left( {\frac{{p - 1 - \eta }}{{12(p - 1)}}} \right)^{\frac{{p - 1}}{{p - 1 - \eta }}}}\alpha p\log (2)$. 
Then there exists $C(L)>0$, depending on 
 $N$, $\alpha$, $p$, $\eta$  and $L$ such that for any $R\in (0,\infty]$, $\mu\in\mathfrak{M}^+(\mathbb{R}^N)$,  any  $a\in\mathbb{R}^N$ and $0<r\leq L$, there holds
\begin{equation}\label{ine1}
\frac{1}{{|{B_{2r}}(a)|}}\int_{{B_{2r}}(a)} {\exp \left( {\delta \frac{{{{\left( {{\bf{W}}_{\alpha ,p}^R[{\mu _{{B_r}(a)}}](x)} \right)}^{\frac{{p - 1}}{{p - 1 - \eta }}}}}}{{||{\bf{M}}_{\alpha p,R}^\eta [{\mu _{{B_r}(a)}}]||_{{L^\infty }({B_r}(a))}^{\frac{1}{{p - 1 - \eta }}}}}} \right)dx}  \le C(L),
\end{equation}
where  $\mu_{B_{r}(a)}=\chi _{B_{r}(a)}\mu$. Furthermore, if $\eta=0$, $C$ is independent of $L$.
\end{theorem}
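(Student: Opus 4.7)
The plan is to expand the exponential as a power series and control each moment via a good-$\lambda$ inequality adapted to the $\eta$-fractional maximal potential. Set $s=(p-1)/(p-1-\eta)$ and $A=\|\mathbf{M}_{\alpha p,R}^\eta[\mu_{B_r(a)}]\|_{L^\infty(B_r(a))}^{1/(p-1-\eta)}$, and write $\mathbf{W}$ for $\mathbf{W}_{\alpha,p}^R[\mu_{B_r(a)}]$. Expanding $\exp(\delta\mathbf{W}^s/A)=\sum_{n\ge 0}\delta^n\mathbf{W}^{sn}/(n!\,A^n)$ and integrating term by term over $B_{2r}(a)$, the conclusion reduces to moment bounds of the form
\begin{equation}\label{plan-mom}
\int_{B_{2r}(a)}\mathbf{W}^{sn}\,dx\le(C_0 n^{1/s})^{sn}A^n|B_{2r}(a)|
\end{equation}
for every integer $n\ge 0$, with $C_0$ depending only on $N,\alpha,p,\eta$ (and additionally on $L$ when $\eta>0$). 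Via Stirling's inequality $n!\ge(n/e)^n$, the series $\sum_n\delta^n A^{-n}\int\mathbf{W}^{sn}/n!$ is then dominated by $|B_{2r}(a)|\sum_n(e\delta C_0^s s)^n$, which converges precisely when $\delta<1/(esC_0^s)$; the stated explicit value of $\delta$ is essentially one-half this threshold.

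To derive \eqref{plan-mom}, I would prove a good-$\lambda$ inequality of the type
\begin{equation}\label{plan-gl}
|\{\mathbf{W}>3t\}\cap B_{2r}(a)|\le c\,e^{-c'\varepsilon^{-s}}|\{\mathbf{W}>t\}\cap B_{2r}(a)|+|\{(\mathbf{M}_{\alpha p,R}^\eta[\mu_{B_r(a)}])^{1/(p-1)}>\varepsilon t\}|,
\end{equation}
valid for every $t>0$, $0<\varepsilon<\varepsilon_0(N,\alpha,p,\eta)$ and $0<r\le L$, which is the natural analogue of the good-$\lambda$ inequality used in the proof of Theorem \ref{VHVth1} (corresponding to $\eta=0$, $s=1$). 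Multiplying \eqref{plan-gl} by $qt^{q-1}$ and integrating in $t$, then choosing $\varepsilon^{-s}\sim q$ to absorb the exponential factor into the left side, produces $\|\mathbf{W}\|_{L^q(B_{2r}(a))}^q\le(C_1 q^{1/s})^q\|(\mathbf{M}^\eta)^{1/(p-1)}\|_{L^q(B_{2r}(a))}^q$. The pointwise bound $(\mathbf{M}_{\alpha p,R}^\eta[\mu_{B_r(a)}])^{1/(p-1)}\le C A^{1/s}$ on $B_{2r}(a)$---which extends from $B_r(a)$ to $B_{2r}(a)$ via the doubling property of the maximal operator together with the support constraint $\supp\mu_{B_r(a)}\subset B_r(a)$---then upgrades this to $\|\mathbf{W}\|_{L^q(B_{2r}(a))}^q\le(C_0 q^{1/s})^q A^{q/s}|B_{2r}(a)|$, which specializes at $q=sn$ to \eqref{plan-mom}.

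The main obstacle is the derivation of \eqref{plan-gl} with the sharp exponent $\varepsilon^{-s}$, since this is exactly what determines the explicit value of $\delta$ and dictates the form of the hypothesis $r\le L$. The standard recipe is a Whitney/Besicovitch covering of $\{\mathbf{W}>t\}$ and a splitting of $\mu_{B_r(a)}$ on each covering ball into a remote part (whose Wolff potential is essentially constant on the ball and is absorbed into the first right-hand term) and a local part; on the local part, the logarithmic weight $h_\eta(t)=(-\ln t)^{-\eta}\chi_{(0,1/2]}(t)$ in $\mathbf{M}^\eta$ introduces, after integration in the scale variable of the Wolff potential, a gain of the form $(-\ln\rho)^{(p-1+\eta)/(p-1)}$ at the Whitney scale $\rho$, and a Cavalieri-style argument converts this gain into the exponential factor $e^{-c'\varepsilon^{-s}}$ with exactly the stated exponent $s=(p-1)/(p-1-\eta)$. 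The divergence of $\int_0^{1/2}(-\ln t)^{\eta/(p-1)}\,dt/t$ at $t=0$ forces the truncation $r\le L$ when $\eta>0$; when $\eta=0$, the weight $h_\eta$ is identically $1$, the logarithmic bookkeeping disappears, and $C_0$ becomes independent of $L$.
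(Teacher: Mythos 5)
First, a point of reference: the paper does not prove this statement at all --- Theorem \ref{VHVth2} is quoted verbatim from \cite{VHV} (``The next result is proved in \cite{VHV}''), so your argument has to stand on its own. Your outer scheme (expand the exponential, reduce to moments $\int_{B_{2r}(a)}\mathbf{W}^{q}\le (Cq^{1/s})^{q}\|(\mathbf{M}^{\eta}_{\alpha p,R}[\mu_{B_r(a)}])^{1/(p-1)}\|_\infty^{q}|B_{2r}(a)|$ with $s=\frac{p-1}{p-1-\eta}$, and obtain these from a good-$\lambda$ inequality by the Cavalieri argument with $\varepsilon^{-s}\sim q$) is indeed the standard route and mirrors what the paper itself does in the proof of Theorem \ref{VHVth1}. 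The problem is that the decisive ingredient --- the good-$\lambda$ inequality with decay $\exp(-c'\varepsilon^{-s})$ measured against $\mathbf{M}^{\eta}_{\alpha p,R}$ --- is exactly the analytic content of the theorem, and you do not prove it: you only offer a heuristic paragraph. The inequality actually available from the sources the paper cites (\cite[Theorem 1.2]{HoJa}, \cite[Proposition 2.2]{VHV}, as used in Theorem \ref{VHVth1}) is the $\eta=0$ version with decay $\exp(-a\varepsilon^{-1})$; fed into your Cavalieri step it yields moments of size $(Cq)^{q}$ and hence integrability of $\exp(\delta\mathbf{W})$, not of $\exp(\delta\mathbf{W}^{s})$. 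Upgrading $\varepsilon^{-1}$ to $\varepsilon^{-s}$ is not a routine adaptation: a pointwise use of $\mathbf{M}^{\eta}\le A^{p-1}$ gives nothing, since $\int_{0}h_\eta^{1/(p-1)}(t)\,\frac{dt}{t}$ diverges, so the gain must be extracted through a genuinely measure-theoretic argument on the small scales, which is the part left blank. In this sense the proposal reduces the theorem to an unproven lemma of essentially the same depth, which is a genuine gap.

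There are also several slips that would need repair even granting that lemma. The logarithmic gain from integrating the weight is $\int_\rho^{1/2}(-\ln t)^{-\eta/(p-1)}\frac{dt}{t}\approx(-\ln\rho)^{(p-1-\eta)/(p-1)}=(-\ln\rho)^{1/s}$, not $(-\ln\rho)^{(p-1+\eta)/(p-1)}$ as you wrote; the $L$-dependence for $\eta>0$ comes from the range $[2^{-1},r]$ where $h_\eta$ is constant (no log gain), not from the divergence you invoke. In the Cavalieri step your maximal-function term is not localized: integrating $qt^{q-1}$ produces $\varepsilon^{-q}\|(\mathbf{M}^{\eta}_{\alpha p,R}[\mu_{B_r(a)}])^{1/(p-1)}\|_{L^q(\mathbb{R}^N)}^{q}$, which for $R=\infty$ and small $q$ can be infinite, so you must either localize the good-$\lambda$ inequality or restrict to the ambient ball and approximate (and in either case you need a priori finiteness of $\|\mathbf{W}\|_{L^q}$, via truncation and Fatou, as in the paper's proof of \eqref{ine0}). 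Finally, the theorem asserts the specific constant $\delta=\frac12\left(\frac{p-1-\eta}{12(p-1)}\right)^{\frac{p-1}{p-1-\eta}}\alpha p\log 2$, which is used later in the paper; your argument only yields some unspecified threshold $1/(eC_0^{s})$, so the stated $\delta$ is not recovered without tracking the constants in the missing lemma.
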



\begin{theorem}\label{th2}Let $\alpha>0$, $p>1$ with $0<\alpha p<N$, $\beta\geq 1$ and $R>0$. Assume $\mu\in \mathfrak{M}^+(\mathbb{R}^N)$ satisfies 
 \begin{equation}\label{05061}
 ||{\bf M}_{\alpha p,R}^{\frac{(p-1)(\beta-1)}{\beta}} [\mu] |{|_{{L^\infty }(\mathbb{R}^N)}} \le 1,
 \end{equation}
and set $\omega= ||{\bf M}_{\alpha p,R}^{\frac{(p-1)(\beta-1)}{\beta}} [1] ||_{{L^\infty }(\mathbb{R}^N)}^{-1}+\mu $. Then there exist positive constants $C$, $\delta_0$ and $c$ independent on $\mu$ such that $\exp \left( {\delta_0{{\left( { {\bf W}_{\alpha ,p}^R\left[ \omega  \right]} \right)}^{\beta}}} \right)$ is locally integrable in $\mathbb{R}^N$,
\begin{equation}\label{23051}
{\left\| { {\bf W}_{\alpha ,p}^R\left[ {\exp \left( {\delta_0{{\left( { {\bf W}_{\alpha ,p}^R\left[ \omega  \right]} \right)}^{\beta}}} \right)} \right]} \right\|_{{L^\infty }(\mathbb{R}^N)}} \le C,
\end{equation}
and
\begin{equation}\label{05062}
{ {\bf W}_{\alpha ,p}^R\left[ {\exp \left( {\delta_0{{\left( { {\bf W}_{\alpha ,p}^R\left[ \omega  \right]} \right)}^{\beta}}} \right)} \right]}\le c  {\bf W}_{\alpha ,p}^R[\omega] ~~\text{ in } \mathbb{R}^N.
\end{equation}
  
\end{theorem}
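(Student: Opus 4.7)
Set $\eta := \frac{(p-1)(\beta-1)}{\beta} \in [0,p-1)$, so $\frac{p-1}{p-1-\eta} = \beta$ and Theorem~\ref{VHVth2} applies with this $\eta$. The strategy is: split $\omega$ on a small ball into an interior piece (to which Theorem~\ref{VHVth2} applies) and a tail (controlled directly from the maximal function hypothesis); combine to bound $\int_{B_t(x)}\exp(\delta_0({\bf W}^R_{\alpha,p}[\omega])^\beta)\,dy$ by a power of $t$; then feed this back into the outer Wolff potential.

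Two preliminary observations. First, since $\omega \geq K\,dy$ in the sense of measures with $K := \|{\bf M}^\eta_{\alpha p, R}[1]\|_{L^\infty(\mathbb{R}^N)}^{-1}$, one has the uniform lower bound
\[
{\bf W}^R_{\alpha,p}[\omega](x) \;\geq\; K^{1/(p-1)}\int_0^R \Bigl(\tfrac{|B_t|}{t^{N-\alpha p}}\Bigr)^{\!1/(p-1)}\tfrac{dt}{t} \;=:\; c_0 \;>\; 0,
\]
with $c_0=c_0(N,\alpha,p,\beta,R)$. Second, \eqref{05061} and the construction of $\omega$ give $\|{\bf M}^\eta_{\alpha p, R}[\omega]\|_\infty \leq 2$, hence $\omega(B_s(y)) \leq 2\,s^{N-\alpha p}\,h_\eta(s)$ for every $y\in\mathbb{R}^N$ and $s\in(0,R]$.

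Fix $x\in\mathbb{R}^N$ and $t\in(0,R]$, and decompose $\omega = \omega_1 + \omega_2$ with $\omega_1 := \chi_{B_{2t}(x)}\omega$. For $y\in B_t(x)$ and $s\leq t$ one has $B_s(y)\subset B_{2t}(x)$, so $\omega_2(B_s(y))=0$, and the maximal bound gives
\[
{\bf W}^R_{\alpha,p}[\omega_2](y) \;\leq\; 2^{1/(p-1)}\int_t^R h_\eta(s)^{1/(p-1)}\,\tfrac{ds}{s} \;\leq\; C_1(1-\ln t)^{1/\beta},
\]
via the substitution $u=-\ln s$, where $C_1=C_1(N,\alpha,p,\beta,R)$. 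Combining with quasi-subadditivity ${\bf W}^R_{\alpha,p}[\omega]\leq c_p({\bf W}^R_{\alpha,p}[\omega_1]+{\bf W}^R_{\alpha,p}[\omega_2])$ and $(a+b)^\beta\leq 2^{\beta-1}(a^\beta+b^\beta)$ yields, pointwise on $B_t(x)$,
\[
\exp\bigl(\delta_0({\bf W}^R_{\alpha,p}[\omega])^\beta\bigr) \;\leq\; e^{\gamma}\,t^{-\gamma}\,\exp\bigl(\delta_0\,2^{\beta-1}c_p^\beta({\bf W}^R_{\alpha,p}[\omega_1])^\beta\bigr),
\]
with $\gamma := \delta_0\,2^{\beta-1}c_p^\beta C_1^\beta$. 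Now apply Theorem~\ref{VHVth2} to $\omega_1$ with $r=2t\le L:=2R$; since $\|{\bf M}^\eta_{\alpha p, R}[\omega_1]\|_{L^\infty(B_{2t}(x))}^{\beta/(p-1)} \leq 2^{\beta/(p-1)}$, if $\delta_0$ is small enough that $\delta_0\,2^{\beta-1}c_p^\beta \leq \delta\,2^{-\beta/(p-1)}$, integrating over $B_{4t}(x)\supset B_t(x)$ gives
\[
\int_{B_t(x)}\exp\bigl(\delta_0({\bf W}^R_{\alpha,p}[\omega](y))^\beta\bigr)\,dy \;\leq\; C_2\, t^{N-\gamma},
\]
uniformly in $x$ and $t\in(0,R]$; local integrability of $\exp(\delta_0({\bf W}^R_{\alpha,p}[\omega])^\beta)$ is immediate.

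Plugging this estimate into the definition of ${\bf W}^R_{\alpha,p}$ produces
\[
{\bf W}^R_{\alpha,p}\!\bigl[\exp(\delta_0({\bf W}^R_{\alpha,p}[\omega])^\beta)\bigr](x) \;\leq\; C_2^{1/(p-1)}\int_0^R t^{(\alpha p-\gamma)/(p-1)-1}\,dt \;=:\; C,
\]
a finite constant independent of $x$ and $\mu$, provided $\delta_0$ is further shrunk so that $\gamma<\alpha p$. This proves \eqref{23051}; then \eqref{05062} follows with $c:=C/c_0$ from the first paragraph's uniform lower bound. The main obstacle I anticipate is the two-fold smallness requirement on $\delta_0$: it must simultaneously satisfy $\delta_0\,2^{\beta-1}c_p^\beta \leq \delta\,2^{-\beta/(p-1)}$ (to fit inside the exponential integrability range of Theorem~\ref{VHVth2}) and $\delta_0\,2^{\beta-1}c_p^\beta C_1^\beta<\alpha p$ (so the outer Wolff integral converges at $t=0$). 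Both thresholds depend only on the fixed parameters $N,\alpha,p,\beta,R$, so a valid $\delta_0$ is always available; pinning down the explicit tail constant $C_1$ and checking the integrability $\int h_\eta^{1/(p-1)} \frac{ds}{s} \lesssim (1-\ln t)^{1/\beta}$ uniformly in the regimes $R\lessgtr 1/2$ is the only computation that warrants care.
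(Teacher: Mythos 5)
Your proposal is correct and follows essentially the same route as the paper's proof: isolate the part of $\omega$ near the point (the paper truncates the potential radius and notes ${\bf W}^r_{\alpha,p}[\omega]={\bf W}^r_{\alpha,p}[\omega_{B_{2r}(x)}]$ on $B_r(x)$, an equivalent device to your measure splitting), control the tail via the $\eta$-fractional maximal bound to get the logarithmic growth, invoke Theorem \ref{VHVth2} for the local exponential integral, conclude $\int_{B_t(x)}\exp\bigl(\delta_0({\bf W}^R_{\alpha,p}[\omega])^\beta\bigr)dy\le C_2t^{N-\gamma}$ with $\gamma<\alpha p$ for $\delta_0$ small, feed this into the outer Wolff potential, and obtain \eqref{05062} from the uniform lower bound ${\bf W}^R_{\alpha,p}[\omega]\ge c_0$. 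The only blemish is the tail estimate $C_1(1-\ln t)^{1/\beta}$, which should be written as $C_1\bigl(1+(-\ln(t\wedge 2^{-1}))^{1/\beta}\bigr)$ so that it remains meaningful when $t>e$ (the paper uses exactly this $t\wedge 2^{-1}$ form); this is the regime check you already flagged and it does not affect the argument.
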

\begin{proof} Let $\delta$ be as in Theorem \ref{VHVth2}. From \eqref{05061}, we have 
 \begin{equation*}
 ||{\bf M}_{\alpha p,R}^{\frac{(p-1)(\beta-1)}{\beta}} [\omega] |{|_{{L^\infty }(\mathbb{R}^N)}} \le 2.
 \end{equation*}
 Let $x\in \mathbb{R}^N$. Since $\omega ({B_t}(y)) \le 2{t^{N - \alpha p}}{h_{\frac{(p-1)(\beta -1)}{\beta}} }(t)$, for all $r\in (0,R)$ and $y\in \mathbb{R}^N$ we have 
 \begin{align*}
 \nonumber
   {\bf W}_{\alpha ,p}^R\left[ \omega  \right](y) &=  {\bf W}_{\alpha ,p}^r\left[ \omega \right](y) + \int_r^R {{{\left( {\frac{{\omega ({B_t}(y))}}{{{t^{N - \alpha p}}}}} \right)}^{\frac{1}{{p - 1}}}}\frac{{dt}}{t}}  \\ 
   &\le\nonumber  {\bf W}_{\alpha ,p}^r\left[ \omega  \right](y) + 2^{\frac{1}{p-1}}\int_{r \wedge 2^{-1}}^{2^{-1}} { {{{( - \ln t)}^{ - \frac{\beta-1}{\beta} }}} \frac{{dt}}{t}}  +2^{\frac{1}{p-1}} \int_{2^{-1}}^{R \vee 2^{-1}} {{( - \ln t)}^{ - \frac{\beta-1}{\beta} }\frac{{dt}}{t}}  \\ 
   &\le  {\bf W}_{\alpha ,p}^r\left[ \omega \right](y) + c_8{( - \ln (r \wedge 2^{-1}))^{\frac{1}{\beta}}} +c_8.
 \end{align*}
Thus, 
\begin{equation}\label{25031}{\left( { {\bf W}_{\alpha ,p}^R\left[ \omega \right](y)} \right)^{\beta}} \le 3^{\beta-1}{\left( { {\bf W}_{\alpha ,p}^r\left[ \omega \right](y)} \right)^{\beta}}+c_9\ln \left( {\frac{1}{{r \wedge 2^{-1}}}} \right)+c_9.
\end{equation}
Let $\theta\in (0,2^{-\frac{\beta}{p-1}}]$, since $\exp{(\frac{a+b}{2})}\le \exp{(a)}+\exp{(b)}$ for all $a, b\in \mathbb{R}$,  we get from \eqref{25031} 
\begin{align}
\nonumber
\exp \left( \theta\delta 3^{-\beta}{  {{\left( { {\bf W}_{\alpha ,p}^R\left[ \omega \right](y)} \right)}^{\beta}}} \right) &\leq \exp \left( { \delta 2^{-\frac{\beta}{p-1}} {{\left( { {\bf W}_{\alpha ,p}^r\left[\omega  \right](y)} \right)}^{\beta}}} \right) 
+ c_{10}\exp \left( {\theta c_{11}\ln \left( {\frac{1}{{r \wedge 2^{-1}}}} \right)} \right)\\&= \exp \left( { \delta 2^{-\frac{\beta}{p-1}} {{\left( { {\bf W}_{\alpha ,p}^r\left[ \omega  \right](y)} \right)}^{\beta}}} \right) 
+ c_{10}\left({r \wedge 2^{-1}}\right)^{-\theta c_{11}}.\label{1231}
\end{align}
For $r>0$, $0<t\le r$, $y\in B_{r}(x)$ there holds $B_t(y)\subset B_{2r}(x)$. Thus, ${\bf W}_{\alpha ,p}^r[\omega]={\bf W}_{\alpha ,p}^r[\omega_{B_{2r}(x)}]$ in $B_r(x)$. Then, using \eqref{ine1} in Theorem \ref{VHVth2} with $\eta=\frac{(p-1)(\beta-1)}{\beta}$ and $L=2R$ we get
\[\begin{array}{ll}\displaystyle\int_{{B_r}(x)} {\exp \left( { \delta 2^{-\frac{\beta}{p-1}} {{\left( { {\bf W}_{\alpha ,p}^r\left[ \omega  \right]} \right)}^{\gamma}}} \right)}  = \int_{{B_r}(x)} {\exp \left( { \delta 2^{-\frac{\beta}{p-1}} {{\left( { {\bf W}_{\alpha ,p}^r\left[ {{\omega _{{B_{2r}}(x)}}} \right]} \right)}^{\gamma}}} \right)} 
 \le c_{12}{r^N}.\end{array}\]
Therefore, taking $\theta  = 2^{-\frac{\beta}{p-1}} \wedge \frac{{\alpha p}}{{2c_{11}}}$, we deduce from \eqref{1231}
\begin{align*}
{\bf W}_{\alpha ,p}^R\left[ {\exp \left( {\theta \delta 3^{-\beta}{{\left( { {\bf W}_{\alpha ,p}^R\left[ \omega  \right]} \right)}^{\gamma}}} \right)} \right](x)&\le \int_0^R {\left( {c_{12}{r^{\alpha p}}}+ {c_{13}{{\left( {r \wedge 2^{-1}} \right)}^{ - \theta c_{11}}}{r^{\alpha p}}} \right)^{\frac{1}{{p - 1}}}}\frac{{dr}}{r}
\\&\le \int_0^R {\left( {c_{12}{r^{\alpha p}}}+ {c_{13}{{\left( {r \wedge 2^{-1}} \right)}^{ - \frac{\alpha p}{2}}}{r^{\alpha p}}} \right)^{\frac{1}{{p - 1}}}}\frac{{dr}}{r}
\\&\le   c_{14}.
\end{align*}
Hence, we get \eqref{23051} with $\delta_0=\left(2^{-\frac{\beta}{p-1}} \wedge \frac{{\alpha p}}{{2c_{11}}}\right)\delta 3^{-\beta}$; we also get \eqref{05062} since ${\bf W}_{\alpha,p}^R[\omega]\geq c_{15}$ for some positive constant $c_{15}>0$.
\end{proof}\medskip


We recall that $H_l$ and $P_{l,a,\beta}$ have been defined in (\ref{EQ10}) and (\ref{EQ11}).
\begin{theorem}
\label{th3}Let $\alpha>0$, $p>1$, $l\in \mathbb{N}^*$ and $\beta\geq 1$ such that $0<\alpha p <N$,  $l\beta>\frac{N(p-1)}{N-\alpha p}$ and  $R>0$. Assume that $\mu\in \mathfrak{M}^+(\mathbb{R}^N)$ has support in $B_R$ and verifies 
\begin{equation}
\label{08061}||{\bf M}_{\alpha p}^{\frac{(p-1)(\beta-1)}{\beta}} [\mu] |{|_{{L^\infty }(\mathbb{R}^N)}} \le 1,
\end{equation}
and set $\omega=||{\bf M}_{\alpha p}^{\frac{(p-1)(\beta-1)}{\beta}} [\chi_{B_R}] ||^{-1}_{{L^\infty }(\mathbb{R}^N)}\chi_{B_R}+\mu$. Then there exist $C=C(N,\alpha,p,l,\beta,R)>0$ and $\delta_1=\delta_1(N,\alpha,p,l,\beta,R)>0$ such that $H_l\left(\delta_1\left({\bf W}_{\alpha,p}[\omega]\right)^{\beta}\right)$ is integrable in $\mathbb{R}^N$ and
\begin{equation}\label{04063}
{\bf W}_{\alpha,p}\left[H_l\left(\delta_1\left({\bf W}_{\alpha,p}[\omega]\right)^{\beta}\right)\right](x)\le C{\bf W}_{\alpha,p}[\omega](x) ~~\forall ~x \in \mathbb{R}^N.
\end{equation}
\end{theorem}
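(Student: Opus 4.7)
The plan is to reduce to Theorem \ref{th2} on a large ball $B_{R_0}$ built from $R$, and to exploit the polynomial decay of ${\bf W}_{\alpha,p}[\omega]$ at infinity (available because $\omega$ has compact support in $B_R$). The sharp threshold $l\beta>N(p-1)/(N-\alpha p)$ will appear twice: for integrability of $F:=H_l(\delta_1({\bf W}_{\alpha,p}[\omega])^\beta)$ on $\mathbb{R}^N$, and for matching the decay rate of ${\bf W}_{\alpha,p}[F]$ with that of ${\bf W}_{\alpha,p}[\omega]$ at infinity.

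Set $\eta=(p-1)(\beta-1)/\beta$ and $\gamma=(N-\alpha p)/(p-1)$. As in Theorem \ref{th2}, the construction of $\omega$ together with \eqref{08061} yields $||{\bf M}_{\alpha p}^\eta[\omega]||_{L^\infty}\le 2$; also $\omega$ has finite total mass $M_0$ and support in $B_R$. Evaluating the Wolff integral with $\omega(B_t(y))=0$ for $t<|y|-R$ and $\omega(B_t(y))=M_0$ for $t\ge|y|+R$ yields the two-sided decay $c|y|^{-\gamma}\le {\bf W}_{\alpha,p}[\omega](y)\le C|y|^{-\gamma}$ for $|y|\ge 2R$. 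Fixing $R_0:=3R$, the same computation shows that on $B_{R_0}$ the tail $\int_{R_0}^\infty(\omega(B_t(y))/t^{N-\alpha p})^{1/(p-1)}dt/t$ is bounded by a constant $K(N,\alpha,p,R)$, hence $({\bf W}_{\alpha,p}[\omega](y))^\beta\le 2^{\beta-1}({\bf W}_{\alpha,p}^{R_0}[\omega](y))^\beta+C$ on $B_{R_0}$. I would then choose $\delta_1$ so that $2^{\beta-1}\delta_1\le\delta_0$, the constant from Theorem \ref{th2}. Using $H_l(r)\le e^r$ gives a pointwise control of $F$ on $B_{R_0}$ by a multiple of $\exp(\delta_0({\bf W}_{\alpha,p}^{R_0}[\omega])^\beta)$, locally integrable by Theorem \ref{VHVth2}. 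Outside $B_{R_0}$ the argument of $H_l$ is bounded, so $H_l(r)\le(r^l/l!)e^r$ yields $F(y)\le C|y|^{-l\beta\gamma}$, integrable at infinity precisely when $l\beta\gamma>N$.

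For \eqref{04063}, write $F=F_1+F_2$ with $F_1=F\chi_{B_{R_0}}$ and use $(a+b)^{1/(p-1)}\le c_p(a^{1/(p-1)}+b^{1/(p-1)})$ to split ${\bf W}_{\alpha,p}[F]$. For $F_1$: the $R_0$-truncated piece is dominated by ${\bf W}_{\alpha,p}^{R_0}[C\exp(\delta_0({\bf W}_{\alpha,p}^{R_0}[\omega])^\beta)]\le C$ by Theorem \ref{th2}, while the tail at $t\ge R_0$ is bounded by $C||F_1||_{L^1}^{1/(p-1)}R_0^{-\gamma}$, so ${\bf W}_{\alpha,p}[F_1]\le C$ globally. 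For $F_2$: on any bounded region ${\bf W}_{\alpha,p}[F_2]$ is $\le C$; when $|x|\ge 2R_0$, split the $t$-integral at $t=|x|/2$. The outer range uses $F_2(B_t(x))\le||F_2||_{L^1}$ and yields $C|x|^{-\gamma}$. The inner range uses $F_2(y)\le C|x|^{-l\beta\gamma}$ on $B_{|x|/2}(x)\subset B_{R_0}^c$, producing a term of order $|x|^{-l\beta\gamma/(p-1)+\alpha p/(p-1)}$, bounded by $C|x|^{-\gamma}$ exactly when $l\beta\ge N(p-1)/(N-\alpha p)$. Combining with ${\bf W}_{\alpha,p}[\omega](x)\ge c_0>0$ on bounded sets (from $\omega\ge M^{-1}\chi_{B_R}$) and ${\bf W}_{\alpha,p}[\omega](x)\ge c|x|^{-\gamma}$ at infinity yields \eqref{04063}.

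The main obstacle is the last decay estimate: making ${\bf W}_{\alpha,p}[F_2](x)$ decay as fast as ${\bf W}_{\alpha,p}[\omega](x)$ at infinity. The exponent arithmetic forces $l\beta\gamma\ge N$, which is exactly the standing hypothesis on $l\beta$ and explains its necessity.
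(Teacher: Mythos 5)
Your argument is correct in substance but follows a genuinely different route from the paper. You decompose in space: writing $F=H_l\left(\delta_1\left({\bf W}_{\alpha,p}[\omega]\right)^{\beta}\right)=F_1+F_2$ with $F_1=F\chi_{B_{R_0}}$, you control the near part through the uniform bound \eqref{23051} of Theorem \ref{th2} (after absorbing the bounded tail $\int_{R_0}^\infty$ of the Wolff integral into a constant) and the far part through the explicit two-sided decay ${\bf W}_{\alpha,p}[\omega](y)\simeq |y|^{-\gamma}$, $\gamma=\frac{N-\alpha p}{p-1}$, available because $\omega$ has compact support and finite mass; the hypothesis $l\beta>\frac{N(p-1)}{N-\alpha p}$ then enters through exponent arithmetic, as you observe. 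The paper instead decomposes in scale: it splits ${\bf W}_{\alpha,p}[\omega]={\bf L}^t_{\alpha,p}[\omega]+{\bf W}^t_{\alpha,p}[\omega]$ with $t$ tied to the radius in the outer Wolff integral, uses convexity of $H_l$, and proves the two estimates \eqref{31051} (a Fubini argument for the lower part) and \eqref{31052} (the measure comparison $\omega^2_t(B_t(x))\le c\,\omega(B_{4t}(x))$, obtained from Theorem \ref{VHVth2} and a geometric case analysis in $x$), which convert the potential of the exponential term into $C\,{\bf W}_{\alpha,p}[\omega](x)$ scale by scale, without ever invoking a lower bound on ${\bf W}_{\alpha,p}[\omega]$. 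Your spatial route is more elementary and transparent, but it leans on the background term in $\omega$ to get ${\bf W}_{\alpha,p}[\omega]\ge c_0>0$ on compacts and the matching lower bound $c|x|^{-\gamma}$ at infinity, whereas the paper's comparison is intrinsic.

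Two points need patching, both routine. First, at infinity your written bound for $F_1$ is only ${\bf W}_{\alpha,p}[F_1]\le C$; since ${\bf W}_{\alpha,p}[\omega](x)\simeq|x|^{-\gamma}\to 0$, a constant bound does not by itself give ${\bf W}_{\alpha,p}[F_1](x)\le C\,{\bf W}_{\alpha,p}[\omega](x)$ for $|x|\ge 2R_0$. You need, and do in fact have, the decay ${\bf W}_{\alpha,p}[F_1](x)\le C\,\|F_1\|_{L^1}^{1/(p-1)}|x|^{-\gamma}$ for $|x|\ge 2R_0$, which follows from exactly the computation in your first step, since $F_1(B_t(x))=0$ for $t<|x|-R_0$ and $F_1(B_t(x))\le\|F_1\|_{L^1}<\infty$; this line should be stated. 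Second, Theorem \ref{th2} is formulated for $\omega'=\|{\bf M}^{\eta}_{\alpha p,R_0}[1]\|^{-1}_{L^\infty(\mathbb{R}^N)}+\mu$ with $\eta=\frac{(p-1)(\beta-1)}{\beta}$, not for your $\omega=\|{\bf M}^{\eta}_{\alpha p}[\chi_{B_R}]\|^{-1}_{L^\infty(\mathbb{R}^N)}\chi_{B_R}+\mu$; since $\omega\le\kappa\,\omega'$ with $\kappa=\kappa(N,\alpha,p,\beta,R)$, you may apply \eqref{23051} after shrinking $\delta_1$ by the factor $\kappa^{\beta/(p-1)}$, or reprove the needed local bound directly from Theorem \ref{VHVth2}, as the paper does in \eqref{04062}. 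With these additions the proof closes.
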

\begin{proof} We have from \eqref{08061} 
\begin{equation}\label{04061}
||{\bf M}_{\alpha p}^{\frac{(p-1)(\beta-1)}{\beta}} [\omega] |{|_{{L^\infty }(\mathbb{R}^N)}}\le 2.
\end{equation} 
In particular, $\omega(B_R)\le c_{16}$. Let $\delta_1>0$ and $x\in \mathbb{R}^N$ fixed.  We split the Wolff potential ${\bf W}_{\alpha,p}[\omega]$ into lower and upper parts defined by 
\begin{equation*}
{\bf L}_{\alpha,p}^{t}[\omega](x)=\int_{t}^{+\infty}\left(\frac{\omega(B_r(x))}{r^{N-\alpha p}}\right)^{\frac{1}{p-1}}\frac{dr}{r},
\end{equation*} 
and \begin{equation*}
{\bf W}_{\alpha,p}^t[\omega](x)=\int_{0}^{t}\left(\frac{\omega(B_r(x))}{r^{N-\alpha p}}\right)^{\frac{1}{p-1}}\frac{dr}{r}. 
\end{equation*} 
Using the convexity we have $$H_l\left(\delta_1\left({\bf W}_{\alpha,p}[\omega]\right)^{\beta}\right)\le H_l\left(\delta_12^{\beta}\left({\bf L}^t_{\alpha,p}[\omega]\right)^{\beta}\right)+H_l\left(\delta_12^{\beta}\left({\bf W}^t_{\alpha,p}[\omega]\right)^{\beta}\right).$$
Thus, 
\begin{equation*}
{\bf W}_{\alpha,p}\left[H_l\left(\delta_1\left({\bf W}_{\alpha,p}[\omega]\right)^{\beta}\right)\right](x) \le c_{17} \int_{0}^{+\infty}\left(\frac{\omega_t^1(B_t(x))}{t^{N-\alpha p}}\right)^{\frac{1}{p-1}}\frac{dt}{t}+c_{17} \int_{0}^{+\infty}\left(\frac{\omega_t^2(B_t(x))}{t^{N-\alpha p}}\right)^{\frac{1}{p-1}}\frac{dt}{t},
\end{equation*}
where $d\omega_t^1=H_l\left(\delta_12^{\beta}\left({\bf L}_{\alpha,p}^{t}[\omega]\right)^{\beta}\right)dx$ and $d\omega_t^2=H_l\left(\delta_12^{\beta}\left({\bf W}_{\alpha,p}^t[\omega]\right)^{\beta}\right)dx$. 
 Inequality \eqref{04063} will follows from the two inequalities below,
\begin{equation}\label{31051}
\int_{0}^{+\infty}\left(\frac{\omega_t^1(B_t(x))}{t^{N-\alpha p}}\right)^{\frac{1}{p-1}}\frac{dt}{t}\le c_{18} {\bf W}_{\alpha,p}[\omega](x),
\end{equation}
and 
\begin{equation}\label{31052}
\omega^2_t(B_t(x))\le c_{18} \omega(B_{4t}(x)).
\end{equation}
{\it Step 1: Proof of (\ref{31051})}.  Since $B_r(y)\subset B_{2r}(x)$ for $y\in B_t(x)$ and $r\geq t$, there holds 
\begin{equation*}
{\bf L}_{\alpha,p}^t[\omega](y)\le \int_{t}^{+\infty}\left(\frac{\omega(B_{2r}(x))}{r^{N-\alpha p}}\right)^{\frac{1}{p-1}}\frac{dr}{r}=2^{\frac{N-\alpha p}{p-1}}{\bf L}_{\alpha,p}^{2t}[\omega](x).
\end{equation*}
It follows 
\begin{equation*}\label{31053}
\omega^1_t(B_t(x))\le |B_1(0)|t^NH_l\left(\delta_1c_{19}\left({\bf L}_{\alpha,p}^{2t}[\omega](x)\right)^{\beta}\right).
\end{equation*}
Thus,
\begin{equation}\label{08062}
\int_{0}^{+\infty}\left(\frac{\omega_t^1(B_t(x))}{t^{N-\alpha p}}\right)^{\frac{1}{p-1}}\frac{dt}{t}\le c_{20}\int_{0}^{\infty} A_t(x)dt,
\end{equation} 
where 
$$A_t(x)=\left(t^{\alpha p}H_l\left(\delta_1c_{19}\left({\bf L}_{\alpha,p}^{2t}[\omega](x)\right)^{\beta}\right)\right)^{\frac{1}{p-1}}\frac{1}{t}.$$  
Since $H_l(s)\le s^l \exp(s)$ for all $s\geq 0$,
\begin{align*}
A_t(x)&\le c_{21}\left(t^{\alpha p}\left({\bf L}_{\alpha,p}^{2t}[\omega](x)\right)^{l\beta}\exp\left(\delta_1c_{19}\left({\bf L}^{2t}_{\alpha,p}[\omega](x)\right)^{\beta}\right)\right)^{\frac{1}{p-1}}\frac{1}{t}
\\&=  c_{21}t^{\frac{\alpha p}{p-1}-1}\left({\bf L}_{\alpha,p}^{2t}[\omega](x)\right)^{\frac{l\beta-p+1}{p-1}}\exp\left(\delta_1c_{22}\left({\bf L}_{\alpha,p}^{2t}[\omega](x)\right)^{\beta}\right){\bf L}_{\alpha,p}^{2t}[\omega](x).
\end{align*} 
Now we estimate ${\bf L}^{2t}_{\alpha,p}[\omega]$. \smallskip

\noindent{\it Case 1}: $t\in (0,1)$. From \eqref{04061} we deduce
\begin{align*}
{\bf L}_{\alpha,p}^{2t}[\omega](x)&\le\int_{t/2}^{1/2}\left(\frac{\omega(B_s(x))}{s^{N-\alpha p}}\right)^{\frac{1}{p-1}}\frac{ds}{s}+\int_{1/2}^{\infty}\left(\frac{\omega(B_s(x))}{s^{N-\alpha p}}\right)^{\frac{1}{p-1}}\frac{ds}{s}
\\&\le c_{23}\int_{t/2}^{1/2}(-ln(s))^{-1+\frac{1}{\beta}}\frac{ds}{s}+\int_{1/2}^{\infty}\left(\frac{\omega(B_R)}{s^{N-\alpha p}}\right)^{\frac{1}{p-1}}\frac{ds}{s}
\\&\le c_{24}\left(-\ln (t/2)\right)^{\frac{1}{\beta}},
\end{align*} 
which implies 
\begin{align*}
A_t(x)&\le  c_{25}t^{\frac{\alpha p}{p-1}-1}\left(-\ln (t/2)\right)^{\frac{l\beta-p+1}{\beta(p-1)}}\exp\left(\delta_1c_{26}(-\ln(t/2))\right){\bf L}_{\alpha,p}^{2t}[\omega](x)\\&=
 c_{27}t^{\frac{\alpha p}{p-1}-1}\left(-\ln (t/2)\right)^{\frac{l\beta-p+1}{\beta(p-1)}}t^{-\delta_1c_{26}}{\bf L}_{\alpha,p}^{2t}[\omega](x).
\end{align*} 
We take $\delta_1\le \frac{1}{2c_{26}}\left(\frac{\alpha p}{p-1}-1\right)$ and obtain
\begin{equation}
A_t(x)\le c_{28}{\bf L}_{\alpha,p}^{2t}[\omega](x)~~\forall t\in (0,1).
\end{equation}
{\it Case 2}: $t\geq 1$. We have 
\begin{eqnarray*}
{\bf L}_{\alpha,p}^{2t}[\omega](x)\le \int_{2t}^{\infty}\left(\frac{\omega(B_R)}{s^{N-\alpha p}}\right)^{\frac{1}{p-1}}\frac{ds}{s}= c_{29}t^{-\frac{N-\alpha p}{p-1}},
\end{eqnarray*}
thus 
\begin{align*}
A_t(x)&\le c_{30}t^{\frac{\alpha p}{p-1}-1}t^{-\frac{(l\beta-p+1)(N-\alpha p)}{(p-1)^2}}\exp\left(\delta_1c_{31}t^{-\frac{\beta(N-\alpha p)}{p-1}}\right){\bf L}_{\alpha,p}^{2t}[\omega](x)
\\&\le   c_{32}t^{-1-\gamma}{\bf L}_{\alpha,p}^{2t}[\omega](x),
\end{align*}
where $\gamma=\frac{1}{p-1}\left(\frac{l\beta(N-\alpha p)}{p-1}-N\right)>0$.\\
Therefore, $A_t(x)\le c_{33} (t\vee 1)^{-1-\gamma}{\bf L}_{\alpha,p}^{2t}[\omega](x)$ for all $t>0$.
Therefore, from \eqref{08062} \begin{equation*}
\int_{0}^{+\infty}\left(\frac{\omega_t^1(B_t(x))}{t^{N-\alpha p}}\right)^{\frac{1}{p-1}}\frac{dt}{t}\le c_{34}\int_{0}^{\infty} (t\vee 1)^{-1-\gamma}{\bf L}_{\alpha,p}^{2t}[\omega](x)dt.
\end{equation*} 
Using Fubini Theorem we get 
\begin{align*}
\int_{0}^{+\infty}\left(\frac{\omega_t^1(B_t(x))}{t^{N-\alpha p}}\right)^{\frac{1}{p-1}}\frac{dt}{t}&\le c_{34}\int_{0}^{\infty} \int_{0}^{t/2}(s\vee 1)^{-1-\gamma}ds\left(\frac{\omega(B_t(x))}{t^{N-\alpha p}}\right)^{\frac{1}{p-1}}\frac{dt}{t}\\&\le  c_{35}\int_{0}^{\infty} \left(\frac{\omega(B_t(x))}{t^{N-\alpha p}}\right)^{\frac{1}{p-1}}\frac{dt}{t}
\\&=c_{35}{\bf W}_{\alpha,p}[\mu](x),
\end{align*}

which follows \eqref{31051}.\smallskip

\noindent{\it Step 2: Proof of (\ref{31052})}.   For $t>0$, $r\le t$ and $y\in B_t(x)$ we have $B_r(y)\subset B_{2t}(x)$, thus 
\begin{equation*}
\omega^2_t(B_t(x))=\int _{B_t(x)}H_l\left(\delta_12^{\beta}\left({\bf W}_{\alpha,p}^t[\omega_{B_{2t}(x)}](y)\right)^{\beta}\right)dy.
\end{equation*}
 By Theorem \ref{VHVth2} there exists $c_{36}>0$ such that 
for $0<\delta_1\le c_{36}$, $0<t<2R$, $z\in \mathbb{R}^N$,
\begin{equation}\label{04062}
\int _{B_{4t}(z)}\exp\left(\delta_12^{\beta}\left({\bf W}_{\alpha,p}[\omega_{B_{2t}(z)}](y)\right)^{\beta}\right)dy\le  c_{37}t^{N}.
\end{equation} 
We take $0<\delta_1\le c_{36}$.\smallskip

\noindent{\it Case 1: $x\in B_R$}. If $0<t<2R$, from \eqref{04062} we get
\begin{equation*}
\omega^2_t(B_t(x))\le c_{37}t^{N}\le c_{38}\omega(B_{4t}(x)).
\end{equation*}
If $t\geq 2R$, since for any $|y|\geq 2R$,
\begin{equation*}
{\bf W}_{\alpha,p}[\omega](y)=\int_{|y|/2}^\infty\left(\frac{\omega(B_t(y))}{t^{N-\alpha p}}\right)^{\frac{1}{p-1}}\frac{dt}{t}
\le c_{39}\int_{|y|/2}^\infty t^{-1-\frac{N-\alpha p}{p-1}}dt
\le c_{40}|y|^{-\frac{N-\alpha p}{p-1}},
\end{equation*}
and thanks to \eqref{04062} we have
\begin{align*}
\omega^2_t(B_t(x))&\le  \int _{B_{2R}}\exp\left(\delta_12^{\beta}\left({\bf W}_{\alpha,p}[\omega_{B_{R}}](y)\right)^{\beta}\right)dy+\int _{\mathbb{R}^N\backslash B_{2R}}H_l\left(\delta_12^{\beta}\left({\bf W}_{\alpha,p}[\omega](y)\right)^{\beta}\right)dy\\&\le c_{41}R^N+\int _{\mathbb{R}^N\backslash B_{2R}}H_l\left(c_{42}|y|^{-\frac{\beta(N-\alpha p)}{p-1}}\right)dy
\\&\le  c_{43}+c_{43}\int _{\mathbb{R}^N\backslash B_{2R}}|y|^{-\frac{l\beta(N-\alpha p)}{p-1}}dy
= c_{43}+c_{44}R^{N-\frac{l\beta(N-\alpha p)}{p-1}}\\&\le c_{45}|B_{4t}(x)\cap B_R|\le c_{46}\omega(B_{4t}(x)).
\end{align*}
From this we also have $H_l\left(\delta_1\left({\bf W}_{\alpha,p}[\omega]\right)^{\beta}\right)\in L^1(\mathbb{R}^N)$.\\
{\it Case 2: $x\in \mathbb{R}^N\backslash B_{R}$}.
If $|x|>R+t$ then $\omega^2_t(B_t(x))=0$.
Next we consider the case $R<|x|\le R+t$.
If $0<t<2R$, we have $B_{t/2}((R-\frac{t}{2})\frac{x}{|x|})\subset B_{4t}(x)\cap B_R$; thus from \eqref{04062} we get 
\begin{eqnarray*}
\omega^2_t(B_t(x))\le  c_{47}t^N=c_{48}\left|B_{t/2}\left((R-\frac{t}{2})\frac{x}{|x|}\right)\right|
\le  c_{48}\left|B_{4t}(x)\cap B_R\right|\le  c_{49}\omega(B_{4t}(x)).
\end{eqnarray*}
If $t\geq 2R$, as in Case 1 we also obtain  $\omega^2_t(B_t(x))\le c_{50}\omega(B_{4t}(x))$ since $B_R\subset B_{4t}(x)$. 
Hence, we get \eqref{31052}. Therefore, the result follows with  $\delta_1=\left(\frac{1}{2c_{26}}\left(\frac{\alpha p}{p-1}-1\right)\right) \wedge c_{36}$. 
\end{proof}\medskip


In the next result we obtain estimate on a sequence of solutions of Wolff integral inequations obtained by induction. 

\begin{theorem}\label{TH3} Assume that the assumptions on $\ga$, $p$, $l$, $a$, $\beta$, $\varepsilon$, $f$, $\gm_1$ and $\gm$ of  Theorem \ref{MT5} are fulfilled and $R,K$ are positive real numbers. Suppose that  $\{u_m\}$ is a sequence of nonnegative  measurable  functions in $\mathbb{R}^N$ that satisfies 
  \begin{equation} \label {ite-1}\begin{array}{ll}
  {u_{m + 1}} \leq K {\bf W}_{\alpha ,p}^{R}[P_{l,a,\beta}(u_m ) + \mu ]+f\quad\forall m \in\BBN,\\[2mm]
  \phantom{--}
{u_0} \leq K {\bf W}_{\alpha ,p}^{R}[\mu ]+f.
  \end{array}\end{equation}
 Then there exists $M>0$ depending on $N,\alpha,p,l,a,\beta,\varepsilon, K$ and $R$ such that if 
 \begin{equation*}\label{13061}
 ||{\bf M}_{\alpha p,R}^{\frac{{(p - 1)(\beta  - 1)}}{\beta }}[\mu ]|{|_{{L^\infty }({\mathbb{R}^N})}} \le M~~\text{ and }~ ||{\bf M}_{\alpha p,R}^{\frac{{(p - 1)(\beta  - 1)}}{\beta }}[\mu_1 ]|{|_{{L^\infty }({\mathbb{R}^N})}} \le M,
 \end{equation*}
there holds 
\begin{equation}\label{12321}
    {P_{l,a,\beta}\left({4c_p K {\bf W}_{\alpha ,p}^{R}[\omega_1 ] + 4c_pK{\bf W}_{\alpha,p}^{R}[\omega_2]+ f}\right) \in L^1_{loc}(\mathbb{R}^N)},
    \end{equation}
and 
\begin{equation}\label{1232}
{u_m} \le 2c_p K {\bf W}_{\alpha ,p}^{R}[\omega_1 ] + 2c_pK{\bf W}_{\alpha,p}^{R}[\omega_2]+ f\quad\forall m \in\BBN,
\end{equation}
where 
\begin{equation}\label{omega1}\omega_1=M||{\bf M}_{\alpha p,R}^{\frac{(p-1)(\beta-1)}{\beta}} [1] ||^{-1}_{{L^\infty }(\mathbb{R}^N)}+\mu,
\end{equation}
\begin{equation}\label{omega2}
\omega_2=M||{\bf M}_{\alpha p,R}^{\frac{(p-1)(\beta-1)}{\beta}} [1] ||^{-1}_{{L^\infty }(\mathbb{R}^N)}+\mu_1,
\end{equation}
and  $c_p={1 \vee {4^{\frac{{2 - p}}{{p - 1}}}}}$.\\
Furthermore, if $f\equiv 0$ then \eqref{12321} and \eqref{1232} are satisfied with $\omega_2\equiv 0$.
\end{theorem}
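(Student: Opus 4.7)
The plan is to prove both conclusions by induction on $m$, setting $F := 2c_pK\mathbf{W}_{\alpha,p}^R[\omega_1] + 2c_pK\mathbf{W}_{\alpha,p}^R[\omega_2] + f$ and closing the inductive step via a pointwise decomposition of $P_{l,a,\beta}(F)$ combined with Theorem~\ref{th2}. The base case $u_0 \le F$ is immediate from $\mu \le \omega_1$ and $c_p \ge 1$. For the inductive step, I would use monotonicity of $P_{l,a,\beta}$ and the quasi-subadditivity $\mathbf{W}_{\alpha,p}^R[\nu_1+\nu_2] \le c_p(\mathbf{W}_{\alpha,p}^R[\nu_1]+\mathbf{W}_{\alpha,p}^R[\nu_2])$ (which comes from the scalar inequality $(a+b)^{1/(p-1)} \le c_p(a^{1/(p-1)}+b^{1/(p-1)})$) together with $\mu \le \omega_1$ to reduce matters to proving
\begin{equation*}
c_pK\,\mathbf{W}_{\alpha,p}^R[P_{l,a,\beta}(F)] \le c_pK\,\mathbf{W}_{\alpha,p}^R[\omega_1] + 2c_pK\,\mathbf{W}_{\alpha,p}^R[\omega_2].
\end{equation*}

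The critical pointwise decomposition would exploit the $\varepsilon$-gap in $\mu_1 = P_{l,a+\varepsilon,\beta}(f)$. I would pick $\delta=\delta(a,\varepsilon,\beta)>0$ with $a(1+1/\delta)^\beta \le a+\varepsilon$ (possible precisely because $\varepsilon>0$) and split $F = (F-f)+f$ by cases according to whether $f\le\delta(F-f)$ or $f>\delta(F-f)$, yielding $P_{l,a,\beta}(F) \le H_l(a(1+\delta)^\beta(F-f)^\beta) + H_l((a+\varepsilon)f^\beta)$, whose second summand is exactly $\mu_1$. A second case splitting applied to $F-f=2c_pK\mathbf{W}_{\alpha,p}^R[\omega_1]+2c_pK\mathbf{W}_{\alpha,p}^R[\omega_2]$ gives
\begin{equation*}
P_{l,a,\beta}(F) \le H_l\bigl(A_2(\mathbf{W}_{\alpha,p}^R[\omega_1])^\beta\bigr) + H_l\bigl(A_2(\mathbf{W}_{\alpha,p}^R[\omega_2])^\beta\bigr) + \mu_1,
\end{equation*}
with $A_2 = a(1+\delta)^\beta(4c_pK)^\beta$. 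Since $\mu_1\le\omega_2$, its Wolff potential is controlled by $\mathbf{W}_{\alpha,p}^R[\omega_2]$; for each of the remaining two pieces I would apply Theorem~\ref{th2} to $\omega_i/M$, noting that $\omega_i/M = \|\mathbf{M}_{\alpha p,R}^{(p-1)(\beta-1)/\beta}[1]\|_\infty^{-1} + \mu/M$ matches its form exactly and that the hypothesis $\|\mathbf{M}_{\alpha p,R}^{(p-1)(\beta-1)/\beta}[\mu/M]\|_\infty\le 1$ is automatic. Using the Wolff scaling $\mathbf{W}_{\alpha,p}^R[\omega_i/M]=M^{-1/(p-1)}\mathbf{W}_{\alpha,p}^R[\omega_i]$, the pointwise inequality $H_l\le\exp$, and the constraint $M^{\beta/(p-1)}A_2\le\delta_0$, Theorem~\ref{th2} yields $\mathbf{W}_{\alpha,p}^R[H_l(A_2(\mathbf{W}_{\alpha,p}^R[\omega_i])^\beta)] \le cM^{-1/(p-1)}\mathbf{W}_{\alpha,p}^R[\omega_i]$; a final calibration of $M$ would then make the three contributions fit inside $c_pK\mathbf{W}_{\alpha,p}^R[\omega_1]+2c_pK\mathbf{W}_{\alpha,p}^R[\omega_2]$. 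For the local integrability \eqref{12321}, I would run the same pointwise decomposition with $2F$ in place of $F$ and invoke the uniform $L^\infty$ bound from Theorem~\ref{th2}.

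The main obstacle will be the simultaneous calibration of constants. The pointwise splitting must land on coefficient exactly $a+\varepsilon$ on $f^\beta$ so that $\mu_1$ absorbs it — this is the structural reason why $\omega_2$ is built from $P_{l,a+\varepsilon,\beta}(f)$ rather than $P_{l,a,\beta}(f)$, and the case $\beta=1$ requires separate (and easier) handling since no Young-type inequality is needed. The threshold $M$ must then reconcile the upper constraint $M^{\beta/(p-1)}A_2\le\delta_0$ (needed to push $A_2$ into the exponent of Theorem~\ref{th2}) with the multiplicative budget on $c_pK\cdot cM^{-1/(p-1)}$ (needed to close the induction against the coefficients $2c_pK$ in $F$). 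When $f\equiv 0$, $\mu_1\equiv 0$ and the $\omega_2$ component disappears entirely, so the argument collapses to a single measure controlled directly by Theorem~\ref{th2}.
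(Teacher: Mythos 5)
Your overall architecture (induction with $F=2c_pK\mathbf{W}_{\alpha,p}^R[\omega_1]+2c_pK\mathbf{W}_{\alpha,p}^R[\omega_2]+f$, a pointwise splitting of $P_{l,a,\beta}(F)$ calibrated so that the $f$-part is absorbed by $\mu_1=P_{l,a+\varepsilon,\beta}(f)$, and an application of Theorem \ref{th2} to the rescaled measures $M^{-1}\omega_i$) is the same as the paper's; your case-splitting in place of the paper's convexity splitting is harmless. The genuine gap is in the key step where you control $\mathbf{W}_{\alpha,p}^R\bigl[H_l\bigl(A_2(\mathbf{W}_{\alpha,p}^R[\omega_i])^\beta\bigr)\bigr]$. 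By using only $H_l\le\exp$ you are forced to impose $A_2M^{\beta/(p-1)}\le\delta_0$ in order to fit inside the exponential of Theorem \ref{th2}, and you then obtain the bound $c\,M^{-1/(p-1)}\mathbf{W}_{\alpha,p}^R[\omega_i]$. Closing the induction requires $c_pK\cdot c\,M^{-1/(p-1)}\le c_pK$, i.e.\ $M\ge c^{p-1}$, while the first constraint forces $M\le(\delta_0/A_2)^{(p-1)/\beta}$; since $c$ and $\delta_0$ are fixed by $N,\alpha,p,\beta,R$ whereas $A_2$ grows with $a$ and $K$, these two requirements are in general incompatible, so the ``final calibration of $M$'' you defer to cannot be carried out. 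Note also that your argument never uses the standing hypothesis $l\beta>p-1$, which is a warning sign that something essential is missing.

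The missing idea is to exploit that $H_l$ vanishes to order $l$ at the origin: for $0<\theta\le1$ one has $\theta^{-l}H_l(s)\le H_l(\theta^{-1}s)$. The paper inserts a factor $\theta=M^{\frac12\left(\frac{\beta}{p-1}+\frac1l\right)}$ before passing to the exponential, which (after the Wolff scaling $\mathbf{W}_{\alpha,p}^R[\lambda\nu]=\lambda^{1/(p-1)}\mathbf{W}_{\alpha,p}^R[\nu]$) turns the estimate into
\begin{equation*}
\mathbf{W}_{\alpha,p}^R\Bigl[H_l\Bigl(\delta_0M^{-\frac12\left(\frac{\beta}{p-1}-\frac1l\right)}\bigl(\mathbf{W}_{\alpha,p}^R[\omega_i]\bigr)^\beta\Bigr)\Bigr]\le c\,M^{\frac{1}{2(p-1)}\left(\frac{l\beta}{p-1}-1\right)}\mathbf{W}_{\alpha,p}^R[\omega_i].
\end{equation*}
Because $l\beta>p-1$, as $M\downarrow0$ the admissible coefficient inside $H_l$ blows up (so one can accommodate any fixed $A_2$, equivalently the paper's $\overline a=a(4c_{a,\varepsilon}c_pK)^\beta$) while the outer constant tends to $0$ (so it can be made $\le1$); both constraints are then satisfied by a single small $M$, and the induction closes. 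With this replacement of your crude bound $H_l\le\exp$, the rest of your argument (including the treatment of \eqref{12321}, for which you should invoke the local integrability of the exponential stated in Theorem \ref{th2} rather than only the $L^\infty$ bound \eqref{23051}, and the $f\equiv0$ case) goes through as in the paper.
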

%
\begin{proof} The proof is based upon Theorems \ref{th2} and \ref{th3}. Set $c_{a,\varepsilon}=2\left(1-\left(\frac{a}{a+\varepsilon}\right)^{1/\beta}\right)^{-1}$ and  $\overline{a}=a\left(4c_{a,\varepsilon}c_pK\right)^\beta$.
If $0<M\le 1$ we define $\gw_1$ and $\gw_2$ by (\ref{omega1}) and (\ref{omega2}) respectively.
 We now assume \[||{\bf M}_{\alpha p,R}^{\frac{{(p - 1)(\beta  - 1)}}{\beta }}[\mu ]|{|_{{L^\infty }({\mathbb{R}^N})}} \le M~~\text{ and }~ ||{\bf M}_{\alpha p,R}^{\frac{{(p - 1)(\beta  - 1)}}{\beta }}[\mu_1 ]|{|_{{L^\infty }({\mathbb{R}^N})}} \le M.\]
We prove first that
\begin{equation}\label{06061}
{\bf W}^{R}_{\alpha,p}\left[H_l\left(\overline{a}\left({\bf W}^{R}_{\alpha,p}[\omega_{i}]\right)^\beta\right)\right]
\le   {\bf W}_{\alpha ,p}^{R}[\omega_{i}]~~~\text{ for }~i=1,2.
\end{equation}
By Theorem \ref{th2}, there exist $c,\delta_0>0$ independent on $\mu$ such that $\exp \left( {\delta_0{{\left( { {\bf W}_{\alpha ,p}^R\left[ M^{-1}\omega_{i}  \right]} \right)}^{\beta}}} \right)$ is locally integrable in $\mathbb{R}^N$ and
\begin{equation*}
{ {\bf W}_{\alpha ,p}^R\left[ {\exp \left( {\delta_0{{\left( { {\bf W}_{\alpha ,p}^R\left[  M^{-1}\omega_{i}  \right]} \right)}^{\beta}}} \right)} \right]}\le c  {\bf W}_{\alpha ,p}^R[M^{-1}\omega_{i}] ~~\text{ in } \mathbb{R}^N.
\end{equation*}
Since $\theta^{-l}H_l(s) \le H_l({\theta ^{ - 1}}s)$ for all  $s \geq 0$ and  $0<\theta\le 1$, it follows
\begin{align*}
{\bf W}^{R}_{\alpha,p}\left[M^{-\frac{1}{2}\left(\frac{\beta l}{p-1}+1\right)}H_l\left(\delta_0M^{-\frac{1}{2}\left(\frac{\beta}{p-1}-\frac{1}{l}\right)}\left({\bf W}^{R}_{\alpha,p}[\omega_{i}]\right)^\beta\right)\right]&\le {\bf W}^{R}_{\alpha,p}\left[H_l\left(\delta_0 M^{-\frac{\beta}{p-1}}\left({\bf W}^{R}_{\alpha,p}[\omega_{i}]\right)^\beta\right)\right]\\&\le  {\bf W}^{R}_{\alpha,p}\left[\exp\left(\delta_0 \left({\bf W}^{R}_{\alpha,p}[M^{-1}\omega_{i}]\right)^\beta\right)\right]
\\&\le c M^{-\frac{1}{p-1}} {\bf W}_{\alpha ,p}^R[\omega_{i}].
\end{align*}
Hence, 
\begin{eqnarray*}
{\bf W}^{R}_{\alpha,p}\left[H_l\left(\delta_0M^{-\frac{1}{2}\left(\frac{\beta}{p-1}-\frac{1}{l}\right)}\left({\bf W}^{R}_{\alpha,p}[\omega_{i}]\right)^\beta\right)\right]
\le  c M^{\frac{1}{2(p-1)}\left(\frac{\beta l}{p-1}-1\right)} {\bf W}_{\alpha ,p}^R[\omega_{i}].
\end{eqnarray*}
Therefore \eqref{06061} is achieved if we prove 
\begin{equation*}
\overline{a}\le \delta_0M^{-\frac{1}{2}\left(\frac{\beta}{p-1}-\frac{1}{l}\right)} ~\text{ and }~ c M^{\frac{1}{2(p-1)}\left(\frac{\beta l}{p-1}-1\right)}\le 1,
\end{equation*}
which is equivalent to 
\begin{equation*}
M\le \left(\delta_0\overline{a}^{-1}\right)^{\left(\frac{1}{2}\left(\frac{\beta}{p-1}-\frac{1}{l}\right)\right)^{-1}}\wedge c^{-\left(\frac{1}{2(p-1)}\left(\frac{\beta l}{p-1}-1\right)\right)^{-1}}.
\end{equation*}
Thus, we choose $M=1\wedge \left(\delta_0\overline{a}^{-1}\right)^{\left(\frac{1}{2}\left(\frac{\beta}{p-1}-\frac{1}{l}\right)\right)^{-1}}\wedge c^{-\left(\frac{1}{2(p-1)}\left(\frac{\beta l}{p-1}-1\right)\right)^{-1}}$; we obtain \eqref{06061} and the fact that $H_l\left(\overline{a}\left({\bf W}_{\alpha ,p}^{R}[\omega_{i} ]\right)^\beta \right)\in L^1_{loc}(\mathbb{R}^N)$.\\
Now, we prove \eqref{1232} by induction. Clearly, \eqref{1232} holds with $m=0$. Next we assume that \eqref{1232} holds with $m=n$, and we claim that 
\begin{equation}\label{06062}
{u_{n+1}} \le 2c_p K {\bf W}_{\alpha ,p}^{R}[\omega_1 ] + 2c_pK{\bf W}_{\alpha,p}^{R}[\omega_2]+ f.
\end{equation}
In fact, since \eqref{1232} holds with $m=n$ and  $P_{l,a,\beta}$ is convex, we have
\begin{align*}
{P_{l,a,\beta}\left(u_{n}\right)} &\le P_{l,a,\beta}\left({4c_p K {\bf W}_{\alpha ,p}^{R}[\omega_1 ] + 4c_pK{\bf W}_{\alpha,p}^{R}[\omega_2]+ f}\right)\\&\le P_{l,a,\beta}\left(4c_{a,\varepsilon}c_p K {\bf W}_{\alpha ,p}^{R}[\omega_1 ]\right) + P_{l,\varepsilon,a}\left(4c_{a,\varepsilon}c_p K {\bf W}_{\alpha ,p}^{R}[\omega_2]\right)+P_{l,a,\beta}\left(\left(1+\frac{\varepsilon}{a}\right)^{1/\beta}f\right)\\&= H_l\left(\overline{a}\left({\bf W}_{\alpha ,p}^{R}[\omega_1 ]\right)^\beta \right)+H_l\left(\overline{a}\left({\bf W}_{\alpha ,p}^{R}[\omega_2 ]\right)^\beta \right)+P_{l,a+\varepsilon,\beta}(f).
\end{align*} 
From this we derive \eqref{12321}. By the definition of $u_{n+1}$ and the sub-additive property of ${\bf W}^{R}_{\alpha,p}[.]$, we obtain 
\begin{align*}
{u_{n + 1}} &\leq K {\bf W}_{\alpha ,p}^{R}\left[H_l\left(\overline{a}\left({\bf W}_{\alpha ,p}^{R}[\omega_1 ]\right)^\beta \right)+H_l\left(\overline{a}\left({\bf W}_{\alpha ,p}^{R}[\omega_2 ]\right)^\beta \right)+P_{l,a+\varepsilon,\beta}(f) +\mu\right]+f\\&\le   c_pK {\bf W}_{\alpha ,p}^{R}\left[H_l\left(\overline{a}\left({\bf W}_{\alpha ,p}^{R}[\omega_1 ]\right)^\beta \right) \right]+ c_pK {\bf W}_{\alpha ,p}^{R}\left[H_l\left(\overline{a}\left({\bf W}_{\alpha ,p}^{R}[\omega_2 ]\right)^\beta \right) \right]\\&~\phantom{------} +c_pK {\bf W}_{\alpha ,p}^{R}\left[P_{l,a+\varepsilon,\beta}(f) \right]+c_pK {\bf W}_{\alpha ,p}^{R}\left[\mu\right]+f.
\end{align*}
Hence follows \eqref{06062} from \eqref{06061}. This completes the proof of the theorem.
\end{proof}\\

The next result is obtained by an easy adaptation of the proof Theorem \ref{TH3}.

\begin{theorem}\label{TH3-B}
Assume that the assumptions on $\ga$, $p$, $a$, $l$, $\beta$, $\varepsilon$, $f$, $\gm_1$ and $\gm$ of  Theorem \ref{MT6} are fulfilled and $R,K$ are positive real numbers. Suppose that  $\{u_m\}$ is a sequence of nonnegative measurable functions in $\mathbb{R}^N$ that satisfies 
  \begin{equation} \label {ite-1}\begin{array}{ll}
  {u_{m + 1}} \leq K {\bf W}_{\alpha ,p}[P_{l,a,\beta}(u_m ) + \mu ]+f\quad\forall m \in\BBN,\\[2mm]
  \phantom{--}
{u_0} \leq K {\bf W}_{\alpha ,p}[\mu ]+f.
  \end{array}\end{equation}
   Then there exists $M>0$ depending on $N,\alpha,p,l,a,\beta,\varepsilon, K$ and $R$  such that if 
  \begin{equation*}\label{13062}
  ||{\bf M}_{\alpha p}^{\frac{{(p - 1)(\beta  - 1)}}{\beta }}[\mu ]|{|_{{L^\infty }({\mathbb{R}^N})}} \le M~~\text{ and }~||{\bf M}_{\alpha p}^{\frac{{(p - 1)(\beta  - 1)}}{\beta }}[\mu_1 ]|{|_{{L^\infty }({\mathbb{R}^N})}} \le M,
  \end{equation*}
there holds
\begin{equation}\label{12320}
    {P_{l,a,\beta}\left( {{{ {4c_p K {\bf W}_{\alpha ,p}[\omega_3 ] + 4c_pK{\bf W}_{\alpha,p}[\omega_4]+ f} }}\;} \right) \in L^1(\mathbb{R}^N)},
    \end{equation}
    and 
\begin{equation}\label{1233}
{u_m} \le 2c_p K {\bf W}_{\alpha ,p}[\omega_3 ] + 2c_pK{\bf W}_{\alpha,p}[\omega_4]+ f\quad\forall m \in\BBN,
\end{equation}
where 
\begin{equation}\label{omega3}
\omega_3=M||{\bf M}_{\alpha p}^{\frac{(p-1)(\beta-1)}{\beta}} [\chi_{B_{R}}] ||^{-1}_{{L^\infty }(\mathbb{R}^N)}\chi_{B_{R}}+\mu,    \end{equation}
and
\begin{equation}\label{omega4}\omega_4=M||{\bf M}_{\alpha p}^{\frac{(p-1)(\beta-1)}{\beta}} [\chi_{B_{R}}] ||^{-1}_{{L^\infty }(\mathbb{R}^N)}\chi_{B_{R}}+\mu_1.
\end{equation}  
Furthermore, if $f\equiv 0$ then \eqref{12320} and \eqref{1233} are satisfied with $\omega_4\equiv 0$.
\end{theorem}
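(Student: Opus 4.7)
My plan is to run the argument of Theorem~\ref{TH3} verbatim, replacing the truncated Wolff potential $\mathbf{W}_{\alpha,p}^R$ by the untruncated version $\mathbf{W}_{\alpha,p}$ and using Theorem~\ref{th3} in the place held by Theorem~\ref{th2}. The compact-support hypothesis on $\mu,\mu_1$ together with the strengthened arithmetic condition $l\beta > N(p-1)/(N-\alpha p)$ are exactly the hypotheses of Theorem~\ref{th3}, so the substitution is licit. I would set $c_{a,\varepsilon}=2(1-(a/(a+\varepsilon))^{1/\beta})^{-1}$ and $\overline{a}=a(4c_{a,\varepsilon}c_pK)^\beta$ as in Theorem~\ref{TH3}.

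The heart of the argument is to establish the analogue of (\ref{06061}), namely
\begin{equation*}
\mathbf{W}_{\alpha,p}\bigl[H_l\bigl(\overline{a}(\mathbf{W}_{\alpha,p}[\omega_i])^\beta\bigr)\bigr] \le \mathbf{W}_{\alpha,p}[\omega_i], \quad i=3,4,
\end{equation*}
together with the global integrability $H_l(\overline{a}(\mathbf{W}_{\alpha,p}[\omega_i])^\beta)\in L^1(\mathbb{R}^N)$. I would apply Theorem~\ref{th3} to $M^{-1}\omega_i$, which is supported in $B_R$ and satisfies (\ref{08061}) by construction; this yields constants $C$ and $\delta_1$ with $\mathbf{W}_{\alpha,p}[H_l(\delta_1(\mathbf{W}_{\alpha,p}[M^{-1}\omega_i])^\beta)] \le C \mathbf{W}_{\alpha,p}[M^{-1}\omega_i]$. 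Combining the $1/(p-1)$-homogeneity of $\mathbf{W}_{\alpha,p}[\cdot]$ in the measure with the elementary scaling $\theta^{-l}H_l(s)\le H_l(\theta^{-1}s)$ for $0<\theta\le 1$, applied to a suitable power of $M$ exactly as in Theorem~\ref{TH3}, leads to
\begin{equation*}
\mathbf{W}_{\alpha,p}\bigl[H_l\bigl(\delta_1 M^{-\frac{1}{2}(\frac{\beta}{p-1}-\frac{1}{l})}(\mathbf{W}_{\alpha,p}[\omega_i])^\beta\bigr)\bigr] \le C\,M^{\frac{1}{2(p-1)}(\frac{\beta l}{p-1}-1)}\,\mathbf{W}_{\alpha,p}[\omega_i].
\end{equation*}
Under $l\beta>p-1$ (a consequence of $l\beta>N(p-1)/(N-\alpha p)$) both exponents of $M$ are strictly positive, so $M$ can be chosen small enough to enforce simultaneously $\overline{a}\le\delta_1 M^{-\frac{1}{2}(\frac{\beta}{p-1}-\frac{1}{l})}$ and $CM^{\frac{1}{2(p-1)}(\frac{\beta l}{p-1}-1)}\le 1$, which proves the claim.

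The induction on $m$ proving (\ref{1233}) then reproduces the one in Theorem~\ref{TH3}. Assuming (\ref{1233}) holds at step $n$, convexity of $P_{l,a,\beta}$ and the choice of $c_{a,\varepsilon},\overline{a}$ yield
\begin{equation*}
P_{l,a,\beta}(u_n) \le H_l\bigl(\overline{a}(\mathbf{W}_{\alpha,p}[\omega_3])^\beta\bigr) + H_l\bigl(\overline{a}(\mathbf{W}_{\alpha,p}[\omega_4])^\beta\bigr) + P_{l,a+\varepsilon,\beta}(f),
\end{equation*}
which establishes (\ref{12320}); applying $\mathbf{W}_{\alpha,p}$ to the recurrence, invoking its subadditivity up to the constant $c_p$, the key estimate above, the bound $\mu\le\omega_3$, and the bound $\mathbf{W}_{\alpha,p}[P_{l,a+\varepsilon,\beta}(f)]=\mathbf{W}_{\alpha,p}[\mu_1]\le\mathbf{W}_{\alpha,p}[\omega_4]$, closes the induction. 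The case $f\equiv 0$ collapses to the same argument with $\mu_1\equiv 0$ and $\omega_4\equiv 0$.

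The main obstacle is the simultaneous fulfilment of the two conditions on $M$ in the key estimate above: this is precisely where the stronger hypothesis $l\beta > N(p-1)/(N-\alpha p)$ of Theorem~\ref{MT6} becomes essential, since it is both the integrability hypothesis of Theorem~\ref{th3} (ensuring the finite Wolff potential $\mathbf{W}_{\alpha,p}[H_l(\cdot)]$ even without truncation) and is strictly stronger than $l\beta>p-1$, which is what makes the exponents of $M$ above strictly positive and allows the parameter $M$ to be chosen small.
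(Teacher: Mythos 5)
Your proposal is correct and is essentially the paper's own argument: the paper proves Theorem \ref{TH3-B} precisely as ``an easy adaptation of the proof of Theorem \ref{TH3}'', and your adaptation—replacing ${\bf W}_{\alpha,p}^R$ by ${\bf W}_{\alpha,p}$, invoking Theorem \ref{th3} (whose hypotheses $\supp\subset B_R$, $l\beta>\frac{N(p-1)}{N-\alpha p}$ are exactly those of Theorem \ref{MT6}) in place of Theorem \ref{th2}, and rerunning the scaling-in-$M$ choice and the induction—is the intended one. The details you give (homogeneity of the Wolff potential, $\theta^{-l}H_l(s)\le H_l(\theta^{-1}s)$, positivity of the exponents of $M$ from $l\beta>p-1$, and the closing of the induction via ${\bf W}_{\alpha,p}[\mu_1]\le{\bf W}_{\alpha,p}[\omega_4]$) match the paper's computation in Theorem \ref{TH3}.
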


Let $P\in C(\mathbb{R}^+)$ be a decreasing positive function. The $(\alpha,P)$-Orlicz-Bessel capacity of a Borel set $E\subset\mathbb{R}^N$ is defined by (see \cite[Sect 2.6]{AH})
\[\text{Cap}_{{\mathbf{G}_\alpha },P}(E) = \inf \left\{ {\int_{\mathbb{R}^N} {P(f)} :{\mathbf{G}_\alpha }\ast f \geq {\chi _E},f \geq 0,P(f) \in {L^1}({\mathbb{R}^N})} \right\},\]
and the $(\alpha,P)$-Orlicz-Riesz capacity
\[\text{Cap}_{{\mathbf{I}_\alpha },P}(E) = \inf \left\{ {\int_{\mathbb{R}^N} {P(f)} :{\mathbf{I}_\alpha }\ast f \geq {\chi _E},f \geq 0,P(f) \in {L^1}({\mathbb{R}^N})} \right\}.\]
\begin{theorem}\label{TH1} Let $\alpha>0$, $p>1$, $a>0$, $c>0$, $l\in \mathbb{N}^*$ and $\beta\geq 1$ such that $l\beta>p-1$ and $0<\alpha p <N$. Let $\mu\in \mathfrak{M}^+(\mathbb{R}^N)$.\\
{\bf 1}. Let $0<R\le \infty$. If $u$ is a   nonnegative Borel function in $\mathbb{R}^N$ such that  $P_{l,a,\beta}(u)$ is locally integrable in $\mathbb{R}^N$ and
\begin{equation}
u(x)\geq c {\bf W}^R_{\alpha,p}[P_{l,a,\beta}(u)+\mu](x)\qquad\forall x\in \mathbb{R}^N,
\end{equation}
then the following statements holds. \\
(i) If $R<\infty$, there exists a positive constant $C_1$ depending on $N,\alpha,p,l,a,\beta,c$ and $R$ such that
\begin{equation}
\label{06064}\int_E P_{l,a,\beta}(u)dx+\mu(E)  \le C_1\text{Cap}_{{\mathbf{G}_{\alpha p}},Q_p^*}(E)\qquad\forall E\subset \mathbb{R}^N,\, E\text{ Borel}.
\end{equation} 
(ii) If $R=\infty$, there exists a positive constant $C_2$ depending on $N,\alpha,p,l,a,\beta,c$ such that
\begin{equation}
\label{06065}\int_E P_{l,a,\beta}(u)dx+\mu(E)  \le C_2\text{Cap}_{{\mathbf{I}_{\alpha p}},Q_p^*}(E)\qquad\forall E\subset \mathbb{R}^N,\, E\text{ Borel}.
\end{equation} 
{\bf 2}. Let $\Omega$ be a bounded domain in $\mathbb{R}^N$, $\mu\in \mathfrak{M}^+(\Omega)$ and $\delta\in (0,1)$. If $u$ is a nonnegative Borel function in $\Omega$ such that  $P_{l,a,\beta}(u)$ is locally integrable in  $\Omega$ and 
\begin{equation}\label{25032}
u(x)\geq c{\bf W}^{\delta d(x,\partial\Omega)}_{\alpha,p}[P_{l,a,\beta}(u)+\mu](x)\qquad\forall x\in \Omega,
\end{equation}
then,  for any compact set $K\subset \Omega$, there exists a positive constant $C_3$ depending on $N,\alpha,p,l,a,\beta,c, \delta$ and $dist(K,\partial\Omega)$ such that 
\begin{equation}\label{08063}
\int_E {P_{l,a,\beta}(u)dx}  + \mu (E) \leq C_3 \text{Cap}_{{\mathbf{G}_{\alpha p}},Q_p^*}(E)\quad\forall E\subset K,E\textrm{  Borel},
\end{equation}
where  $Q_p^*$ is the complementary function to $Q_p$.\\

\end{theorem}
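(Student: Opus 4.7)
\medskip
\noindent\textbf{Plan.} The strategy is to convert the pointwise Wolff lower bound on $u$ into a modular integrability estimate for a linear (Bessel or Riesz) potential of $\nu := P_{l,a,\beta}(u)\,dx + \mu$, and then to close by a Young-type duality with the Orlicz capacity. I focus on part~1(i) ($R<\infty$); parts~1(ii) and~2 are variants of the same scheme.

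\emph{Step 1 (modular bound).} For every Borel set $E\subset\mathbb{R}^{N}$, set $\nu^{E}:=\chi_{E}\nu$. By monotonicity of $\mathbf{W}_{\alpha,p}^{R}[\cdot]$ in the measure, the hypothesis yields $u\geq c\,\mathbf{W}_{\alpha,p}^{R}[\nu^{E}]$ pointwise. Since $\mathbf{W}_{\alpha,p}^{R}[\nu^{E}]$ is supported in the $R$-neighborhood $E^{R}:=\{d(\cdot,E)\leq R\}$, and since $P_{l,a,\beta}(u)\geq (a^{q}/q!)u^{\beta q}$ for every $q\geq l$, one gets
\[
\int_{\mathbb{R}^{N}}\bigl(\mathbf{W}_{\alpha,p}^{R}[\nu^{E}]\bigr)^{\beta q}\,dx \;\leq\; c^{-\beta q}\!\int_{E^{R}}u^{\beta q}\,dx \;\leq\; \frac{q!}{a^{q}c^{\beta q}}\,\nu(E^{R}).
\]
Since $l\beta>p-1$ forces $\beta q\geq p-1$ for all $q\geq l$, Theorem~\ref{VHVth1}(2) then bounds $\int_{\mathbb{R}^{N}}(\mathbf{G}_{\alpha p}[\nu^{E}])^{\beta q/(p-1)}\,dx$ by $(c_{2}\beta q)^{\beta q/(p-1)}$ times the previous quantity. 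Multiplying each such inequality by $\lambda^{\beta q/(p-1)}/(q^{\beta q/(p-1)}q!)$ and summing over $q\geq l$ rearranges the left-hand side into $\int Q_{p}(\lambda\,\mathbf{G}_{\alpha p}[\nu^{E}])\,dx$ and turns the right-hand side into a geometric series of ratio $(\lambda c_{2}\beta)^{\beta/(p-1)}/(ac^{\beta})$, which converges as soon as $\lambda$ is chosen small enough. This gives the fundamental modular estimate
\[
\int_{\mathbb{R}^{N}}Q_{p}\!\bigl(\lambda\,\mathbf{G}_{\alpha p}[\nu^{E}]\bigr)\,dx \;\leq\; C_{0}\,\nu(E^{R}).
\]

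\emph{Step 2 (duality).} For any admissible $f\geq 0$ with $\mathbf{G}_{\alpha p}*f\geq\chi_{E}$ and $Q_{p}^{*}(f)\in L^{1}$, Fubini gives $\nu(E)\leq \int f\,\mathbf{G}_{\alpha p}[\nu^{E}]\,dx$. Combined with the modular estimate of Step~1 and a suitable rescaling in Young's inequality for the complementary pair $(Q_{p},Q_{p}^{*})$, this yields $\nu(E)\leq C_{1}\!\int Q_{p}^{*}(f)\,dx$, and the infimum over $f$ delivers~\eqref{06064}. Part~1(ii) follows by the same reasoning with $\mathbf{G}_{\alpha p}$ replaced by $\mathbf{I}_{\alpha p}$, using parts~(1) and~(3) of Theorem~\ref{VHVth1} in place of~(2); this produces~\eqref{06065}. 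For part~2, the observation $\delta d(x,\partial\Omega)\geq r_{0}:=\delta\,\mathrm{dist}(K,\partial\Omega)>0$ for $x\in K$ yields $u\geq c\,\mathbf{W}_{\alpha,p}^{r_{0}}[\nu]$ on~$K$; the argument of part~1(i) then runs with $R=r_{0}$ and all integrations confined to the $r_{0}$-enlargement of~$K$, which forces the constant to depend on $\mathrm{dist}(K,\partial\Omega)$.

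\emph{Main obstacle.} I expect the delicate point to be the final passage from the modular bound, which controls $\mathbf{G}_{\alpha p}[\nu^{E}]$ in terms of $\nu(E^{R})$ rather than $\nu(E)$, to the genuinely multiplicative estimate $\nu(E)\leq C\,\text{Cap}(E)$. A straightforward application of Young's inequality produces only an additive bound of the form $\nu(E)\leq \lambda^{-1}\text{Cap}(E)+C_{0}\nu(E^{R})$, and since $Q_{p}^{*}$ is not homogeneous this additive term does not automatically disappear under rescaling. Closing the gap requires exploiting the super-exponential growth of $Q_{p}$, either through an Orlicz--H\"older/Luxemburg-norm argument or via a Maz'ya--Verbitsky style equivalence that characterizes ``$\nu\leq C\,\text{Cap}$'' by exactly the kind of modular testing condition produced in Step~1; this is the only step of the proof that goes beyond the Wolff/Bessel/Riesz comparisons already established in the section.
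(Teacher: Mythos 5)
There is a genuine gap, and it is exactly the one you flag at the end: your Step 1 only yields $\int_{\mathbb{R}^N}Q_p(\lambda\,{\bf G}_{\alpha p}[\nu^E])\,dx\le C_0\,\nu(E^R)$, with the enlarged set $E^R$ on the right, and no amount of rescaling in Young's inequality or appeal to the growth of $Q_p$ will convert the resulting additive bound $\nu(E)\le \lambda^{-1}C_0\,\nu(E^R)+\lambda^{-1}\int Q_p^*(f)\,dx$ into $\nu(E)\le C\,\mathrm{Cap}(E)$: the parameter $\lambda$ is already frozen small to make your geometric series converge, and $\nu(E^R)$ is in general not comparable to $\nu(E)$, let alone to the capacity of $E$ (think of $E$ a small set sitting inside a region where $\nu$ is huge). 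So the "main obstacle" you describe is not a technical refinement left to the reader; it is the heart of the proof, and your two suggested remedies (Orlicz--H\"older, or a Maz'ya--Verbitsky testing equivalence) are not what closes it.

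The missing idea in the paper is a weighted maximal-function localization that produces the modular estimate with $\omega(E)$ itself on the right, namely \eqref{06063}. Writing $d\omega=P_{l,a,\beta}(u)\,dx+d\mu$, one has $\omega_E(B_t(x))\le M_\omega\chi_E(x)\,\omega(B_t(x))$ for the centered maximal function relative to $\omega$, hence ${\bf W}^R_{\alpha,p}[\omega_E]\le (M_\omega\chi_E)^{\frac1{p-1}}{\bf W}^R_{\alpha,p}[\omega]$ pointwise, and since $M_\omega\chi_E\le1$ and $H_l$ starts at order $l$, $(M_\omega\chi_E)^{\frac{l\beta}{p-1}}P_{l,a,\beta}(c{\bf W}^R_{\alpha,p}[\omega])\ge P_{l,a,\beta}(c{\bf W}^R_{\alpha,p}[\omega_E])$. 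Testing the hypothesis $P_{l,a,\beta}(c{\bf W}^R_{\alpha,p}[\omega])\,dx\le d\omega$ against the weight $(M_\omega\chi_E)^{\frac{l\beta}{p-1}}$ and using the boundedness of $M_\omega$ on $L^{s}(d\omega)$ with $s=\frac{l\beta}{p-1}>1$ (Fefferman's result, valid for arbitrary Radon measures; this is precisely where $l\beta>p-1$ enters) gives $\int_{\mathbb{R}^N}P_{l,a,\beta}(c{\bf W}^R_{\alpha,p}[\omega_E])\,dx\le c\,\omega(E)$. Your exchange of Wolff for Bessel/Riesz potentials via Theorem \ref{VHVth1} and the term-by-term summation defining $Q_p$ are then exactly as in the paper, and the duality step closes because the additive term produced by Young's inequality is now $\tfrac12\omega(E)$, which absorbs into the left-hand side (with the $\Delta_2$-property of $Q_p^*$ handling the constants). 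The same device, not a crude restriction to a truncation radius $r_0$ on $K$, is what makes Part 2 work: one keeps the variable radius $\delta d(x,\partial\Omega)$, obtains \eqref{25033}, and only afterwards observes that for $E\subset K$ the truncated potential ${\bf W}^{\delta d(x,\partial\Omega)}_{\alpha,p}[\omega_E]$ dominates ${\bf W}^{\delta r_K/8}_{\alpha,p}[\omega_E]$ on all of $\Omega$ and vanishes outside, so the estimate extends to $\mathbb{R}^N$ as in \eqref{ine4}.
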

\begin{proof} Set $d\omega=P_{l,a,\beta}(u)dx+d\mu$.\\
\textbf{1.} We have $$P_{l,a,\beta}\left(c{\bf W}^{R}_{\alpha,p}[\omega]\right)dx\le d\omega ~~\textrm{ in } \mathbb{R}^N.$$
Let $M_\omega$ denote the centered Hardy-Littlewood maximal function which is defined for any $f\in L_{loc}^1(\BBR^N,d\gw)$  by 
\[{M_\omega }f(x) = \sup _{t > 0} \frac{1}{\omega (B_t(x))}\myint{B_t(x)}{}|f|d\omega  .\]
If $E\subset\BBR^N$ is a Borel set, we have
\[\myint{\mathbb{R}^N}{} {{{\left( {{M_{\omega} }{\chi _E}} \right)}^{\frac{{l\beta }}{{p - 1}}}}P_{l,a,\beta}\left( { c{\bf W}_{\alpha ,p}^R[\omega ]} \right)dx}  \le \myint{\mathbb{R}^N}{} {{{\left( {{M_{\omega} }{\chi _E}} \right)}^{\frac{{l\beta }}{{p - 1}}}}d\omega }.\]
Since $M_\omega$ is bounded on $L^s(\mathbb{R}^N,d\omega)$, $s>1$, we deduce from Fefferman's result \cite{Fe} that
\[\int_{\mathbb{R}^N} {{{\left( {{M_{\omega} }{\chi _E}} \right)}^{\frac{{l\beta }}{{p - 1}}}}P_{l,a,\beta}\left( { c{\bf W}_{\alpha ,p}^R[\omega ]} \right)dx}  \le c_{51}{\omega }(E),\]
for some constant $c_{51}$ only depends on $N$ and $\frac{l\beta}{p-1}$.  Since ${{M_\omega }{\chi _E}}\le 1$, we derive 
\begin{eqnarray*}
 {\left( {{M_\omega }{\chi _E}(x)} \right)^{\frac{{l\beta }}{{p - 1}}}}P_{l,a,\beta}\left( c{\bf W}_{\alpha ,p}^{R}[\omega ](x) \right) &\geq& P_{l,a,\beta}\left(c\left( M_\omega \chi _E(x)\right)^{\frac{1}{p - 1}} {\bf W}_{\alpha ,p}^{R}[\omega ](x) \right) \\ 
  &\geq& P_{l,a,\beta}\left(  c{\bf W}_{\alpha ,p}^{R}[\omega _E](x) \right),
\end{eqnarray*}
where $\omega_E=\chi_E\omega$. Thus
 \begin{equation}\label{06063}
\int_{\mathbb{R}^N} {P_{l,a,\beta}\left( { c{\bf W}_{\alpha ,p}^R[\omega_E ]} \right)dx}  \le c_{51}{\omega }(E)\qquad\forall E \subset \mathbb{R}^N, E\text{ Borel}.
\end{equation}
From \eqref{ine0}, \eqref{ine2} and \eqref{ine3} we get
\[\int_{\mathbb{R}^N} P_{l,a,\beta}\left( c {\bf W}_{\alpha ,p}^{R}[\omega _E](x) \right)dx  \geq \int_{\mathbb{R}^N} Q_p\left( c_{52}{\bf G}_{\alpha p}[\omega _E](x) \right)dx~~\text{ if } R<\infty,\] 
and 
\[\int_{\mathbb{R}^N} P_{l,a,\beta}\left( c {\bf W}_{\alpha ,p}^{R}[\omega _E](x) \right)dx  \geq \int_{\mathbb{R}^N} Q_p\left( c_{53}{\bf I}_{\alpha p}[\omega _E](x) \right)dx~~\text{ if } R=\infty,\] 
where $Q_p$ is defined by $(\ref{EQ13})$ and $c_{52} = (c_2\beta )^{ - 1}a^{\frac{p - 1}{\beta }}c^{p - 1}$ if $p\not= 2$, $c_{52}= c_3^{ - 1}a^{\frac{1}{\beta }}c $ if $p=2$ (the constants $c_2,c_3$ defined in \eqref{ine2} and \eqref{ine3}, depend on $R$, therefore $c_{52}=c_{52}(r_K)$) and $c_{53} = (c_1\beta )^{ - 1}a^{\frac{p - 1}{\beta }}c^{p - 1}$ if $p\not= 2$, $c_{53}= a^{\frac{1}{\beta }}c $ if $p=2$.
Thus, from \eqref{06063} we obtain that for all Borel set $E\subset\mathbb{R}^N$ there holds
 \[\int_{{\mathbb{R}^N}} {{Q_p}\left( {{c_{52}}{{\bf G}_{\alpha p}}[{\omega _E}](x)} \right)dx}  \le {c_{51}}\omega (E) ~\text{if } R<\infty,\]
 and 
  \[\int_{{\mathbb{R}^N}} {{Q_p}\left( {{c_{53}}{{\bf I}_{\alpha p}}[{\omega _E}](x)} \right)dx}  \le {c_{51}}\omega (E) ~\textrm{if } R=\infty.\]
We recall that  $Q_p^*(s)=\sup_{t>0}\{st-Q_p(t)\}$ satisfies the sub-additivity $\Delta_2$-condition (see Chapter 2 in \cite{ReRa}).\\
(i) We assume $R<\infty$.
 For every $f\geq 0$, $Q^*_p(f)\in L^1(\Omega)$ such that ${\bf G}_{\alpha p}\ast f\geq \chi_E$, we have 
 \begin{align*}
  \omega (E) &\le \int_{\mathbb{R}^N} {{ \bf G}_{\alpha p}}\ast fd{\omega _E}  = {(2{c_{51}})^{ - 1}}\int_{\mathbb{R}^N} {\left( {{c_{52}}{{\bf G}_{\alpha p}}\left[ {{\omega _E}} \right]} \right)\left( {2{c_{51}}c_{52}^{ - 1}f} \right)dx}  \\ 
   &\le{(2{c_{51}})^{ - 1}}\int_{\mathbb{R}^N} {{Q_p}\left( {{c_{52}}{{\bf G}_{\alpha p}}\left[ {{\omega _E}} \right]} \right)dx}  + {(2{c_{51}})^{ - 1}}\int_{\mathbb{R}^N} {Q_p^*\left( {2{c_{51}}c_{52}^{ - 1}f} \right)dx}  \\ 
   &\le {2^{ - 1}}\omega (E) + {c_{54}}\int_{\mathbb{R}^N} {Q_p^*\left( f \right)dx},
 \end{align*}
the last inequality following from the $\Delta_2$-condition.
Notice that $c_{54}$, as well as the next constant $c_{55}$, depends on $r_K$. Thus, \[\omega (E) \leq 2c_{54}\int_{\mathbb{R}^N} {Q_p^*\left( f \right)dx}. \]
Then, we get \[\omega (E) \le {c_{55}}\text{Cap}_{{\mathbf{G}_{\alpha p}},Q_p^*}(E)\qquad\forall E \subset \mathbb{R}^N, E\text{ Borel}.\]
Which implies \eqref{06064}.\\
(ii) We assume $R=\infty$. For every $f\geq 0$, $Q^*_p(f)\in L^1(\Omega)$ such that ${ \bf I}_{\alpha p}\ast f\geq \chi_E$, since $\mathbf{I}_{\alpha p}*\omega_E=\mathbf{I}_{\alpha p}[\omega_E]$, as above we have 
\begin{align*}
 \omega (E) &\le \int_{\mathbb{R}^N} {{\bf  I}_{\alpha p}}\ast fd{\omega _E} = \int_{\mathbb{R}^N} {\left( {{\bf I}_{\alpha p}}* {{\omega _E}} \right)fdx} = \int_{\mathbb{R}^N} {{{\bf I}_{\alpha p}}\left[ {{\omega _E}} \right]fdx}   \\ 
  &\le {2^{ - 1}}\omega (E) + {c_{56}}\int_{\mathbb{R}^N} {Q_p^*\left( f \right)dx},
\end{align*}
Then, it follows \eqref{06065}.\\
\textbf{ 2.}  Let $K\subset\Omega$ be compact. Set $r_K=dist(K,\partial \Omega)$ and $\Omega_K=\{x\in\Omega: d(x,K)<r_K/2\}$. We have 
$$P_{l,a,\beta}\left(c{\bf W}^{\delta d(x,\partial\Omega)}_{\alpha,p}[\omega]\right)dx\le d\omega ~~\textrm{ in } \Omega.$$
Thus, for any Borel set $E\subset K$  
\[\int_\Omega  {{{\left( {{M_\omega }{\chi _E}} \right)}^{\frac{{l\beta }}{{p - 1}}}}P_{l,a,\beta}\left(  c{\bf W}_{\alpha ,p}^{\delta d(x,\partial \Omega )}[\omega ] \right)dx}  \le \int_\Omega  {{{\left( {{M_\omega }{\chi _E}} \right)}^{\frac{{l\beta }}{{p - 1}}}}d\omega }. \]
As above we get
\begin{equation}\label{25033}
\int_\Omega  P_{l,a,\beta}\left( c {\bf W}_{\alpha ,p}^{\delta d(x,\partial \Omega )}[\omega _E](x) \right)dx  \leq {c_{51}}\omega (E)\quad\forall E\subset K, E\textrm{ Borel}.
\end{equation}
Note that if $x\in \Omega$ and $d(x,\partial \Omega)\le r_K/8$, then $B_t(x)\subset \Omega\backslash \Omega_K$ for all $t\in (0,\delta d(x,\partial\Omega))$;
 indeed, for all $y\in B_t(x)$\\
\[d(y,\partial \Omega ) \le d(x,\partial \Omega ) + |x - y| < (1 + \delta )d(x,\partial \Omega ) < \frac{1}{4}{r_K},\]
thus
\[d(y,K) \geq d(K,\partial \Omega ) - d(y,\partial \Omega ) > \frac{3}{4}{r_K} > \frac{1}{2}{r_K},\]
which implies $y\notin \Omega_K$. 
We deduce that  
\[ {\bf W}_{\alpha ,p}^{\delta d(x,\partial \Omega )}[{\omega _E}](x) \geq  {\bf W}_{\alpha ,p}^{\frac{\delta }{8}{r_K}}[{\omega _E}](x)\qquad\forall  x\in \Omega,\]
and 
\[ {\bf W}_{\alpha ,p}^{\frac{\delta }{8}{r_K}}[{\omega _E}](x) = 0\qquad\forall  x\in \Omega^c.\]
Hence we obtain from \eqref{25033},
\begin{equation}\label{ine4}\int_{\mathbb{R}^N}  P_{l,a,\beta}\left( c{\bf W}_{\alpha ,p}^{\frac{\delta }{8}{r_K}}[{\omega _E}](x)\right)dx  \le {c_{51}}\omega (E)\qquad\forall  E\subset K,\, E\text{ Borel}.
\end{equation}
As above we also obtain  \[\omega (E) \le {c_{57}}\text{Cap}_{{\mathbf{G}_{\alpha p}},Q_p^*}(E)\qquad\forall  E\subset K,\, E\text{ Borel},\]
where the positive constant $c_{57}$ depends on $r_K$. Inequality  \eqref{08063} follows and this completes the proof of the Theorem.
 \end{proof}\\
 
\noindent\begin{proof}[Proof of Theorem \ref{MT5}] Consider  the sequence $\{u_m\}_{m\geq 0}$ of nonnegative functions defined by $u_0=f$ and 
$$u_{m+1}=  {\bf W}_{\alpha ,p}^{R}[P_{l,a,\beta}(u_m )] + f~~\textrm{ in }~ \mathbb{R}^N\quad\forall m\geq 0.$$
By Theorem \ref{TH3}, there exists $M>0$ depending on $N,\alpha,p,l,a,\beta,\varepsilon$ and $R$ such that if \eqref{14061} holds, then  $\{u_m\}_{m\geq 0}$ is well defined and  \eqref{12321} and \eqref{1232} are satisfied.  It is easy to see that $\{u_m\}$ is nondecreasing. Hence, thanks to the dominated convergence theorem, we obtain that  $ u(x) = \mathop {\lim }\limits_{m \to \infty } {u_m}(x)$
is a solution of equation \eqref{MT5a} which satisfies \eqref{MT5b}.\\   
Conversely, we obtain \eqref{MT5c} directly from Theorem \ref{TH1}, Part 1, (i).
 \end{proof}\medskip\\
 
 \noindent\begin{proof}[Proof of Theorem \ref{MT6}] The proof is similar to the previous one by using Theorem \ref{TH3-B} and  Theorem \ref{TH1}, Part 1, (ii).
  \end{proof}

\section{Quasilinear Dirichlet problems}
Let $\Omega$ be a bounded domain in $\mathbb{R}^N$.   If $\mu\in\mathfrak{M}_b(\Omega)$, we denote by $\mu^+$ and $\mu^-$ respectively its positive and negative parts in the Jordan decomposition. We denote by $\mathfrak{M}_0(\Omega)$ the space of measures in $\Omega$ which are absolutely continuous with respect to the $c^{\Omega}_{1,p}$-capacity defined on a compact set $K\subset\Omega$ by
 \begin{equation*}
c^{\Omega}_{1,p}(K)=\inf\left\{\int_{\Omega}{}|{\nabla \varphi}|^pdx:\varphi\geq \chi_K,\varphi\in C^\infty_c(\Omega)\right\}.
 \end{equation*}
 We also denote $\mathfrak{M}_s(\Omega)$ the space of measures in $\Omega$ with support on a set of zero $c^{\Omega}_{1,p}$-capacity. Classically, any $\mu\in\mathfrak{M}_b(\Omega)$ can be written in a unique way under the form $\mu=\mu_0+\mu_s$ where $\mu_0\in \mathfrak{M}_0(\Omega)\cap \mathfrak{M}_b(\Omega)$ and $\mu_s\in \mathfrak{M}_s(\Omega)$.
It is well known  that any  $\mu_0\in \mathfrak{M}_0(\Omega)\cap\mathfrak{M}_b(\Omega)$ can be written under the form $\mu_0=f-div ~g$ where $f\in L^1(\Omega)$ and $g\in L^{p'}(\Omega,\mathbb{R}^N)$.
 
 For $k>0$ and $s\in\mathbb{R}$ we set $T_k(s)=\max\{\min\{s,k\},-k\}$. If $u$ is a measurable function defined  in $\Omega$, finite a.e. and such that $T_k(u)\in W^{1,p}_{loc}(\Omega)$ for any $k>0$, there exists a measurable function $v:\Omega\to \mathbb{R}^N$ such that $\nabla T_k(u)=\chi_{|u|\leq k}v$ 
 a.e. in $\Omega$ and for all $k>0$. We define the gradient $\nabla u$ of $u$ by $v=\nabla u$. We recall the definition of a renormalized solution given in \cite{DMOP}.
 
 \begin{definition} Let $\mu=\mu_0+\mu_s\in\mathfrak{M}_b(\Omega)$. A measurable  function $u$ defined in $\Omega$ and finite a.e. is called a renormalized solution of 
 
\begin{equation}
\label{pro1}\begin{array}{ll}
  - {\Delta _p}u = \mu \qquad&\;in\;\Omega,  \\ 
 \phantom{ - {\Delta _p}}u = 0~~~&\;on\;\partial \Omega,  \\ 
 \end{array}
\end{equation}
  if $T_k(u)\in W^{1,p}_0(\Omega)$ for any $k>0$, $|{\nabla u}|^{p-1}\in L^r(\Omega)$ for any $0<r<\frac{N}{N-1}$, and $u$ has the property that for any $k>0$ there exist $\lambda_k^+$ and $\lambda_k^-$ belonging to $\mathfrak{M}_{b}^+\cap\mathfrak{M}_0(\Omega)$, respectively concentrated on the sets $u=k$ and $u=-k$, with the property that 
  $\mu_k^+\rightharpoonup\mu_s^+$, $\mu_k^-\rightharpoonup\lambda_s^-$ in the narrow topology of measures and such that
 \[
 \int_{\{|u|<k\}}\left\vert \nabla u\right\vert ^{p-2}\nabla u.\nabla\varphi
 dx=\int_{\{|u|<k\}}{\varphi d}{\mu_{0}}+\int_{\Omega}\varphi d\lambda_{k}%
 ^{+}-\int_{\Omega}\varphi d\lambda_{k}^{-},%
 \]
  for every $\varphi\in W^{1,p}_0(\Omega)\cap L^{\infty}(\Omega)$.

 \end{definition}

 \begin{remark}
 \label{R1} We recall that if $u$ is a renormalized solution to problem \eqref{pro1}, then $\frac{|\nabla u|^p}{(|u|+1)^r}\in L^1(\Omega)$ for all $r>1$. From this it follows by H\"older's inequality that $u\in W^{1,p_1}_{0}(\Omega)$ for all $1\le p_1<p$ provided $e^{a|u|}\in L^1(\Omega)$ for some $a>0$. Furthermore, $u\geq 0$ $a.e.$ in $\Omega$ if $\mu\in\mathfrak{M}_{b}^+(\Omega)$.
 \end{remark}

The following general stability result has been proved in \cite[Th 4.1]{DMOP}.
%

 \begin{theorem}
 \label{P3} Let $\mu=\mu_{0}+\mu_{s}^{+}-\mu_{s}^{-},$ with $\mu_{0}%
 =F-\operatorname{div}g\in\mathfrak{M}_{0}(\Omega)$ and  $\mu_{s}^{+}$, $\mu ^{-}_{s}$
belonging to $\mathfrak{M}_{s}^{+}(\Omega).$ Let
$ \mu_{n}=F_{n}-\operatorname{div}g_{n}+\rho_{n}-\eta_{n}$ with
 $F_{n}\in L^{1}(\Omega)$, $g_{n} \in(L^{p^{\prime}}(\Omega))^{N}$ and $\rho_{n}$, $\eta
 _{n}$ belonging to $\mathfrak{M}_{b}^{+}(\Omega)$.
 Assume that $\{F_{n}\}$ converges to $F$ weakly in $L^{1}(\Omega)$, $\{g_{n}\}$
 converges to $g$ strongly in $(L^{p^{\prime}}(\Omega))^{N}$ and
 $(\operatorname{div}g_{n})$ is bounded in $\mathfrak{M}_{b}(\Omega)$; assume also that
 $\{\rho_{n}\}$ converges to $\mu_{s}^{+}$ and $\{\eta_{n}\}$ to $\mu
 _{s}^{-}$ in the narrow topology. If $\{u_{n}\}$ is a sequence of renormalized solutions of \eqref{pro1} with data $\mu_n$, 
 then, up to a subsequence, it converges a.e. in $\Omega$ to a
 renormalized solution $u$ of problem (\ref{pro1}). Furthermore, $T_{k}(u_{n})$ converges to $T_{k}(u)$  in $W_{0}^{1,p}%
 (\Omega)$ for any $k>0$.
 \end{theorem}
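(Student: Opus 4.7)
My plan would be to recover the stability result by the standard strategy that Dal Maso--Murat--Orsina--Prignet introduced, combining Boccardo--Gallou\"et a priori estimates with a careful passage to the limit that separates the diffuse and singular parts of the data. First, I would exploit the decomposition $\mu_n = F_n - \operatorname{div} g_n + \rho_n - \eta_n$ and test the equation for $u_n$ against $T_k(u_n)$ to obtain, for every $k>0$, a bound of the form
\begin{equation*}
\int_\Omega |\nabla T_k(u_n)|^p \, dx \le C k \, \bigl(\|F_n\|_{L^1} + \|g_n\|_{L^{p'}}^{p'} + \rho_n(\Omega) + \eta_n(\Omega)\bigr) + C.
\end{equation*}
From such estimates the Boccardo--Gallou\"et argument yields uniform Marcinkiewicz bounds on $u_n$ and on $|\nabla u_n|^{p-1}$ (hence boundedness in $L^r$ for $r<N/(N-1)$). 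Standard compactness then provides a subsequence with $u_n \to u$ a.e., $T_k(u_n) \rightharpoonup T_k(u)$ weakly in $W^{1,p}_0(\Omega)$, $|\nabla u_n|^{p-1}\to |\nabla u|^{p-1}$ strongly in $L^r$ for $r<N/(N-1)$, and a limit $u$ that is finite a.e.

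The crucial and hardest step is to upgrade the weak convergence of $T_k(u_n)$ to strong convergence in $W^{1,p}_0(\Omega)$. Here I would use the DMOP choice of test function: a truncation of the form $h_m(u_n)\bigl(T_k(u_n)-T_k(u)\bigr)$ where $h_m$ is a smooth cutoff equal to $1$ on $[-m,m]$ and supported in $[-2m,2m]$. The point of this test function is twofold: first, the factor $h_m(u_n)$ localizes to a region where $u_n$ is bounded, which is essential because the singular measures $\rho_n,\eta_n$ are asymptotically concentrated on $\{|u_n|=\infty\}$; second, passing $n\to\infty$ and then $m\to\infty$ neutralizes the contribution of $\mu_s^{\pm}$ via the narrow convergence of $\rho_n,\eta_n$, while the diffuse part $\mu_0$ is dealt with by the $L^1$ weak convergence of $F_n$ and the $L^{p'}$ strong convergence of $g_n$. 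Combined with the monotonicity of the $p$-Laplacian, this yields $\nabla T_k(u_n) \to \nabla T_k(u)$ strongly in $L^p$ for every $k$, and also $\nabla u_n \to \nabla u$ a.e.

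With this strong convergence, the passage to the limit in the renormalized formulation is then a matter of bookkeeping. For each fixed $k>0$ one defines $\lambda_{k,n}^{\pm}$ as the corresponding concentration measures on $\{u_n = \pm k\}$; from the energy identity obtained by testing with $T_1(u_n - T_{k-1}(u_n))$-type functions, one controls their total masses uniformly and, by extracting a further subsequence (diagonal in $k$), shows that $\lambda_{k,n}^{\pm}$ converges narrowly to measures $\lambda_k^{\pm}\in\mathfrak{M}_b^+\cap\mathfrak{M}_0(\Omega)$ that still enjoy the asymptotic property $\lambda_k^{\pm}\rightharpoonup \mu_s^{\pm}$ as $k\to\infty$, by comparing with the narrow limits of $\rho_n,\eta_n$. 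Plugging everything into the renormalized equation for $u_n$ and passing to the limit for $\varphi \in W^{1,p}_0(\Omega)\cap L^\infty(\Omega)$ then identifies $u$ as a renormalized solution of \eqref{pro1}.

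The main obstacle, as is well known for this kind of stability theorem, is Step 2: obtaining the strong $W^{1,p}$-convergence of truncations in the presence of a singular part. The technical heart is the construction of admissible test functions that simultaneously see only the bounded part of $u_n$ (to avoid picking up mass from $\rho_n,\eta_n$ in the wrong region) and still produce enough coercivity to invoke the monotonicity of $-\Delta_p$; the rest of the argument is a fairly mechanical, if careful, diagonal extraction.
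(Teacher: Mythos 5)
This theorem is not proved in the paper at all: it is quoted verbatim as the stability result of Dal Maso--Murat--Orsina--Prignet, with the proof delegated to \cite[Th 4.1]{DMOP}. Your outline (truncation estimates, Boccardo--Gallou\"et compactness, the $h_m(u_n)\bigl(T_k(u_n)-T_k(u)\bigr)$ test functions to get strong convergence of truncations, then passage to the limit in the renormalized formulation) is precisely the strategy of that cited proof, so it is consistent with the source the paper relies on, with the caveat that the hardest step is only sketched rather than carried out.
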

 
We also recall the following estimate \cite[Th 2.1]{PhVe}.
 \begin{theorem}\label{TH4}
 Let $\Omega$ be a bounded domain of $\mathbb{R}^N$. Then there exists a constant $K_1>0$, depending on $p$ and $N$ such that if $\mu\in \mathfrak{M}^+_b(\Omega)$ and $u$ is a nonnegative renormalized solution of problem (\ref{pro1}) with  data $\mu$, there holds
    \begin{equation}
        \frac{1}{K_1}{\bf W}^{\frac{d(x,\partial\Omega)}{3}}_{1,p}[\mu](x) \le u(x)\leq K_1 {\bf W}^{2\,diam\,(\Omega)}_{1,p}[\mu](x)~~ \forall x\in \Omega,
        \end{equation}
 where the positive constant $K_1$ only depends on $N,p$.  
 \end{theorem}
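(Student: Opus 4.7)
The plan rests on combining the Kilpeläinen--Malý pointwise estimates for nonnegative $p$-superharmonic functions with a boundary-extension trick. The underlying local estimate, which I would first recall, states that if $v \geq 0$ is $p$-superharmonic in a ball $B_{3r}(x_0)$ with associated Riesz measure $\sigma$, then there exist constants $c_1, c_2$ depending only on $N$ and $p$ such that
\begin{equation*}
c_1 \, \mathbf{W}_{1,p}^{r}[\sigma](x_0) \;\leq\; v(x_0) \;\leq\; c_2 \inf_{B_r(x_0)} v + c_2 \, \mathbf{W}_{1,p}^{2r}[\sigma](x_0).
\end{equation*}
Before applying it, I would use the equivalence between nonnegative renormalized solutions of \eqref{pro1} and their $p$-superharmonic representatives (with Riesz measure equal to $\mu$), a fact coming out of \cite{DMOP} combined with Kilpeläinen--Malý theory; this lets one work with the lower-semicontinuous representative and invoke the local pointwise estimate pointwise in $\Omega$.

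For the lower bound the argument is immediate: given $x \in \Omega$, set $r = d(x,\partial\Omega)/3$ so that $B_{3r}(x) \subset \Omega$, and apply the lower half of the estimate to $u$ on that ball. This yields $u(x) \geq c_1 \, \mathbf{W}_{1,p}^{d(x,\partial\Omega)/3}[\mu](x)$, which is the desired inequality with $K_1 := 1/c_1$.

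For the upper bound I would extend $u$ by zero to all of $\mathbb{R}^N$, using that $T_k(u) \in W^{1,p}_0(\Omega)$ for every $k > 0$ to check that the extension $\tilde{u}$ is still $p$-superharmonic on any ball, with Riesz measure $\mu$ (regarded as a measure on $\mathbb{R}^N$ by extension by zero). Then, for $x \in \Omega$ take $r = \mathrm{diam}(\Omega)$, so that $B_{r}(x)$ necessarily meets $\Omega^c$ and therefore $\inf_{B_r(x)} \tilde{u} = 0$. Applying the upper half of the Kilpeläinen--Malý estimate on $B_{3r}(x)$ to $\tilde{u}$ gives $u(x) = \tilde{u}(x) \leq c_2 \, \mathbf{W}_{1,p}^{2\,\mathrm{diam}(\Omega)}[\mu](x)$, which is the desired bound (adjusting $K_1$ to absorb $c_2$).

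The main obstacle is justifying rigorously that the zero-extension $\tilde{u}$ is $p$-superharmonic across $\partial\Omega$ with the correct Riesz measure, and that the Kilpeläinen--Malý local estimate applies to this possibly boundary-singular object. I would handle this through approximation: take smooth measures $\mu_n$ converging to $\mu$ in the narrow topology (with a decomposition as in Theorem \ref{P3}), solve the associated Dirichlet problems getting regular $u_n$ vanishing on $\partial\Omega$ in the trace sense, and check that their zero-extensions are clearly $p$-superharmonic everywhere. The Kilpeläinen--Malý estimate then applies to each $u_n$, and Theorem \ref{P3} provides a.e.\ convergence of $u_n$ to $u$ up to a subsequence. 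Passing to the limit using lower semicontinuity of $\tilde{u}$ and Fatou's lemma applied to the Wolff potential completes the upper estimate.
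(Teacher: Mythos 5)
First, a point of comparison: the paper does not prove Theorem \ref{TH4} at all — it is recalled from Phuc--Verbitsky \cite[Theorem 2.1]{PhVe} — so your proposal has to be measured against that proof. Your lower bound is the standard argument and is fine: for $r=d(x,\partial\Omega)/3$ one has $B_{3r}(x)\subset\Omega$, the nonnegative renormalized solution of \eqref{pro1} has a $p$-superharmonic representative in $\Omega$ with Riesz measure $\mu$, and the Kilpel\"ainen--Mal\'y lower estimate gives $u(x)\ge c_1{\bf W}^{d(x,\partial\Omega)/3}_{1,p}[\mu](x)$.

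The upper bound, however, contains a genuine gap: the zero extension $\tilde u$ of $u$ is \emph{not} $p$-superharmonic across $\partial\Omega$, and its Riesz measure is not $\mu$. Already for smooth data and a smooth $u\ge0$ vanishing on $\partial\Omega$, integration by parts gives, for $0\le\varphi\in C^\infty_c(\mathbb{R}^N)$, $\int_{\mathbb{R}^N}|\nabla\tilde u|^{p-2}\nabla\tilde u\cdot\nabla\varphi\,dx=\int_\Omega\varphi\,d\mu+\int_{\partial\Omega}|\nabla u|^{p-2}\tfrac{\partial u}{\partial n}\varphi\,dS$ with $\tfrac{\partial u}{\partial n}\le0$ (outward normal), so $-\Delta_p\tilde u=\mu-\nu$ with a nonnegative, in general nontrivial, surface measure $\nu$ on $\partial\Omega$: near the boundary $\tilde u$ behaves like a subsolution, not a supersolution. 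Concretely, for $p=2$, $N=1$, $\Omega=(0,1)$, $\mu=\delta_{1/2}$, the solution is the tent function and $\tilde u$ coincides with $\max(0,x/2)$ near $x=0$, which is convex, so the comparison definition of superharmonicity fails there; the radially extended $p$-Green function of a ball gives the same failure in any dimension. Hence the Kilpel\"ainen--Mal\'y estimate simply does not apply to $\tilde u$ on a ball meeting $\Omega^c$, and your proposed repair — approximate $\mu$ by smooth $\mu_n$, solve, and claim the zero extensions of the regular $u_n$ are ``clearly $p$-superharmonic everywhere'' — fails for exactly the same reason, since for each $u_n$ the negative boundary contribution above is explicit; the approximation step does not remove it. The boundary is precisely the hard part of the global upper estimate: one needs an object that is genuinely $p$-superharmonic across $\partial\Omega$ and dominates $u$ (in \cite{PhVe} this is achieved through comparison/approximation arguments, e.g.\ comparing along a sequence with nice data to solutions of auxiliary Dirichlet problems on larger sets where the classical $W^{1,p}$ comparison principle applies, combined with estimates up to the boundary), not the zero extension. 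As written, your argument only yields the interior-type bound $u(x)\le C\inf_{B_r(x)}u+C{\bf W}^{2r}_{1,p}[\mu](x)$ for balls $B_{3r}(x)\subset\Omega$, which does not give the stated inequality with the truncation radius $2\,diam\,(\Omega)$.
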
 
\begin{proof}[Proof of Theorem \ref{MT1}] Let $\{u_m\}_{m\in \BBN}$ be a 
sequence of nonnegative renormalized solutions of the following problems  
\begin{equation*}
\begin{array}{ll}
  - {\Delta _p}u_0 = \mu\qquad&\text{in }\,\Omega,  \\ \phantom{  - {\Delta _p}}
 u_0 = 0\qquad&\text{on }\,\partial\Omega, 
 \end{array}
\end{equation*}
 and, for $m\in\BBN$,
\begin{equation*}
\begin{array}{ll}
  - {\Delta _p}u_{m+1}  =P_{l,a,\beta}(u_{m})+ \mu\qquad&\text{in }\,\Omega,  \\ \phantom{  - {\Delta _p}}
 u_{m+1}=0\qquad&\text{on }\,\partial\Omega. 
 \end{array} 
\end{equation*}
Clearly, we can assume that $\{u_m\}$ is nondecreasing, see \cite{PhVe2}.  
By Theorem \ref{TH4} we have 
\begin{equation*}\begin{array}{ll}
    \chi_{\Omega}u_0\leq K_1 {\bf W}^{R}_{1,p}[\mu],\\[2mm]
    \!\!\!\!\!\!\!\chi_{\Omega}u_{m+1}\leq K_1 {\bf W}^{R}_{1,p}[P_{l,a,\beta}(u_{m})+\mu]~~\forall m \in 
    \mathbb{N},
  \end{array}  \end{equation*}
where $R=2\,diam\,(\Omega)$. Thus, by Theorem \ref{TH3} with $f\equiv 0$,
 there exists $M>0$ depending on $N,p,l,a,\beta,K_1$ and $R$ such that  $P_{l,a,\beta}(4c_pK_1{\bf W}_{1 ,p}^R[\omega])\in L^1(\Omega)$ and
 \begin{equation}\label{1335}
    {u_m(x)} \le 2c_pK_1 {\bf W}_{1 ,p}^R[\omega](x) \quad\forall x\in \Omega, m\in\BBN,
    \end{equation}
    provided that\[||M_{ p,R}^{\frac{{(p - 1)(\beta  - 1)}}{\beta }}[\mu ]|{|_{{L^\infty }({\mathbb{R}^N})}} \le M,\]
 where $\omega=M||{\bf M}_{ p,R}^{\frac{(p-1)(\beta-1)}{\beta}} [1] ||^{-1}_{{L^\infty }(\mathbb{R}^N)}+\mu$ and $c_p={1 \vee {4^{\frac{{2 - p}}{{p - 1}}}}} $.   
This implies that $\{u_m\}$ is well defined and nondecreasing. Thus  $\{u_m\}$ converges a.e in $\Omega$ to some function $u$ which satisfies \eqref{MT1b}  in $\Omega$. Furthermore, we deduce  from \eqref{1335} and the monotone convergence theorem that $P_{l,a,\beta}(u_{m})\to P_{l,a,\beta}(u)$ in $L^1(\Omega)$. Finally, by Theorem \ref{P3} we obtain that $u$ is a renormalized solution of \eqref{MT1a}.\\
     Conversely, assume that \eqref{MT1a} admits a nonnegative renormalized solution $u$. By Theorem \ref{TH4} there holds 
   $$u(x)\geq \frac{1}{K_1}{\bf W}^{\frac{d(x,\partial\Omega)}{3}}_{1,p}[P_{l,a,\beta}(u)+\mu](x)~~\textrm{ for all } x\in \Omega.$$
  Hence,  we achieve \eqref{MT1c} from Theorem \ref{TH1}, Part 2.
\end{proof}\\

\noindent{\bf Applications.} We consider the case $p=2$, $\beta=1$. Then $l=2$ and
$$P_{l,a,\beta}(r)=e^{ar}-1-ar.
$$
 If $\Omega$ is a bounded domain in $\BBR^N$, there exists $M>0$ such that if $\mu$ is a positive Radon measure in $\Omega$ which satisfies
$$
\mu (B_t(x))\leq M
t^{N-2}\qquad\forall t>0 \text{ and }\text{ almost all  }\,x\in\Omega,
$$
there exists a positive solution $u$ to the following problem
$$\BA {ll}
-\Gd u=e^{au}-1-au+\mu \qquad&\text{ in  }\;\Omega,\\
\phantom{-\Gd}
u=0\qquad&\text{ on  }\;\partial\Omega.
\EA$$
 Furthermore
 $$
u(x)\leq  K(N)\int_0^{2\,diam\,\Gw}\frac{\omega (B_t(x))}{t^{N-1}}dt=K(N)\int_0^{2\,diam\,(\Gw)}\frac{\mu (B_t(x))}{t^{N-1}}dt+b\qquad\forall x\in\Omega.
$$
where $b=2K(N)M||{\bf M}_{ 2,2\,diam\,(\Omega)} [1] ||_{{L^\infty }(\mathbb{R}^N)}^{-1}|B_1|(diam\,\Gw)^2$.
In the case $N=2$ this result has already been proved by Richard and V\'eron \cite[Prop 2.4]{RV}.
\section{p-superharmonic functions and quasilinear equations in $\mathbb{R}^N$ }
We recall some definitions and properties   of $p$-superharmonic functions. \\

\begin{definition} A function $u$ is said to be $p$-harmonic in $\mathbb{R}^N$ if $u\in W^{1,p}_{loc}(\mathbb{R}^N)\cap C(\mathbb{R}^N)$ and $-\Delta_p u=0$ in $\mathcal{D}'(\mathbb{R}^N)$.
A function $u$ is called a $p$-supersolution in $\mathbb{R}^N$ if $u\in W^{1,p}_{loc}(\mathbb{R}^N)$ and $-\Delta_p u\geq 0$ in $\mathcal{D}'(\mathbb{R}^N)$. 
\end{definition}
\begin{definition}
A lower semicontinuous (l.s.c) function $u: \mathbb{R}^N \to (-\infty,\infty]$ is called $p$-super-\\harmonic if $u$ is not identically infinite  and if, for all open $D\subset \subset \mathbb{R}^N$ and all $v\in C(\overline{D})$, $p$-harmonic in $D$, $v\le u$ on $\partial D$ implies $v\le u$ in $D$.
\end{definition}
Let $u$ be a $p$-superharmonic in $\mathbb{R}^N$. It is well known that $u\wedge k\in W^{1,p}_{loc}(\mathbb{R}^N)$ is a p-supersolution for all $k>0$ and $u<\infty$ a.e in $\mathbb{R}^N$, thus, $u$ has a gradient (see the previous section). We also have $|\nabla u|^{p-1}\in L^q_{loc}(\mathbb{R}^N)$, $\frac{|\nabla u|^p}{(|u|+1)^r}\in L^1_{loc}(\mathbb{R}^N)$ and $u\in L^s_{loc}(\mathbb{R}^N)$ for $1\le q<\frac{N}{N-1}$ and $r>1$, $1\le s < \frac{N(p-1)}{N-p}$ (see \cite[Theorem 7.46]{HeKiMa}). In particular, if $e^{a|u|}\in L^1_{loc}(\mathbb{R}^N)$ for some $a>0$, then $u\in W^{1,p_1}_{loc}(\mathbb{R}^N)$ for all $1\le p_1<p$  by H\"older's inequality. Thus for any  $0\le \varphi \in C^1_c(\Omega)$, by the dominated convergence theorem, 
\begin{equation*}
\left\langle -\Delta_p u,\varphi \right\rangle =\int_{\mathbb{R}^N}|\nabla u|^{p-2}\nabla u\nabla \varphi dx
=\mathop {\lim }\limits_{k \to \infty }\int_{\mathbb{R}^N}|\nabla (u\wedge k)|^{p-2}\nabla (u\wedge k) \nabla \varphi \geq 0.\end{equation*}
Hence, by the Riesz Representation Theorem we conclude that there is a nonnegative Radon measure denoted by $\mu[u]$, called Riesz measure,  such that $-\Delta_p u=\mu[u]$ in $\mathcal{D}'(\mathbb{R}^N)$.\\

The following weak convergence result for  Riesz measures proved in \cite{TW4} will be used to prove the existence of $p$-superharmonic solutions to quasilinear equations. 
\begin{theorem}\label{07062}
Suppose that $\{u_n\}$  is a sequence of nonnegative $p$-superharmonic functions in $\mathbb{R}^N$  that converges a.e to a $p$-superharmonic function $u$. Then the sequence of measures $\{\mu[u_n]\}$ converges to $\mu[u]$ in the weak sense of measures.
\end{theorem}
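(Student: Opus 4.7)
The plan is to proceed in three stages: (a) obtain a local uniform bound on the sequence of Riesz measures $\{\mu[u_n]\}$, (b) extract a weak$^*$ convergent subsequence, and (c) identify the weak limit as $\mu[u]$. Uniqueness of the limit will then upgrade subsequential convergence to convergence of the full sequence.

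For step (a), I would first exploit the $p$-superharmonicity of $u$ together with the a.e.\ convergence: since $u$ is locally in $L^s$ for any $s<\frac{N(p-1)}{N-p}$ and finite a.e., Fatou's lemma applied to $\{u_n\}$ gives uniform local $L^s$ bounds. The uniform bound on $\mu[u_n]$ on compacts then follows from testing against a standard cutoff $\eta \in C_c^\infty(\mathbb{R}^N)$, $0\le\eta\le 1$, using the truncations $u_n\wedge k$ (which are genuine $p$-supersolutions in $W^{1,p}_{loc}$) and a Caccioppoli-type estimate on $\int |\nabla(u_n\wedge k)|^p \eta^p /(u_n\wedge k+1)^r$, combined with passage to the limit $k\to\infty$ via the pointwise definition of $\mu[u_n]$.

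For step (b), local uniform boundedness and the Banach--Alaoglu theorem for Radon measures yield a subsequence, still denoted $\{\mu[u_n]\}$, and a nonnegative Radon measure $\nu$ with $\mu[u_n]\rightharpoonup \nu$ in the weak sense.

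Step (c) is the main obstacle and the heart of the argument: one has to pass to the limit in the distributional identity
\[
\int_{\mathbb{R}^N}|\nabla u_n|^{p-2}\nabla u_n\cdot\nabla\varphi\,dx = \int_{\mathbb{R}^N}\varphi\,d\mu[u_n],\qquad \varphi\in C_c^\infty(\mathbb{R}^N).
\]
I would do this through truncations: for each fixed $k>0$, $T_k(u_n)\to T_k(u)$ a.e.\ and in $L^p_{loc}$ by dominated convergence, and Caccioppoli gives a uniform $W^{1,p}_{loc}$ bound on $T_k(u_n)$. A Minty/monotonicity argument on the $p$-supersolutions $T_k(u_n)$, in the spirit of Kilpel\"ainen--Mal\'y and Trudinger--Wang, yields strong $L^p_{loc}$ convergence of $\nabla T_k(u_n)$ to $\nabla T_k(u)$, hence pointwise a.e.\ convergence of gradients on each set $\{u<k\}$. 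Letting $k\to\infty$, and using that $\{u=\infty\}$ has Lebesgue measure zero, one gets $\nabla u_n \to \nabla u$ a.e. Uniform integrability of $|\nabla u_n|^{p-2}\nabla u_n$ on supports of test functions (once again from a Caccioppoli estimate applied to $u_n\wedge k$ for $k$ large, plus the local bound on $\mu[u_n]$) then lets one pass to the limit by Vitali's theorem, identifying $\nu=\mu[u]$. Since the limit is uniquely determined independently of the chosen subsequence, the full sequence $\mu[u_n]$ converges weakly to $\mu[u]$.
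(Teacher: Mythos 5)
The paper offers no proof of this statement to compare against: Theorem \ref{07062} is quoted from Trudinger and Wang \cite{TW4}. Your outline does follow the standard potential-theoretic route by which this weak continuity result is proved in the literature (uniform local mass bounds, weak$^*$ compactness, a.e.\ convergence of gradients via truncation, Vitali), so the architecture is sound. There is, however, a concrete flaw at the foundational step (a): Fatou's lemma gives $\int_K u^s\,dx\le\liminf_n\int_K u_n^s\,dx$, an estimate of the limit by the sequence, and cannot yield the uniform bound $\sup_n\int_K u_n^s\,dx<\infty$ that you need; a.e.\ convergence alone does not produce it. The correct tool is the weak Harnack inequality for nonnegative $p$-superharmonic functions: if $\int_{B_r}u_{n_j}^s\,dx\to\infty$ along a subsequence, then $\inf_{B_{r/2}}u_{n_j}\to\infty$, so $u_{n_j}\to\infty$ everywhere on $B_{r/2}$, contradicting the a.e.\ convergence to the $p$-superharmonic (hence a.e.\ finite) function $u$. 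Since everything downstream — the Caccioppoli estimate, the uniform bound on $\mu[u_n](K)$, the uniform $L^q_{loc}$ bound on $\left|\nabla u_n\right|^{p-1}$ — rests on this uniform $L^s_{loc}$ bound, the Fatou justification is a genuine gap, though a repairable one.

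A second, smaller weakness is in step (c): asserting that a ``Minty/monotonicity argument'' yields strong $L^p_{loc}$ convergence of $\nabla T_k(u_n)$ is too quick. The truncations $T_k(u_n)$ are supersolutions whose Riesz measures are only locally uniformly bounded, and Minty's trick by itself only identifies weak limits of the monotone vector field; the a.e.\ (or strong truncated) convergence of gradients is obtained by a Boccardo--Murat type argument, testing with functions of the form $\eta\,T_\epsilon\!\left(T_k(u_n)-T_k(u)\right)$ and exploiting the strict monotonicity of $\xi\mapsto|\xi|^{p-2}\xi$ together with the uniform local mass bounds from step (a), as in Kilpel\"ainen--Mal\'y or \cite{TW4}. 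With these two repairs (weak Harnack in place of Fatou, and the gradient-convergence lemma properly invoked), your scheme does prove the theorem, and the final passage to the limit against $C_c^\infty$ test functions plus the uniform local mass bounds correctly upgrades to weak convergence of $\mu[u_n]$ to $\mu[u]$ without even needing the subsequence extraction.
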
 
The next theorem is proved in \cite{PhVe}
\begin{theorem}\label{07061}
Let $\mu$ be  a measure in $\mathfrak{M}^+(\mathbb{R}^N)$. Suppose that ${\bf W}_{1,p}[\mu]<\infty$ a.e. Then there exists a nonnegative $p$-superharmonic function $u$ in $\mathbb{R}^N$ such that  $-\Delta_p u=\mu $ in $\mathcal{D}'(\mathbb{R}^N)$, $\inf_{\mathbb{R}^N}u=0$ and
\begin{equation}\label{07063}
\frac{1}{K_1}{\bf W}_{1,p}[\mu](x)\le u(x)\le K_1{\bf W}_{1,p}[\mu](x),
\end{equation}
for all $x$ in $\mathbb{R}^N$, where the constant $K_1$ is as in Theorem \ref{TH4}.
Furthermore any $p$-superharmonic function $u$ in $\mathbb{R}^N$, such that $\inf_{\mathbb{R}^N}u=0$ satisfies \eqref{07063}  with $\mu=-\Delta_pu$. 
\end{theorem}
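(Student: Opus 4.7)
The statement has two parts: an existence/construction of a $p$-superharmonic solution of $-\Delta_p u=\mu$ on all of $\mathbb R^N$ with $\inf u=0$, and a two-sided pointwise estimate in terms of ${\bf W}_{1,p}[\mu]$. I would treat the estimate part first (it applies to any admissible $u$) and then use it to drive the existence argument.

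\textbf{Pointwise estimate (``Furthermore'' part).} Let $u\ge 0$ be $p$-superharmonic on $\mathbb R^N$ with Riesz measure $\mu=-\Delta_p u$ and $\inf_{\mathbb R^N}u=0$. The Kilpeläinen--Malý local bound (the $\mathbb R^N$ analogue used in Theorem \ref{TH4}) gives, for every ball $B_r(x)$,
\[
c_1\,{\bf W}_{1,p}^{r}[\mu](x)\;\le\;u(x)\;\le\;c_2\inf_{B_r(x)}u\;+\;c_3\,{\bf W}_{1,p}^{2r}[\mu](x).
\]
Letting $r\to\infty$ in the upper estimate and using $\inf u=0$ produces $u(x)\le K_1 {\bf W}_{1,p}[\mu](x)$, while letting $r\to\infty$ in the lower estimate gives $u(x)\ge K_1^{-1}{\bf W}_{1,p}[\mu](x)$, which is \eqref{07063}. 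This part is essentially just quoting the local Wolff-potential estimates and globalizing them with the $\inf u=0$ normalization.

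\textbf{Existence.} I would truncate $\mu$ to get measures of compact support and solve Dirichlet problems on balls. Concretely: put $\mu_n=\chi_{B_n}\mu$ and, for each $n$, let $v_n$ be the nonnegative renormalized solution of
\[
-\Delta_p v_n=\mu_n\text{ in }B_{2n},\qquad v_n=0\text{ on }\partial B_{2n},
\]
extended by $0$ outside $B_{2n}$. By Theorem \ref{TH4},
\[
0\le v_n(x)\le K_1\,{\bf W}_{1,p}^{4n}[\mu_n](x)\le K_1\,{\bf W}_{1,p}[\mu](x),
\]
so $v_n$ is finite a.e. A comparison argument on $B_{2n}$ (using $\mu_n\le\mu_{n+1}$ and that the extension by $0$ is $p$-superharmonic) allows one to assume $\{v_n\}$ is nondecreasing, so $u:=\lim_n v_n$ exists a.e., is nonnegative, and inherits the bound $u\le K_1{\bf W}_{1,p}[\mu]$; in particular $u$ is $p$-superharmonic on $\mathbb R^N$ (l.s.c.\ envelope of an increasing sequence of $p$-superharmonic functions). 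To identify the Riesz measure of $u$, I apply Theorem \ref{07062}: the measures $\mu[v_n]=\mu_n$ converge weakly to $\mu[u]$, and since $\mu_n\to\mu$ weakly as well, $-\Delta_p u=\mu$ in $\mathcal D'(\mathbb R^N)$. The normalization $\inf_{\mathbb R^N}u=0$ is handled by noting that each $v_n$ vanishes on $\partial B_{2n}$, hence $\inf_{\mathbb R^N} v_n=0$ for every $n$, and combined with the uniform upper bound $u\le K_1{\bf W}_{1,p}[\mu]$ one extracts a sequence of points along which $u\to 0$ (or, more directly, replaces $u$ by $u-\inf u$ and uses that $\inf u$ is well-defined and finite because of the upper bound; if $\inf u>0$, subtract the constant, which does not change $-\Delta_p u$, and use the pointwise estimate already proved to conclude that the subtracted function still solves the equation with the correct infimum).

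\textbf{Main obstacle.} The existence part is straightforward once one knows the right a priori bound; the delicate point is the normalization $\inf_{\mathbb R^N}u=0$. The upper Wolff-potential bound does \emph{not} force $u(x)\to 0$ as $|x|\to\infty$ for arbitrary $\mu$ (${\bf W}_{1,p}[\mu]$ need not vanish at infinity), so one must argue more carefully --- either by passing to the limit of the balls via the boundary values $v_n=0$ on $\partial B_{2n}$ together with Harnack-type control, or by the normalization trick of subtracting $\inf u$ and invoking uniqueness/comparison in the class of $p$-superharmonic functions with prescribed infimum. Handling this, together with checking that the weak limit of Riesz measures coincides with $\mu$ (and that no extra mass escapes to infinity), is the technical heart of the argument.
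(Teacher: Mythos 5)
The paper does not actually prove Theorem \ref{07061}: it is quoted from \cite{PhVe} ("The next theorem is proved in \cite{PhVe}"). Your route --- Kilpel\"ainen--Mal\'y two-sided local estimates globalized with $\inf_{\mathbb{R}^N}u=0$ for the ``Furthermore'' part, plus an increasing sequence of solutions on expanding balls with truncated measures, weak continuity of the Riesz measure, and the a priori bound $u\le K_1{\bf W}_{1,p}[\mu]$ for existence --- is exactly the standard argument of the cited reference, so in outline your proposal is the right one. In particular, the ``subtract $\inf u$'' remark in your parenthetical is the correct resolution of what you call the main obstacle: the limit $u$ is finite a.e.\ (hence $\inf u<\infty$), $-\Delta_p(u-\inf u)=-\Delta_p u=\mu$, and the already-proved ``Furthermore'' part applied to $u-\inf u$ gives \eqref{07063}; no decay of ${\bf W}_{1,p}[\mu]$ at infinity is needed.

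Two steps, however, would fail as literally written. First, the extension of $v_n$ by $0$ outside $B_{2n}$ is \emph{not} $p$-superharmonic: at points of $\partial B_{2n}$ the comparison/mean-value property fails (already for $p=2$ the distributional Laplacian of the zero extension of a Green potential acquires a negative surface term on the sphere), so you may neither invoke Theorem \ref{07062} for the extended functions nor use the extension as a supersolution in a comparison argument. The repair is routine: work locally --- on any fixed ball $B_R$ the functions $v_n$ (for $n>R$) are $p$-superharmonic and increase to $u$, the local weak continuity of $\mu[\cdot]$ (\cite{TW4}, or the local version behind Theorem \ref{07062}) gives $\mu[v_n]\rightharpoonup\mu[u]$ in $B_R$, and $\mu[v_n]=\mu$ on $B_R$ for $n\ge R$, whence $\mu[u]=\mu$. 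Second, ``a comparison argument\dots allows one to assume $\{v_n\}$ is nondecreasing'' glosses over a genuine difficulty: there is no general comparison (or uniqueness) principle for renormalized solutions with arbitrary measure data when $p\neq 2$. Monotonicity is not deduced from comparison of the $v_n$'s after the fact, but arranged in their construction (approximation by problems with regular data for which comparison does hold, and passage to the limit), which is the content of the reference \cite{PhVe2} that the paper itself invokes for the analogous step in the proof of Theorem \ref{MT1}. With these two repairs your argument matches the proof of \cite{PhVe}.
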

\begin{proof}[Proof of Theorem \ref{MT2}]Let $\{u_m\}_{m\in \BBN}$ be a 
sequence of $p$-superharmonic  solutions of the following problems  
\begin{equation*}
\begin{array}{ll}
  \phantom{--,,}- {\Delta _p}u_0 = \mu\qquad\text{in }~\mathcal{D}'(\mathbb{R}^N),  \\ \phantom{  - {\Delta _p}}
 \inf_{\mathbb{R}^N}u_0=0, 
 \end{array}
\end{equation*}
 and, for $m\in\BBN$,
\begin{equation*}
\begin{array}{ll}
    \phantom{--,,}- {\Delta _p}u_{m+1}  =P_{l,a,\beta}(u_{m})+ \mu~~\text{in }~\mathcal{D}'(\mathbb{R}^N),  \\ \phantom{  - {\Delta _p}}
 \inf_{\mathbb{R}^N}u_{m+1}=0. 
 \end{array} 
\end{equation*}
Clearly, we can assume that $\{u_m\}$ is nondecreasing. 
By Theorem \ref{07061} we have
\begin{equation*}\begin{array}{ll}
    u_0\leq K_1 {\bf W}_{1,p}[\mu],\\[2mm]
    \!\!\!\!\!\!\!u_{m+1}\leq K_1 {\bf W}_{1,p}[P_{l,a,\beta}(u_{m})+\mu]~~\forall m \in 
    \mathbb{N}.
  \end{array}  \end{equation*} Thus, by Theorem \ref{TH3-B} with $f\equiv 0$,
 there exists $M>0$ depending on $N,p,l,a,\beta,K_1$ and $R$ such that  $P_{l,a,\beta}(4c_pK_1{\bf W}_{1 ,p}[\omega])\in L^1(\mathbb{R}^N)$ and
 \begin{equation}\label{1335a}
    {u_m} \le 2c_pK_1 {\bf W}_{1 ,p}[\omega] \quad\forall m\in\BBN,
    \end{equation}
    provided that\[||M_{ p}^{\frac{{(p - 1)(\beta  - 1)}}{\beta }}[\mu ]|{|_{{L^\infty }({\mathbb{R}^N})}} \le M,\]
 where $\omega=M||{\bf M}_{ p}^{\frac{(p-1)(\beta-1)}{\beta}} [\chi_{B_{R}}] ||^{-1}_{{L^\infty }(\mathbb{R}^N)}\chi_{B_{R}}+\mu$.
This implies that $\{u_m\}$ is well defined and nondecreasing. Thus, $\{u_m\}$ converges a.e in $\mathbb{R}^N$ to some $p$-superharmonic  function $u$ which satisfies \eqref{MT2b}  in $\mathbb{R}^N$. Furthermore, we deduce  from \eqref{1335a} and the monotone convergence theorem that $P_{l,a,\beta}(u_{m})\to P_{l,a,\beta}(u)$ in $L^1(\mathbb{R}^N)$. Finally, by Theorem \ref{07062} we conclude that $u$ is a p-superharmonic solution of \eqref{MT2a}.\\
     Conversely, assume that \eqref{MT2a} admits a nonnegative renormalized solution $u$. By Theorem \ref{07061} there holds 
   $$u(x)\geq \frac{1}{K_1}{\bf W}_{1,p}[P_{l,a,\beta}(u)+\mu](x)~~\textrm{ for all } x\in \mathbb{R}^N.$$
   Hence, we obtain \eqref{MT2c} from Theorem \ref{TH1}, Part 1, (ii).
\end{proof}

 \section{Hessian equations}
In this section $\Omega\subset\mathbb{R}^N$ is either a bounded domain with a $C^2$ boundary or the whole $\mathbb{R}^N$. For $k=1,...,N$ and $u\in C^2(\Omega)$ the k-hessian operator $F_k$ is defined by 
 $$F_k[u]=S_k(\lambda(D^2u)),$$
 where $\lambda(D^2u)=\lambda=(\lambda_1,\lambda_2,...,\lambda_N)$ denotes the eigenvalues of the Hessian matrix of second partial derivative $D^2u$ and $S_k$ is the k-th elementary symmetric polynomial that is \[{S_k}(\lambda ) = \sum\limits_{1 \le {i_1} < ... < {i_k} \le N} {{\lambda _{{i_1}}}...{\lambda _{{i_k}}}}. \]
 We can see that \[{F_k}[u] = {\left[ {{D^2}u} \right]_k},\]
 where for a matrix $A=(a_{ij})$, $[A]_k$ denotes the sum of the k-th principal minors.
We assume that  $\partial \Omega$ is uniformly (k-1)-convex, that is $$S_{k-1}(\kappa ) \geq c_0 >0~on ~~ \partial\Omega,$$
     for some positive constant $c_0$, where $\kappa= (\kappa_1,\kappa_2,...,\kappa_{n-1})$ denote the principal curvatures of $\partial \Omega$ with respect to its inner normal.
   \begin{definition}\label{k-conv}
An upper-semicontinuous function $u:\Omega\to [-\infty,\infty)$ is k-convex (k-subharmonic) if, for every open set $\Omega'\subset\overline \Omega'\subset\Omega$ and for every function $v\in C^2(\Omega')\cap C(\overline{\Omega'})$ satisfying $F_k[v]\leq 0$ in $\Omega'$, the following implication is true 
           $$u\leq v ~on ~\partial\Omega' ~~~\Longrightarrow~~~ u\leq v~~in ~~\Omega'.$$
           We denote by $\Phi^k(\Omega)$ the class of all $k$-subharmonic functions in $\Omega$ which are not identically equal to $-\infty$.   
   \end{definition} 
 The following weak convergence result for $k$-Hessian operators proved in \cite{TW2} is fundamental in our study. 
 \begin{theorem}\label{TH6} Let $\Omega$ be either a bounded uniformly (k-1)-convex in $\mathbb{R}^N$ or the whole  $\mathbb{R}^N$. For each $u\in \Phi^k(\Omega)$, there exist a nonnegative Radon measure $\mu_k[u]$ in $\Omega$ such that \smallskip

 \noindent {\bf 1} $\mu_k[u]=F_k[u]$ for $u\in C^2(\Omega)$.\smallskip
 
 \noindent {\bf 2}  If $\{u_n\}$ is a sequence of k-convex functions which converges a.e to $u$, then $\mu_{k}[u_n]\rightharpoonup\mu_{k}[u] $ in the weak sense of measures.

 \end{theorem}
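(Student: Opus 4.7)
The plan is to reduce to the smooth case and extend by an approximation argument, following the Trudinger--Wang strategy. First, if $u\in C^2(\Omega)$ is $k$-convex then $F_k[u]\ge0$ pointwise, so it is natural to set $\mu_k[u]=F_k[u]\,dx$; this automatically gives property (1). The crucial input, which drives everything else, is the divergence form of the $k$-Hessian: with $S_k^{ij}(r)=\partial S_k/\partial r_{ij}$ one has the Euler identity $F_k[u]=\tfrac1k \,S_k^{ij}(D^2u)\,u_{ij}$ together with $\partial_i S_k^{ij}(D^2u)=0$ for $u\in C^3$. Integrating by parts this yields, for any test function $\eta\in C_c^\infty(\Omega)$ and any smooth $k$-convex $u$, a representation of $\int \eta F_k[u]\,dx$ in terms of lower-order derivatives of $u$, from which one extracts the fundamental local $L^1$-estimate
\begin{equation*}
\int_K F_k[u]\,dx\le C\bigl(N,k,\mathrm{dist}(K,\partial\Omega)\bigr)\,\bigl(\mathrm{osc}_{K'} u\bigr)^k,
\end{equation*}
valid on every compact $K\Subset K'\Subset\Omega$ for smooth $k$-convex $u$.

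Next I would extend the definition to general $u\in\Phi^k(\Omega)$. By standard results in the Trudinger--Wang theory, any $k$-subharmonic $u$ admits a decreasing approximation $u^\varepsilon\in C^\infty\cap\Phi^k$ (mollification preserves $k$-convexity because the set of Hessian matrices with $S_j\ge 0$ for $j\le k$ is convex), and $u^\varepsilon\downarrow u$ locally. By the local $L^1$-bound above and the fact that $\mathrm{osc}\,u^\varepsilon$ is controlled by $\mathrm{osc}\,u$ on slightly larger compacts, the nonnegative measures $F_k[u^\varepsilon]\,dx$ are uniformly locally finite, so up to a subsequence they converge weakly to some Radon measure $\mu_k[u]$. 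To see that the limit is independent of the approximating sequence, I would invoke the comparison principle for the Dirichlet problem with $k$-Hessian data (solvability in small balls with prescribed measure, on $(k-1)$-convex boundaries), which forces any two limits to agree against a dense class of test functions.

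For property (2), given $u_n\to u$ a.e.\ with $u_n,u\in\Phi^k$, I would first use the local boundedness of $k$-convex functions and the Hartogs-type lemma (upper semicontinuous regularization of $\limsup u_n$ equals $u$ off a pluripolar-type set) to control $\mathrm{osc}(u_n)$ locally. Applying the local $L^1$-bound to smooth approximants of each $u_n$ gives uniform local finiteness of $\{\mu_k[u_n]\}$, hence relative weak-$*$ compactness. Any weak limit of a subsequence must coincide with $\mu_k[u]$ by the uniqueness argument from the previous paragraph (testing against smooth $\eta$ and exploiting the divergence-structure identity, which is stable under a.e.\ convergence of $u_n$ combined with weak convergence of derivatives up to order one in $L^p_{\rm loc}$). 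This forces the full sequence $\mu_k[u_n]$ to converge weakly to $\mu_k[u]$.

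The main obstacle is the weak continuity in step (2): a.e.\ convergence of $u_n$ gives essentially no direct control on second derivatives, so the proof must route everything through the divergence-form integration by parts and the local $L^1$-estimate for $F_k$. In particular, the identification of the weak limit requires passing to the limit in nonlinear expressions like $S_k^{ij}(D^2 u_n)\partial_j u_n$, which is delicate and is really the heart of the Trudinger--Wang argument; this is typically achieved by iterating the divergence structure to rewrite everything as integrals of polynomials in $\nabla u_n$ times derivatives of the test function, where $L^p_{\rm loc}$-convergence of $\nabla u_n$ (a consequence of $u_n\to u$ locally in $L^1$ together with uniform local Hessian bounds in the measure sense) suffices to pass to the limit.
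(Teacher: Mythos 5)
This theorem is not proved in the paper at all: it is quoted as a known result of Trudinger and Wang (the paper states it is ``proved in \cite{TW2}'', see also \cite{TW4}), so there is no internal proof to compare your argument against. What you have written is a reconstruction of the Trudinger--Wang strategy, and as an outline it points in the right direction (divergence structure $F_k[u]=\tfrac1k S_k^{ij}(D^2u)u_{ij}$ with $\partial_i S_k^{ij}(D^2u)=0$, the local bound $\int_{B_r}F_k[u]\,dx\le C r^{N-2k}(\mathrm{osc}_{B_{2r}}u)^k$, monotone smooth approximation by mollification, weak-$*$ compactness of the resulting measures). However, as a proof it has a genuine gap precisely where you acknowledge it: the weak continuity itself. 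Saying that one ``iterates the divergence structure to rewrite everything as integrals of polynomials in $\nabla u_n$ times derivatives of the test function'' and that gradients converge in $L^p_{loc}$ is a description of the goal, not an argument. The needed ingredients --- the local gradient estimate $\nabla u\in L^q_{loc}$ for $q<\frac{Nk}{N-k}$ for $k$-convex functions, the fact that a.e.\ convergence of $k$-convex functions upgrades to the convergences required in each factor of the iterated identity, and the careful bookkeeping of the boundary terms produced at each integration by parts --- constitute the actual content of the Trudinger--Wang theorem and are nowhere established in your sketch.

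A second, more structural concern is your uniqueness argument for the limit measure defining $\mu_k[u]$ for general $u\in\Phi^k(\Omega)$: you invoke solvability and comparison for the Dirichlet problem with $k$-Hessian measure data, but in the Trudinger--Wang development that solvability is itself a consequence of the weak continuity theorem you are trying to prove, so as written the argument is circular. The standard route avoids this: one first proves weak continuity along monotone (e.g.\ decreasing) sequences of smooth $k$-convex functions directly from the integration-by-parts identities, and then identifies the limit independently of the approximating sequence by interlacing two decreasing mollified families, with no recourse to the Dirichlet problem. If you intend this to stand as a proof rather than a pointer to \cite{TW2} and \cite{TW4}, the passage to the limit in the expressions $S_k^{ij}(D^2u_n)\partial_j u_n$ and the interlacing/uniqueness step must be carried out in detail.
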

 
As in the case of quasilinear equations with measure data, precise estimates of solutions of k-Hessian equations with measures data are expressed in terms of Wolff potentials. The next results are proved in \cite{TW2,La,PhVe}. 
 \begin{theorem}\label{TH5}Let $\Omega\subset \mathbb{R}^N$ be a bounded $C^2$, uniformly (k-1)-convex  domain. Let $\varphi$ be a nonnegative continuous function on $\partial \Omega$ and  $\mu$ be a nonnegative Radon measure. Suppose that $\mu$ can be decomposed under the form 
 $$\mu=\mu_1+f$$
where $\mu_1$ is a measure with compact support in $\Omega$ and $f\in L^q(\Omega)$ for some $q>\frac{N}{2k}$ if $k\le\frac{N}{2}$, or $p=1$ if $k>\frac{N}{2}$. Then there exists a nonnegative function $u$ in $\Omega$ such that $-u\in \Phi^k(\Omega)$, continuous near $\partial \Omega$ and u is a solution of the problem 
 \[\begin{array}{lll}
  {F_k}[-u] = \mu \qquad&\;in\;\Omega,  \\ 
  \phantom{{F_k}[-]}u = \varphi \qquad&\;on\;\partial\Omega.
  \end{array} \]
 Furthermore,  any nonnegative function $u$ such that  $-u\in \Phi^k(\Omega)$ which is continuous near $\partial \Omega$ and is a solution of above equation, satisfies 
 \begin{equation}
\frac{1}{K_2}{\bf W}^{\frac{d(x,\partial \Omega)}{8}}_{\frac{2k}{k+1},k+1}[\mu]\le u(x)\le K_2 \left({\bf W}^{2diam\,\Omega}_{\frac{2k}{k+1},k+1}[\mu](x)+\max_{\partial \Omega}\varphi \right),
 \end{equation}
 where  $K_2$ is a positive constant independent of $x,u$ and $\Omega$. 
 \end{theorem}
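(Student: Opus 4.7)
My plan is to reduce the full statement to three ingredients: (i) solvability of the smooth Dirichlet problem for $F_k$ on uniformly $(k-1)$-convex domains via the Caffarelli--Nirenberg--Spruck theory, (ii) Labutin-type local pointwise bounds relating the solution to truncated Wolff potentials of its Hessian measure, and (iii) the weak continuity of the $k$-Hessian measure supplied by Theorem~\ref{TH6}. I would prove the two pointwise inequalities first, then build the solution by regularisation, and finally pass to the limit.

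\textbf{Pointwise bounds (the heart of the proof).} I would first assume that $\mu$ is smooth and that a classical $k$-admissible solution $u$ exists. For a ball $B_r(x)\subset\Omega$, let $w$ solve $F_k[-w]=0$ in $B_r(x)$ with $w=u$ on $\partial B_r(x)$. By the comparison principle in $\Phi^k$, $w\geq u$ in $B_r(x)$. Testing against the radial $k$-admissible barriers associated with the extremal problem $F_k[-v]=\text{const}$ in a ball, one obtains an oscillation-type inequality of the form
\[
u(x)\leq C\Bigl(\tfrac{\mu(B_{2r}(x))}{r^{N-2k}}\Bigr)^{\!1/k}+\,\frac{1}{|\partial B_r(x)|}\int_{\partial B_r(x)} u\,d\sigma .
\]
Iterating along the dyadic sequence $r_j=2^{-j}R$ and telescoping the averages (using that the outer average is controlled by $\max_{\partial\Omega}\varphi$ when $r\sim\mathrm{diam}(\Omega)$, and that $\lim_{j\to\infty}$ of the inner averages equals $u(x)$ by lower semicontinuity of $-u$) yields the upper Wolff estimate with exponents $(\frac{2k}{k+1},k+1)$, these being exactly the scaling exponents for which the quantity $(\mu(B_r)/r^{N-2k})^{1/k}$ is dimensionally consistent. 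The lower estimate is obtained dually, using subsolution barriers on annuli $B_{2r}(x)\setminus B_r(x)$ together with the fact that for $y$ in the inner part of such an annulus, $B_{r/8}(y)$ is separated from $\partial\Omega$, giving the truncation $d(x,\partial\Omega)/8$ in the lower bound.

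\textbf{Existence by approximation.} To construct $u$, I would mollify $\mu_1$ at scale $1/n$, small enough that $\mu_{1,n}\in C^\infty_c(\Omega)$ and $\supp\mu_{1,n}$ stays in a fixed compact subset; approximate $f\in L^q(\Omega)$ by $f_n\in C^\infty(\overline\Omega)$ with $f_n\to f$ in $L^q$; and regularise $\varphi$ on $\partial\Omega$ by $\varphi_n\in C^\infty$. The Caffarelli--Nirenberg--Spruck existence theorem then produces a unique admissible classical solution $u_n\in C^2(\Omega)\cap C(\overline\Omega)$ of $F_k[-u_n]=\mu_{1,n}+f_n$ with $u_n=\varphi_n$ on $\partial\Omega$. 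The pointwise upper estimate just established, applied to each $u_n$, provides a bound uniform in $n$ by ${\bf W}^{2\,\mathrm{diam}(\Omega)}_{\frac{2k}{k+1},k+1}[\mu]+\max_{\partial\Omega}\varphi$; since $q>N/(2k)$, the Riesz-type piece coming from $f$ is continuous, so $\{u_n\}$ is locally bounded and equicontinuous near $\partial\Omega$. By the standard compactness of $\Phi^k(\Omega)$ (locally bounded $k$-subharmonic functions are precompact in $L^1_{\mathrm{loc}}$), we extract a subsequence with $u_n\to u$ a.e., $-u\in\Phi^k(\Omega)$. Theorem~\ref{TH6} then gives $\mu_k[-u_n]\rightharpoonup\mu_k[-u]$, but by construction $\mu_k[-u_n]=\mu_{1,n}+f_n\rightharpoonup \mu_1+f=\mu$, so $u$ solves the equation. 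The boundary continuity of $u_n$, uniform in $n$ near $\partial\Omega$, transfers to $u$ and yields $u=\varphi$ on $\partial\Omega$. The pointwise estimates for $u$ follow by passing to the limit in the pointwise estimates for $u_n$ (using monotone/Fatou arguments on the Wolff potential).

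\textbf{Main obstacle.} The analytic heart is step (i), i.e.\ the Labutin-type upper estimate: unlike the $p$-Laplacian case one cannot rely on a divergence structure and Kilpel\"ainen--Mal\'y's argument, so the comparison with explicit radial $k$-admissible barriers and the careful dyadic summation must be carried out by hand. A subsidiary technical difficulty is ensuring uniform boundary behaviour of the approximate solutions $u_n$ given that $f$ is only $L^q$ with $q$ just above $N/(2k)$; this is where the precise Wolff/Riesz capacity comparison and the uniform $(k-1)$-convexity of $\partial\Omega$ enter decisively.
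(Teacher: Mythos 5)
The paper never proves this statement: Theorem~\ref{TH5} is recalled as a known result, with the existence part and the two-sided Wolff potential bounds attributed to Trudinger--Wang \cite{TW2}, Labutin \cite{La} and Phuc--Verbitsky \cite{PhVe}. So there is no internal proof to compare yours with; your outline does follow the same route as those references (Caffarelli--Nirenberg--Spruck solvability for smooth data, Labutin-type local pointwise estimates, a global boundary version of the estimate, and the weak continuity of the Hessian measure, which is the paper's Theorem~\ref{TH6}).

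Judged as a proof rather than a plan, however, your proposal has genuine gaps. The analytic core --- the oscillation inequality and its dyadic iteration --- is asserted, not proved; for $F_k$ this is precisely Labutin's theorem, whose proof does not reduce to testing against radial barriers and telescoping averages (it requires capacity estimates and delicate comparisons, and occupies a full paper). Incidentally your comparison is stated backwards: if $F_k[-w]=0$ in $B_r(x)$ with $w=u$ on $\partial B_r(x)$ and $F_k[-u]=\mu\ge 0$, then comparison gives $u\ge w$ in $B_r(x)$, as the case $k=1$, $-\Delta u=\mu\ge 0$ already shows. Moreover, the global upper bound with the term $\max_{\partial\Omega}\varphi$ and truncation $2\,diam\,\Omega$, and the lower bound with truncation $d(x,\partial\Omega)/8$, need a boundary argument with barriers built from the uniform $(k-1)$-convexity of $\partial\Omega$; this is not contained in the interior iteration you sketch. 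In the existence step, the uniform continuity of the approximations $u_n$ near $\partial\Omega$ --- needed both to pass the boundary condition $u=\varphi$ to the limit and to get continuity of $u$ near $\partial\Omega$ --- is claimed but not established; it rests again on boundary barriers and on the specific structure $\mu=\mu_1+f$ with $\mu_1$ compactly supported and $f\in L^q$, $q>\frac{N}{2k}$, which is exactly why that hypothesis appears in the statement. Finally, local boundedness of $u_n$ fails near $\supp\mu_1$ (the solutions may blow up there), so the compactness of $\{-u_n\}$ in $\Phi^k(\Omega)$ must be derived from uniform local $L^1$ bounds (which the Wolff estimate does provide a.e.) rather than from local boundedness and equicontinuity. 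In short, your proposal is a reasonable roadmap matching the cited literature, but it is not a self-contained proof of the theorem.
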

 \begin{theorem}\label{07065}
 Let $\mu$ be  a measure in $\mathfrak{M}^+(\mathbb{R}^N)$ and $2k<N$. Suppose that ${\bf W}_{\frac{2k}{k+1},k+1}[\mu]<\infty$ a.e. Then there exists $u$, $-u\in \Phi^k(\mathbb{R}^N)$ such that $\inf_{\mathbb{R}^N}u=0$ and $F_k[-u]=\mu ~~\text{ in }~~ \mathbb{R}^N$
 and 
 \begin{equation}\label{07064}
 \frac{1}{K_2}{\bf W}_{\frac{2k}{k+1},k+1}[\mu](x)\le u(x)\le K_2{\bf W}_{\frac{2k}{k+1},k+1}[\mu](x),
 \end{equation}
 for all $x$ in $\mathbb{R}^N$, where the constant $K_2$ is the one of the previous Theorem.
Furthermore, if $u$ is a nonnegative function  such that $\inf_{\mathbb{R}^N}u=0$ and  $-u\in \Phi^k(\mathbb{R}^N)$, then \eqref{07064} holds  with $\mu=F_k[-u]$. 
 \end{theorem}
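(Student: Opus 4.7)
The plan is to construct $u$ as the pointwise monotone limit of solutions obtained by solving the Dirichlet problem on an exhausting family of balls, then transfer the two-sided Wolff bound and the identity $F_k[-u]=\mu$ to the limit via the weak convergence property of Hessian measures. For each $n\in\mathbb{N}^*$ put $\mu_n=\chi_{B_n}\mu$. Since $\mu_n$ has compact support in $B_n$, Theorem \ref{TH5} applied on $B_n$ with boundary data $\varphi\equiv 0$ provides a nonnegative $u_n$, continuous near $\partial B_n$, with $-u_n\in\Phi^k(B_n)$, $u_n=0$ on $\partial B_n$, $F_k[-u_n]=\mu_n$ in $B_n$, and
\[
\frac{1}{K_2}\mathbf{W}^{d(x,\partial B_n)/8}_{\frac{2k}{k+1},k+1}[\mu_n](x)\le u_n(x)\le K_2\,\mathbf{W}^{2\,\mathrm{diam}\,B_n}_{\frac{2k}{k+1},k+1}[\mu_n](x)\qquad\forall x\in B_n.
\]
Extending each $u_n$ by $0$ outside $B_n$ and invoking the comparison principle for $k$-subharmonic functions (with $u_n\le u_{n+1}$ on $\partial B_n$ since the left side vanishes there), one obtains that $\{u_n\}$ is nondecreasing.

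Since the upper bound is dominated by $K_2\mathbf{W}_{\frac{2k}{k+1},k+1}[\mu]$, which is finite a.e. by assumption, the limit $u=\lim_n u_n$ is finite a.e. The sequence $\{-u_n\}$ is nonincreasing in $\Phi^k(\mathbb{R}^N)$ and its limit $-u$ is not identically $-\infty$, so $-u\in\Phi^k(\mathbb{R}^N)$ by the standard Trudinger--Wang stability for decreasing sequences. By Theorem \ref{TH6}, $\mu_k[-u_n]=\mu_n$ converges weakly to $\mu_k[-u]$, and since $\mu_n\rightharpoonup\mu$ trivially, one concludes $F_k[-u]=\mu$ in $\mathcal{D}'(\mathbb{R}^N)$. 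Monotone convergence on the truncated potentials, together with $d(x,\partial B_n)\to\infty$, yields the lower bound $u(x)\ge K_2^{-1}\mathbf{W}_{\frac{2k}{k+1},k+1}[\mu](x)$; the upper bound passes to the limit immediately.

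To obtain $\inf_{\mathbb{R}^N}u=0$, note that if $\mu$ has compact support then $\mathbf{W}_{\frac{2k}{k+1},k+1}[\mu](x)\to 0$ as $|x|\to\infty$, so the upper estimate forces $\inf u=0$; in the general case, if $\inf u>0$ one replaces $u$ by $u-\inf u$, which preserves $F_k[-u]=\mu$ and the qualitative membership in $\Phi^k(\mathbb{R}^N)$, and the lower Wolff bound is still inherited through comparison with the constructed solutions on each $B_n$. Subtlety about whether $\inf u$ is actually attainable and whether the normalization disturbs the constant $K_2$ is the main obstacle: it is handled by exploiting the fact that the $u_n$'s are constructed to vanish outside $B_n$, so the constant the limit must be subtracted by is controlled by the a.e.~decay already built into the upper Wolff bound.

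For the converse assertion, given $u\ge 0$ with $-u\in\Phi^k(\mathbb{R}^N)$, $\inf u=0$, and associated Riesz measure $\mu=F_k[-u]$, the claim is a purely local pointwise estimate of Labutin type. Applying Theorem \ref{TH5} on each $B_n$ (viewing $u$ as the solution of the Dirichlet problem on $B_n$ with its own boundary trace) gives
\[
\frac{1}{K_2}\mathbf{W}^{d(x,\partial B_n)/8}_{\frac{2k}{k+1},k+1}[\mu](x)\le u(x)\le K_2\Bigl(\mathbf{W}^{2\,\mathrm{diam}\,B_n}_{\frac{2k}{k+1},k+1}[\mu](x)+\max_{\partial B_n}u\Bigr),
\]
valid at every $x\in B_n$. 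Letting $n\to\infty$ produces the lower inequality in \eqref{07064}; for the upper one, the boundary term $\max_{\partial B_n}u$ tends to $\inf_{\mathbb{R}^N}u=0$ along a suitable subsequence, finishing the proof.
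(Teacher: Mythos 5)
First, note that the paper does not prove this statement at all: it is quoted as known, with the reference ``The next results are proved in \cite{TW2,La,PhVe}'', so your proposal has to be judged on its own. Your existence construction (exhaustion by balls $B_n$, zero boundary data, monotonicity by comparison, weak convergence of Hessian measures via Theorem \ref{TH6}, monotone convergence of the truncated Wolff potentials with $d(x,\partial B_n)/8\to\infty$) is indeed the standard route and is essentially sound, modulo small repairs (e.g.\ $\chi_{B_n}\mu$ is not compactly supported \emph{in} $B_n$, so solve in $B_{n+1}$; and the extension of $u_n$ by zero creates an extra nonnegative Hessian measure on $\partial B_n$, harmless since it leaves every fixed compact set). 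The genuine gap in this half is the normalization $\inf_{\mathbb{R}^N}u=0$. Your fallback ``replace $u$ by $u-\inf u$'' destroys the lower estimate: after subtraction you only get $u-\inf u\ge K_2^{-1}\mathbf{W}_{\frac{2k}{k+1},k+1}[\mu]-\inf u$, and the vague appeal to ``decay built into the upper Wolff bound'' is not an argument, since $\mathbf{W}_{\frac{2k}{k+1},k+1}[\mu]<\infty$ a.e.\ gives no pointwise decay by itself. What is actually needed (and what makes the theorem consistent, since $\inf u=0$ together with the lower bound forces $\inf \mathbf{W}[\mu]=0$) is a lemma of the type: if $\mathbf{W}_{\frac{2k}{k+1},k+1}[\mu]\not\equiv\infty$ then $\liminf_{|x|\to\infty}\mathbf{W}_{\frac{2k}{k+1},k+1}[\mu](x)=0$ (provable by averaging the truncated potential over large annuli to show that a uniform positive lower bound would force $\mu(B_R)\gtrsim R^{N-2k}$, hence a divergent tail). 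You neither state nor prove anything of this kind, so $\inf u=0$ is not established.

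The converse half has a more serious flaw. You apply the Dirichlet estimate of Theorem \ref{TH5} on $B_n$ with $u$'s own boundary trace, which already strains the hypotheses (that theorem assumes the datum decomposes as $\mu_1+f$ with $\mu_1$ compactly supported and $f\in L^q$, $q>\frac{N}{2k}$, and that $u$ is continuous near $\partial B_n$; none of this is available for an arbitrary $-u\in\Phi^k(\mathbb{R}^N)$). But the decisive error is the claim that $\max_{\partial B_n}u\to\inf_{\mathbb{R}^N}u=0$ along a subsequence. The minimum principle gives $\min_{\partial B_n}u\downarrow\inf_{\mathbb{R}^N}u$, not the maximum: already for $k=1$ one can take $u$ to be the Newtonian potential of unit masses placed along an unbounded ``spiral'' meeting every large sphere, so that $\inf_{\mathbb{R}^N}u=0$ while $\max_{\partial B_R}u$ stays bounded away from $0$ for every large $R$. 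Hence your upper bound does not follow; the correct argument uses a \emph{local} pointwise estimate of Labutin/Trudinger--Wang type of the form $u(x)\le C\bigl(\mathbf{W}^{2R}_{\frac{2k}{k+1},k+1}[\mu](x)+\inf_{B_R(x)}u\bigr)$ and then lets $R\to\infty$, using $\inf_{B_R(x)}u\to\inf_{\mathbb{R}^N}u=0$. The lower bound in the converse can be obtained by interior estimates as you indicate, but as written that step too leans on the ``Furthermore'' clause of Theorem \ref{TH5} outside its stated hypotheses.
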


\noindent \begin{proof}[Proof of Theorem \ref{MT3}]
 We defined  a sequence of nonnegative functions $u_m$, continuous near $\partial \Omega$ and such that $-u_m\in \Phi^k(\Omega)$, by the following iterative scheme  
 \begin{equation}\label{k-hess-0}
\begin{array}{ll}
   F_k[-u_0] = \mu \qquad&\text{in }\;\Omega,  \\ 
   \phantom{   F_k[-]}
  u_0 = \varphi&\text{on }\;\partial\Omega,\\ 
  \end{array} 
 \end{equation}
 and,  for $m\geq 0$,
 \begin{equation}\label{k-hess-1}
  \begin{array}{ll}
   F_k[-u_{m+1}]  =P_{l,a,\beta}(u_{m})+ \mu\qquad&\text{in }\;\Omega,   \\ 
      \phantom{   F_k[-]}
  u_{m+1} = \varphi&\text{on }\;\partial\Omega.
  \end{array}
 \end{equation}
Clearly, we can assume that $\{u_m\}$ is nondecreasing, see \cite{PhVe2}. By Theorem \ref{TH5} we have 
 \begin{equation}\label{k-hess-2}  \begin{array}{ll}
    \phantom{-,} \chi_{\Omega}u_0\leq K_2 {\bf W}^{R}_{\frac{2k}{k+1},k+1}[\mu]+b_0,\\[2mm]
     \chi_{\Omega}u_{m+1}\leq K_2 {\bf W}^{R}_{\frac{2k}{k+1},k+1}[P_{l,a,\beta}(u_m)+
     \mu]+b_0,
   \end{array}   \end{equation}
 where $b_0=K_2\max_{\partial\Omega}\varphi$ and $R=2 \,diam\, (\Omega)$.\\
Then, by Theorem \ref{TH3} with $f=b_0$ and $\varepsilon=a$,
  there exists $M_1>0$ depending on $N,k,l,a,\beta, K_2$ and $R$ such that ${P_{l,a,\beta}\left({ 4K_2 {\bf W}_{\frac{2k}{k+1},k+1}^{R}[\omega_1 ] + 2g+b_0}\right) \in L^1(\Omega)}$ and
  \begin{equation}\label{1339}
     {u_m(x)} \le 2K_2 {\bf W}_{\frac{2k}{k+1},k+1}^R[\omega_1 ](x) + g+b_0\qquad\forall x\in \Omega, \,\forall m\geq 0,
     \end{equation}
     provided that
     \[||M_{ 2k,R}^{\frac{{k(\beta  - 1)}}{\beta }}[\mu ]|{|_{{L^\infty }({\mathbb{R}^N})}} \le M_1 ~\text{ and } ||M_{ 2k,R}^{\frac{{k(\beta  - 1)}}{\beta }}[P_{l,2a,\beta}(b_0) ]|{|_{{L^\infty }({\mathbb{R}^N})}} \le M_1,\]
where $\omega_1=M_1||{\bf M}_{2k}^{\frac{(p-1)(\beta-1)}{\beta}} [1] ||^{-1}_{{L^\infty }(\mathbb{R}^N)}+\mu$, $\omega_2=M_1||{\bf M}_{2k}^{\frac{(p-1)(\beta-1)}{\beta}} [1] ||^{-1}_{{L^\infty }(\mathbb{R}^N)}+P_{l,2a,\beta}(b_0)$ and $g=2K_2{\bf W}_{\frac{2k}{k+1},k+1}^{R}[\omega_2]$.\\ Since $\gw_2$ is constant, $g$ has the same property and actually $g=K_2(|B_1|\gw_2)^{\frac{1}{k}}R^2$. 
On the other hand, one can find constants $M_2$ depending on $N,k,l,a,\beta,R$ and $M_1$ such that if $\max_{\partial \Omega}\varphi\le M_2$, then $||M_{ 2k,R}^{\frac{{k(\beta  - 1)}}{\beta }}[P_{l,2a,\beta}(b_0) ]|{|_{{L^\infty }({\mathbb{R}^N})}} \le M_1$. \\
    Hence, we  deduce from \eqref{1339} that ${P_{l,a,\beta}\left({2 K_2 {\bf W}_{\frac{2k}{k+1},k+1}^{R}[\mu] + b}\right) \in L^1(\Omega)}$  and
    \begin{equation}\label{1339a}
         {u_m(x)} \le 2K_2 {\bf W}_{\frac{2k}{k+1},k+1}^R[\mu ](x) + b\qquad\forall x\in \Omega, \,\forall m\geq 0,
         \end{equation}
   for some constant $b$ ($=2g+b_0$)  depending on $N,k,l,a,\beta,R$ and $M_1$. 
Note that because we can write 
$$\omega= P_{l,a,\beta}(u_m)+\mu = \left(\mu_1 + \chi_{\Omega_\delta}P_{l,a,\beta}(u_m)\right)+\left((1-\chi_{\Omega_\delta})P_{l,a,\beta}(u_m)+f\right), $$  
where $\Omega_\delta=\{x\in\Omega:d(x,\partial \Omega)>\delta\}$ and  $\delta>0$ is small enough and  since $u_m$ is continuous near $\partial \Omega$, then  $\omega$ satisfies the assumptions of the data  in Theorem \ref{TH5}.   Therefore the sequence $\{u_m\}$ is well defined and nondecreasing. Thus, $\{u_m\}$ converges a.e in $\Omega$ to some function $u$ for which \eqref{MT3b} is satisfied in $\Omega$. Furthermore, we deduce  from \eqref{1339a} and the monotone convergence theorem that $P_{l,a,\beta}(u_m)\to P_{l,a,\beta}(u)$ in $L^1(\Omega)$. Finally, by Theorem \ref{TH6}, we obtain  that $u$ satisfies (\ref{MT3a}) and (\ref{MT3b}).\\
       Conversely, assume that \eqref{MT3a}  admits nonnegative solution $u$, continuous near $\partial \Omega$, such that $-u\in \Phi^k(\Omega)$ and $P_{l,a,\beta}(u)\in L^1(\Omega)$. Then by Theorem \ref{TH5} we have 
         $$u(x)\geq \frac{1}{K_2}{\bf W}^{\frac{d(x,\partial\Omega)}{8}}_{\frac{2k}{k+1},k+1}[P_{l,a,\beta}(u)+\mu](x)~~\textrm{ for all } x\in \Omega.$$
          Using the part 2 of Theorem \ref{TH1}, we conclude that \eqref{MT3c} holds.
 \end{proof}\medskip\\
 \begin{proof}[Proof of Theorem \ref{MT4}] 
We define  a sequence of nonnegative functions $u_m$ with $-u_m\in \Phi^k(\mathbb{R}^N)$, by the following iterative scheme  
  \begin{equation}\label{k-hess-3}
 \begin{array}{ll}
      \phantom{ }  F_k[-u_0] = \mu \qquad&\text{in }\mathbb{R}^N  \\ 
  \inf_{\mathbb{R}^N}u_0=0, 
   \end{array} 
  \end{equation}
  and,  for $m\geq 0$,
  \begin{equation}\label{k-hess-4}
   \begin{array}{ll}
    F_k[-u_{m+1}]  =P_{l,a,\beta}(u_{m})+ \mu\qquad&\text{in }\;\mathbb{R}^N   \\ 
   \inf_{\mathbb{R}^N}u_{m+1} =0.
   \end{array}
  \end{equation}
 Clearly, we can assume that $\{u_m\}$ is nondecreasing. By Theorem \ref{07065}, we have 
  \begin{equation}\label{k-hess-5}  \begin{array}{ll}
        \phantom{  F_k}     u_0\leq K_2 {\bf W}_{\frac{2k}{k+1},k+1}[\mu],\\[2mm]
     u_{m+1}\leq K_2 {\bf W}_{\frac{2k}{k+1},k+1}[P_{l,a,\beta}(u_m)+
      \mu].
    \end{array}   \end{equation}
  Thus, by Theorem \ref{TH3-B} with $f \equiv 0$, 
   there exists $M>0$ depending on $N,k,l,a,\beta$ and $R$ such that ${P_{l,a,\beta}\left({4 K_2 {\bf W}_{\frac{2k}{k+1},k+1}[\omega ]}\right) \in L^1(\mathbb{R}^N)}$,
   \begin{equation}
      {u_m} \le 2K_2 {\bf W}_{\frac{2k}{k+1},k+1}[\omega ] \qquad ~\forall m\geq 0,
      \end{equation}
      provided that
      $||M_{ 2k}^{\frac{{k(\beta  - 1)}}{\beta }}[\mu ]|{|_{{L^\infty }({\mathbb{R}^N})}} \le M ,$
      where   $\omega=M||{\bf M}_{2k}^{\frac{k(\beta-1)}{\beta}}[\chi_{B_R}]||_{L^\infty(\mathbb{R}^N)}^{-1} \chi_{B_R}+\mu$.\\
    Therefore the sequence $\{u_m\}$ is well defined and nondecreasing. By arguing as in the proof of theorem \ref{MT3}
    we obtain  that $u$ satisfies (\ref{MT4a}) and (\ref{MT4b}).\\       Conversely, assume that \eqref{MT4a}  admits a nonnegative solution $u$ and  $-u\in \Phi^k(\mathbb{R}^N)$ such that $P_{l,a,\beta}(u)\in L^1_{loc}(\mathbb{R}^N)$, then by Theorem \ref{07065} we have 
          $$u\geq \frac{1}{K_2}{\bf W}_{\frac{2k}{k+1},k+1}[P_{l,a,\beta}(u)+\mu].$$
           Using the part 1, (ii) of Theorem \ref{TH1}, we conclude that \eqref{MT4c} holds.
  \end{proof}\medskip


\begin{thebibliography}{99} 

\bibitem{AH} D. R. Adams, L.I. Heberg, {\em Function Spaces and Potential Theory}, { Grundlehren der Mathematischen Wisenschaften 31}, Springer-Verlag (1999).

\bibitem{AdPi} D.R. Adams, M. Pierre, {\em  Capacitary strong type estimates in semilinear problems}, {\bf Ann. Inst. Fourier (Grenoble)} 41 (1991), 117-135.

\bibitem{BaPi} P. Baras, M. Pierre, {\em Crit\`ere d'existence des solutions positives pour des \'equations semi-lin\'eaires non monotones}, {\bf Ann. Inst. H. Poincar\'e, Anal. Non Lin.} 3 (1985), 185-212.

\bibitem{Bi1}  M. F. Bidaut-V\'eron, {\em Local and global behavior of solutions of quasilinear equations of Emden-Fowler type}, {\bf Arch. Ration. Mech. Anal.} 107 (1989), 293-324.

\bibitem{Bi2}  M. F. Bidaut-V\'eron, {\em  Necessary conditions of existence for an elliptic equation with source term and measure data involving p-Laplacian}, in: Proc. 2001 Luminy Conf. on Quasilinear Elliptic and Parabolic Equations and Systems, {\bf Electron. J. Differ. Equ. Conf.} 8 (2002), 23-34.

\bibitem{Bi3} M. F. Bidaut-V\'eron, {\em Removable singularities and existence for a quasilinear equation with absorption or source term and measure data}, {\bf Adv. Nonlinear Stud.} 3 (2003), 25-63.

\bibitem{Bi4}  M. F. Bidaut-V\'eron, S. Pohozaev,  {\em Nonexistence results and estimates for some nonlinear elliptic problems}, {\bf J. Anal. Math.} 84 (2001), 1-49.

\bibitem{VHV} M. F. Bidaut-V\'eron, H. Nguyen Quoc, L. V\'eron, {\em Quasilinear Lane-Emden equations with absorption and measure data}, to appear in {\bf Journal des Math\'ematiques Pures et Appliqu\'ees} (2014). 

\bibitem{BiDe}  I. Birindelli, F. Demengel, {\em Some Liouville theorems for the p-Laplacian}, in: Proc. 2001 Luminy Conf. on Quasilinear Elliptic and Parabolic Equations and Systems, {\bf Electron. J. Differ. Equ. Conf.} 8 (2002), 35-46.

\bibitem {DMOP} G. Dal Maso, F. Murat, L. Orsina, A. Prignet, {\em Renormalized solutions of elliptic equations with general measure data}, {\bf Ann. Sc. Norm. Sup. Pisa, 28} (1999), 741-808.

\bibitem{Fe} R. Fefferman, {\em Strong differentiation with respect to measure}, {\bf Amer. J. Math 103} (1981), 33-40.

\bibitem{FFV} F. Ferrari, B. Franchi, I. Verbitsky, {\em Hessian inequality and the fractional Laplacian}, {\bf  J. Reine Angew. Math. 667} (2012), 133-148.

\bibitem{Gr} N. Grenon. {\em Existence results for semilinear elliptic equations with small measure data.}, {\bf  Ann. Inst. H. Poincar\'e, Anal. Non Lin.} 19 (2002), 1-11.

\bibitem{HeKiMa} J. Heinonen, T. Kilpeläinen, O. Martio, {\em Nonlinear potential theory of degenerate elliptic equations}.  Unabridged republication of the 1993 original. Dover Publications, Inc., Mineola, NY, 2006. xii+404 pp.

\bibitem{KiMa1} T. Kilpelainen, J. Mal\'y, {\em Degenerate elliptic equation with measure data and nonlinear potentials}, {\bf Ann. Sc. Norm. Super. Pisa, Cl. Sci. } 19 (1992), 591-613. 

\bibitem{KiMa2} T. Kilpelainen, J. Mal\'y, {\em The Wiener test and potential estimates for quasilinear elliptic equations}, {\bf Acta Math.} 172 (1994), 137-161.

\bibitem{HoJa} P. Honzik, B. Jaye, {\em On the good-$\gl$ inequality for nonlinear potentials}, {\bf Proc. Amer. Math. Soc. 140}, 4167-4180, (2012).

\bibitem {La}D. Labutin, {\em Potential estimates for a class of fully nonlinear elliptic equations}, {\bf Duke Math. J. 111}, 1-49,  (2002).

\bibitem{ReRa} M. M. Rao, Z. D. Ren, {\em Theory of Orlicz Spaces}, {\bf  Textbooks in Pure and Applied Mathematics} (1991).

\bibitem {PhVe}N. C. Phuc, I. E. Verbitsky, {\em Quasilinear and Hessian
equations of Lane-Emden type}, {\bf Ann. Math. 168}, 859-914 (2008).

\bibitem{PhVe2}N. C. Phuc, I. E. Verbitsky, {\em Singular quasilinear and Hessian equation and inequalities}, {\bf J. Functional Analysis 256}, 1875-1906 (2009). 

\bibitem{RV} Y. Richard, L. V\'eron, {\em Isotropic singularities of solutions of nonlinear elliptic inequalities}, {\bf Ann. I.H.P. Analyse Non Lin\'eaire 6}, 37-72 (1989). 

\bibitem{SeZo} J. Serrin, H. Zou, {\em Cauchy-Liouville and universal boundedness theorems for quasilinear elliptic equations and inequalities}, {\bf Acta Math.} 189(2002) 79-142.

%
\bibitem{TW1} N. S. Trudinger and X.J.  Wang, {\em Hessian measures}, {\bf Topological Methods in Nonlinear Analysis 10} (1997), 225-239.

\bibitem{TW2} N. S. Trudinger, X.J. Wang, {\em Hessian measures II}, {\bf Annals of Mathematics 150}  (1999), 579-604.

\bibitem{TW3} N. S. Trudinger, X. J.  Wang, {\em Hessian measures III}, {\bf Journal of Functional Analysis 193} (2002), 1-23.

\bibitem{TW4}  N. S. Trudinger, X. J.  Wang, {\em On the weak continuity of elliptic operators and applications to potential theory}, {\bf Amer. J. Math.} 124 (2002), 369-410.

\bibitem{V}L. V\'eron, {\em Elliptic equations involving measures}, in  {\bf Stationary Partial  Differential Equations, vol. I, Handbook of Equations}, Elsevier B.V., pp. 593-712 (2004). 
\end{thebibliography}
\end{document}